\def\eps{\varepsilon}
\def\e{{\rm e}}
\def\dd{{\rm d}}
\def\ddt{{\frac{\dd}{\dd t}}}
\def\uu {\boldsymbol{u}}
\def\Id{{\rm Id}}
\def\spt{{\rm spt}}
\def\RR {\mathbb{R}}
\def\NN {\mathbb{N}}
\def\PP {\mathbb{P}}
\def\MA{\mathcal{P}^\nu}
\def\EE {\mathbb{E}}
\def\TT {{\mathbb T}}
\def\MM {{\mathfrak P}}
\def\BB {{\mathcal B}}
\def\XX {{\mathcal X}}
\def\YY {{\mathcal Y}}
 \def\mmu {\boldsymbol{\mu}}
\def\grad{{\nabla}}
\def\de{{\partial}}
\newcommand{\norm}[1]{\left\lVert #1 \right\rVert}
\newcommand{\abs}[1]{\left\vert#1\right\vert}
\newcommand{\set}[1]{\left\{#1\right\}}
\newcommand{\Naturals}{\mathbb N}
\newcommand{\inp}[1]{\left\langle #1 \right\rangle}
\newcommand{\Real}{\mathbb R}
\newcommand{\Torus}{\mathbb T}
\newcommand{\Integers}{\mathbb Z}
\newcommand{\Integer}{\mathbb Z}
\def \l {\langle}
\def \r {\rangle}
\newtheorem{proposition}{Proposition}[section]
\newtheorem{theorem}[proposition]{Theorem}
\newtheorem{corollary}[proposition]{Corollary}
\newtheorem{lemma}[proposition]{Lemma}
\theoremstyle{definition}
\newtheorem{definition}[proposition]{Definition}
\newtheorem{remark}[proposition]{Remark}
\numberwithin{equation}{section}
\title[Invariant measures for passive scalars in the small noise inviscid limit]{Invariant measures for passive scalars in the small noise inviscid limit}
\author[J. Bedrossian, M. Coti Zelati, and N. Glatt-Holtz]{Jacob Bedrossian, Michele Coti Zelati, and Nathan Glatt-Holtz}
\address{Department of Mathematics, University of Maryland, College Park, MD 20742, USA}
\email{jacob@cscamm.umd.edu}
\email{micotize@umd.edu}
\address{Department of Mathematics, Virginia Tech, Blacksburg, VA 24061, USA}
\email{negh@vt.edu}
\subjclass[2010]{37L55, 37L40, 35Q35, 35P05}
\keywords{Small Noise Inviscid Limits, Invariant Measures, RAGE theorem, Relaxation Enhancing Flows}
\begin{document}

\begin{abstract}
We consider a class of invariant measures for a passive scalar $f$ driven by an incompressible velocity field
$\uu$, on a $d$-dimensional periodic domain, satisfying 
$$
\de_t f + \uu \cdot \grad f = 0, \qquad f(0)=f_0.
$$ 
The measures are obtained as limits of stochastic viscous perturbations. We prove that the span of the
$H^1$ eigenfunctions of the operator $\uu\cdot\grad$ contains the support of these measures.
We also analyze several explicit examples: when $\uu$ is a shear flow or a relaxation enhancing flow 
(a generalization of weakly mixing), we can characterize the limiting measure uniquely and compute its
covariance structure. We also consider the case of two-dimensional cellular flows, for which
further regularity properties of the functions in the support of the measure can be deduced.

The main results are proved with the use of spectral theory results, in particular the RAGE theorem, 
which are used to characterize large classes of orbits of the inviscid problem that are growing in $H^1$.  
\end{abstract}

\maketitle


\section{Introduction}
We consider the small noise inviscid limit of a class of stochastically forced linear drift-diffusion equations
of the form
\begin{equation}\label{eq:eq1}
\dd f + \left(\uu \cdot \grad f - \nu \Delta f\right) \dd t = \sqrt{\nu} \, \Psi\, \dd W_t, \qquad f(0)=f_0,
\end{equation} 
evolving on a $d$-dimensional periodic domain. 
Here $\uu$ is a fixed Lipschitz-continuous, divergence-free vector field, $\nu>0$ 
is the diffusivity parameter and  $\Psi \dd W_t$ represents a white in time, spatially colored Gaussian noise 
(see \eqref{eq:SPDEviscous} below for the full definition). 
We will always consider mean-zero initial data and forcing,
which immediately implies that 
$$
\int_{\TT^d} f(t,x) \dd x = 0
$$ 
for all $t\geq 0$.

It is a classical result \cite{DZ96} that there exists a unique Gaussian invariant measure $\mu_\nu$ 
associated to the Markov semigroup generated by \eqref{eq:eq1}. Due to the balance between 
diffusion and noise, it is possible to show that the sequence $\{\mu_\nu\}_{\nu\in (0,1]}$ 
converges, up to subsequences, to an invariant measure $\mu_0$ of the inviscid deterministic equation (in general, there might be more than one limit point) 
\begin{equation}\label{eq:eq2}
\de_t f + \uu \cdot \grad f = 0, \qquad f(0)=f_0.
\end{equation}
In this article, we characterize the support of these measures $\mu_0$ in terms of the spectral properties
of the operator $\uu\cdot \grad$. Specifically, we prove the following theorem.

\begin{theorem}\label{thm:ee}
Let $\mu_0$ be an invariant measure for \eqref{eq:eq2}, obtained in the small noise inviscid 
limit from \eqref{eq:eq1}. Define
\begin{equation}\label{eq:EE}
E= \overline{{\rm{span}}\big\{\varphi\in H^1: \uu\cdot \nabla\varphi=i \lambda \varphi, \ \lambda\in \RR \big\}}^{L^2}.
\end{equation}
Then $\mu_0(H^1\cap E)=1$. In particular, $\spt(\mu_0)\subset E$.
\end{theorem}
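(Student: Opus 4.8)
The plan is to establish the two assertions $\mu_0(H^1)=1$ and $\mu_0(E)=1$ separately; together they give $\mu_0(H^1\cap E)=1$, and since $E$ is $L^2$-closed this yields $\spt(\mu_0)\subset E$.

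First I would record the stationary enstrophy balance for the viscous problem. Applying It\^o's formula to $\norm{f}_{L^2}^2$ along a stationary solution of \eqref{eq:eq1} and using that $\uu$ is divergence-free (so that $\inp{\uu\cdot\grad f,f}=0$), the transport term drops out and the dissipation balances the It\^o correction of the noise:
$$\EE_{\mu_\nu}\norm{\grad f}_{L^2}^2=\tfrac12\,\mathrm{Tr}(\Psi\Psi^*).$$
The crucial feature is that the factors of $\nu$ cancel, so this bound is \emph{uniform} in $\nu\in(0,1]$. Since $f\mapsto\norm{\grad f}_{L^2}^2$ is lower semicontinuous along the convergence $\mu_\nu\to\mu_0$, Fatou's lemma transfers the estimate to the limit, giving $\EE_{\mu_0}\norm{f}_{H^1}^2<\infty$; in particular $\mu_0(H^1)=1$ and $f\mapsto\norm{f}_{L^2}^2$ is $\mu_0$-integrable, which will supply the dominating function below.

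The heart of the argument is $\mu_0(E)=1$, and here I would invoke the RAGE theorem. Because $\uu$ is divergence-free, $A:=-i\,\uu\cdot\grad$ is (essentially) self-adjoint on $L^2(\TT^d)$ and generates the unitary transport group $\Phi_t=e^{-t\uu\cdot\grad}$, under which $\mu_0$ is invariant. By construction $E$ is precisely the pure-point spectral subspace of $A$, so the orthogonal projection $P_{E^\perp}$ onto the continuous spectral subspace commutes with $\Phi_t$ through the functional calculus. For the finite-rank (hence compact) low-pass projection $\Pi_N$ onto Fourier modes of size at most $N$, invariance together with a time average yields, for every $T>0$,
$$\EE_{\mu_0}\norm{\Pi_N P_{E^\perp}f}_{L^2}^2=\EE_{\mu_0}\Big[\tfrac1T\int_0^T\norm{\Pi_N\,\Phi_t\,(P_{E^\perp}f)}_{L^2}^2\,\dd t\Big].$$
For each fixed $f$ the vector $P_{E^\perp}f$ lies in the continuous subspace of $A$, so the RAGE theorem forces the time average on the right to tend to $0$ as $T\to\infty$; the integrand is dominated by $\norm{f}_{L^2}^2$, which is $\mu_0$-integrable by the first step, so dominated convergence gives $\EE_{\mu_0}\norm{\Pi_N P_{E^\perp}f}_{L^2}^2=0$ for every $N$. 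Letting $N\to\infty$ and using $\Pi_N P_{E^\perp}f\to P_{E^\perp}f$ in $L^2$, monotone convergence produces $\EE_{\mu_0}\norm{P_{E^\perp}f}_{L^2}^2=0$, hence $P_{E^\perp}f=0$ for $\mu_0$-a.e. $f$, which is exactly $\mu_0(E)=1$.

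The main obstacle I anticipate lies in the spectral-theoretic setup underpinning the RAGE step rather than in the measure-theoretic bookkeeping: one must verify that $\uu\cdot\grad$ is genuinely skew-adjoint with a self-adjoint generator $A$, so that the decomposition $L^2=E\oplus E^\perp$ and the attendant functional calculus (in particular, that $P_{E^\perp}$ commutes with $\Phi_t$) are available, and that the $H^1$ eigenfunction space in \eqref{eq:EE} coincides with the pure-point subspace of $A$. Some care is also needed to pass the uniform enstrophy bound to the limit, namely in identifying the topology along which $\mu_\nu\to\mu_0$ and justifying the lower semicontinuity used in Fatou's lemma. Once these points are secured, the interchange of expectation, time averaging, and the $T\to\infty$ limit is routine.
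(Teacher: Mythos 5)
Your overall architecture (a uniform enstrophy balance passed to the limit, then a spectral argument killing the component orthogonal to $E$ using invariance) is close in spirit to the paper's, and your first step is essentially the paper's Lemma \ref{lem:H1bdd}. But the spectral step has a genuine gap: you assert that $E$ \emph{is precisely the pure-point spectral subspace} of $A=-i\,\uu\cdot\grad$, and this is false in general. $E$ is the $L^2$-closure of the span of the \emph{$H^1$} eigenfunctions only, whereas the pure-point subspace $H_{pp}$ is spanned by \emph{all} $L^2$ eigenfunctions; the inclusion $E\subset H_{pp}$ can be strict, because $\uu\cdot\grad$ may possess rough eigenfunctions in $L^2\setminus H^1$ that are not $L^2$-limits of combinations of $H^1$ eigenfunctions. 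This distinction is exactly why the paper, following \cite{CKRZ08}, distinguishes relaxation-enhancing flows from weakly mixing ones, and why it splits $E^\perp$ further into the ranges of $\widetilde{\Pi}_p$ and $\widetilde{\Pi}_c$ for the restricted operator $\widetilde{L}=L|_{E^\perp}$. Consequently $P_{E^\perp}f$ need not lie in the continuous subspace of $A$, the RAGE theorem does not apply to it, and your argument only proves the weaker statement $\mu_0(H_{pp})=1$: the measure is carried by the closed span of all $L^2$ eigenfunctions, not by $E$.

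To close the gap you must handle the part of $E^\perp$ spanned by the non-$H^1$ eigenfunctions, and there the finite $H^1$ moment has to be played against a growth mechanism, not merely used as a dominating function. This is what the paper does: Lemma \ref{lem:CKRZ2} (from \cite{CKRZ08}) shows that if $f_0\in E^\perp$ has a substantial projection under $\widetilde{\Pi}_p$, then the time-averaged $H^1$ norm of $P_{\leq N}\widetilde{S}(t)f_0$ can be made arbitrarily large --- precisely because the eigenfunctions of $\widetilde{L}$ fail to be in $H^1$. Combining this with the RAGE-type Lemma \ref{lem:CKRZ1} for the continuous part yields Theorem \ref{thm:rigidity}: the time-averaged $H^1$ norm diverges uniformly on compact subsets of $(H^1\cap E^\perp)\setminus\{0\}$. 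Invariance of $\mu_0$, the $H^1$ moment bound, inner regularity, and a Chebyshev/Fubini argument (Corollary \ref{cor:sptinviscid}) then force $\mu_0\big((E^\perp\cap H^1)\setminus\{0\}\big)=0$. Your RAGE computation reproduces the continuous-spectrum half of this scheme, but the rough-point-spectrum half is missing, and that is the part where the $H^1$ structure of the problem genuinely enters.
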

Extensions to more general linear problems are also covered by our approach (e.g. compact manifolds without boundaries and more general dissipation, such as 
fractional or inhomogeneous diffusion). 

In addition, we consider 
several concrete examples where we are able to characterize the subspace $E$  in \eqref{eq:EE} and/or to take advantage 
of properties of $\uu\cdot \grad - \nu \Delta$ to obtain a more detailed picture of the support of $\mu_0$.
The theorem above is derived as a consequence 
of a rigidity result involving a uniform time-average growth of Sobolev norms of solutions to \eqref{eq:eq2}
(see Theorem \ref{thm:rigidity}). This criterion, verified for linear problems, could  in principle be
used to deduce similar conclusions for nonlinear systems (see Remark \ref{rmk:nonlin} below).

The idea of balancing diffusion and noise by scaling with the parameter $\nu$ was introduced
 in the context of the two-dimensional Navier-Stokes 
and Euler equations in a periodic domain in \cites{K04,KS12}, and later extended to other systems \cites{K10,KS04}. 
The resulting invariant measures  
are expected to give some information about the generic, long-time dynamics of the inviscid 
systems in weak topologies, that is, taking into account the possibility of
infinite-dimensional effects such as mixing and inviscid damping \cites{BM13, GSV13}. 
However, for most nonlinear problems, for example the 2D Euler equations, we currently 
do not have much explicit information about these measures \cites{KS12, GSV13, MP14}. 

The mixing of passive scalars at high Peclet number ($\nu \rightarrow 0$ in \eqref{eq:eq1}) is a subject of enduring interest in applied mathematics (see e.g. \cites{RY83,BBG01,LTD11,AT11,S13} and the references therein), which provides a clear motivation for studying \eqref{eq:eq1}.  
Another motivation for studying passive scalars  lies in the
fact that, unlike nonlinear problems, spectral properties of $\uu\cdot\grad$ provide precise information about the 
long-time behavior of  \eqref{eq:eq2} -- an idea that can be traced back to the seminal work \cite{KvN32},
where weakly mixing flows are identified with dynamical systems with purely continuous spectra.
This additional information will allow us to prove Theorem \ref{thm:ee} and confirm the intuition that the inviscid invariant measures constructed 
in the manner described above should mostly retain information about the long-time dynamics of 
the large scales in the solutions, rather than information about the ``enstrophy'' in the small scales. 

The intuitive idea of our result is as follows: if $\uu \cdot \grad$ mixes the scalar, then small scales are 
created and then  rapidly annihilated by the dissipation on time scales faster than the natural $O(\nu^{-1})$ 
scale. This mixing-enhanced dissipation effect and related mechanisms have been studied in several 
works, e.g. \cites{BW13, BHN05, BMV14, CKRZ08, Zlatos2010, HKP14} and the references above (see also Remarks \ref{rmk:ga}-\ref{rmk:fried} for 
discussions about other types of small noise limits). 
In \cite{CKRZ08}, the authors characterized a 
special class of flows, referred to therein as \emph{relaxation enhancing}, which are precisely the flows 
with no $H^{1}$ eigenfunctions (a strictly larger class than weakly mixing flows). The authors showed 
that in this case, the deterministic problem
\begin{align}  
\de_t f + \uu \cdot \grad f -\nu\Delta f= 0, \qquad f(0)=f_0.
\end{align}
dissipates $L^2$ density faster than $O(\nu^{-1})$. 

In our context, Theorem \ref{thm:ee} shows that in the case of relaxation enhancing flows 
the only invariant measure produced by small noise limits is a point mass at zero. 
Note that this is in sharp contrast to the case of 2D Navier-Stokes to Euler limit, where it is known that the resulting inviscid 
measures cannot collapse to a single point; namely, the possibility of $\mu_0$ being a Dirac mass on
a steady state is ruled out by conservation of energy and enstrophy \cite{KS12}.   
One of the fundamental differences between the situation considered here and the 2D Navier-Stokes
equations is the lack of an $H^{-1}$ balance for the solution to \eqref{eq:eq1}. Indeed, $f$ is the analogue 
of the vorticity, and the energy balance for 2D Navier-Stokes is precisely an $H^{-1}$ balance for the vorticity.  
In the linear problem \eqref{eq:eq2}, the measures can certainly reduce to a point mass: the most 
apparent example is given by weakly mixing flows, and more generally relaxation enhancing flows, but this 
can also happen for simple shear flows depending on the structure of the noise (see Section \ref{sec:examples}).   

\subsection*{Plan of the paper}
In Section \ref{sec:inviscid} we consider the inviscid problem \eqref{eq:eq2}, and  prove a general
result on the growth of the $H^1$ norm of solutions with initial data in $E^\perp$, the orthogonal
complement of $E$ as defined in \eqref{eq:EE}. As a direct consequence, we deduce that invariant
measures for \eqref{eq:eq2} with finite $H^1$ moment are supported on $E$. 
We proceed with the construction of such measures in Section \ref{sec:section3}, via a small noise inviscid
limit of invariant measures for \eqref{eq:eq1}. We also discuss statistically stationary
solutions, and highlight the properties preserved in the limit as $\nu\to 0$. Finally, Section \ref{sec:examples}
is devoted to explicit examples of fluid flows for which the invariant measures can be better
characterized: in the case of relaxation enhancing flows and shear flows, 
the covariance operator of the (unique) Gaussian invariant measure can be computed explicitly, while
for cellular flows further regularity properties are observed to hold.

\subsection*{General notation}
 Throughout the paper, $c$ will denote a \emph{generic} positive constant,  
whose value may change from line to line in a given estimate. In the 
same spirit, $c_0,c_1,\ldots$ will denote fixed constants appearing in the course of proofs 
or estimates, which have to be referred to specifically. Given a Banach space $X$, $\BB(X)$ 
will stand for the Borel $\sigma$-algebra on $X$,
$\MM(X)$ for the set of Borelian probability measures on $X$ and $M_b(X)$ (resp. $C_b(X)$) for the space
of bounded measurable (resp. continuous) real-valued functions on $X$. We call $\RR^+=[0,\infty)$.

\subsection*{Function spaces}
Let $d\geq 2$ be a positive integer. Throughout  the article, $\TT^d=[0,2\pi]^d$ will denote the $d$-dimensional
torus and  all the real-valued functions on $\TT^d$ will be tacitly assumed to be  mean-free. Accordingly, we will not
make a distinction between homogeneous and inhomogeneous spaces. For $p\in[1,\infty)$ the Lebesgue 
norm on 
$$
L^p=\left\{\varphi:\TT^d\to\RR^d, \ \int_{\TT^d}|\varphi(x)|^p\dd x<\infty, \ \int_{\TT^d}\varphi(x)\dd x=0\right\}
$$ 
is denoted by $\|\cdot \|_{L^p}$ (with the obvious changes for $p=\infty$), $\l\cdot,\cdot\r$ stands for the 
scalar product in $L^2$, while for $s \in \RR$ 
the homogeneous Sobolev norms on $H^s=H^s(\TT^d)$ are denoted as usual 
by $\| \cdot\|_{H^s} = \| (-\Delta)^{s/2} \cdot \|_{L^2}$.
Without explicit reference, we will often make use
of the Poincar\'e inequality
\begin{equation}\label{eq:poinc}
\|\varphi \|_{L^2}\leq \frac{1}{\sqrt{\lambda_1}}\|\varphi\|_{H^1}, \qquad \varphi \in H^1,
\end{equation}
where $\lambda_1>0$ is the first eigenvalue of the Laplace operator, and whose eigenvalues
$\{\lambda_j\}_{j\in \NN}$ are well-known to form a monotonically increasing and divergent sequence.
The associated orthonormal Fourier basis will be denoted by $\{e_j\}_{j\in \NN}$, and $P_{\leq N}$
will indicate the projection onto the span of the first $N$ elements of this basis.

\section{The inviscid problem}\label{sec:inviscid}
For $x\in \TT^d$ and $t\geq 0$, we study in this section certain properties of solutions
to the inviscid transport equation
\begin{equation} \label{eq:inviscid}
\de_t f + \uu\cdot \grad f=0,\qquad f(0)=f_0.
\end{equation}
Here $\uu=\uu(x):\TT^d\to \RR^d$ is a \emph{given} Lipschitz, divergence-free, time-independent velocity vector field.
The goal here is to make precise the close relationship between the spectral properties of the operator
$\uu\cdot \grad$ and the invariant measures for the linear semigroup generated by \eqref{eq:inviscid}.

Any incompressible Lipschitz flow $\uu$ generates a volume measure-preserving transformation
$\Phi_t(x)$, defined through the differential equation
\begin{equation} \label{eq:lagr}
\ddt\Phi_t(x)=\uu(\Phi_t(x)), \qquad \Phi_0(x)=x.
\end{equation}
Existence and uniqueness of solutions to the above ODE \eqref{eq:lagr}, 
guaranteed by the assumptions on $\uu$, translate into analogous properties
for \eqref{eq:inviscid}. 
In particular, it is standard to infer that all solutions to \eqref{eq:inviscid} with $f_0 \in L^2$ satisfy 
\begin{align*}
f\in C_b(\RR; L^2)\cap W^{1,\infty}(\RR;H^{-1}) 
\end{align*}
and that \eqref{eq:inviscid} generates a one-parameter, strongly continuous, unitary group $\{S(t)\}_{t\in\RR}$ 
of linear solution operators $S(t):L^2\to L^2$ acting as
\begin{equation}
f_0\mapsto S(t)f_0=f(t),\qquad S(t)f_0(x) =  f_0(\Phi_{-t}(x)),
\end{equation}
fulfilling the group properties
\begin{align*} 
S(0)& =\Id_{L^2}, \qquad S(t+\tau)=S(t)S(\tau), \qquad S(t)^\ast= S(-t), \qquad \forall t,\tau\in \RR 
\end{align*} 
and satisfying the bound 
\begin{equation}\label{eq:globalest}
\sup_{t\in \RR} \norm{\de_t f(t)}_{H^{-1}} \leq \norm{f_0}_{L^2}\|\uu\|_{L^\infty}.
\end{equation}
Furthermore, if we assume the initial datum $f_0\in H^1$, then we have for some $c > 0$ independent of $\uu$, 
\begin{equation}
\|S(t)f_0\|_{H^1}\leq c\, \e^{\|\uu\|_{\mathrm{Lip}} \abs{t}}\|f_0\|_{H^1}, \qquad \forall t \in \RR.
\end{equation}

\subsection{Spectral properties of fluid flows} 
Inspired by the analysis of \cite{CKRZ08}, it is conceivable to expect that the set of $H^1$-eigenfunctions 
of the operator $\uu\cdot \grad$ plays an important role in the analysis of \eqref{eq:inviscid}. Specifically, this point of view will prove
very useful when \eqref{eq:inviscid} is recovered as an inviscid limit of viscous equations, 
providing information about important effects involving  anomalously fast 
dissipation. 
In this spirit, we define the closed subspace 
\begin{equation}\label{eq:smootheigen}
E= \overline{\mbox{span}\big\{\varphi\in H^1: \uu\cdot \nabla\varphi=i \lambda \varphi, \ \lambda\in \RR \big\}}^{L^2},
\end{equation}
generated by $H^1$-eigenfunctions of $\uu\cdot \grad$.
We can then write $L^2=E\oplus E^\perp$ and denote by 
\begin{equation}
\Pi_e:L^2\to E\quad \text{and}\quad \Pi_e^\perp:L^2\to E^\perp
\end{equation}
the respective orthogonal projections. It is not hard to see that the corresponding unbounded operator
\begin{equation}
L=i \uu\cdot \grad: D(L) \subset L^2\to L^2, \qquad D(L)=\left\{\varphi\in L^2: L\varphi\in L^2\right\},
\end{equation}
is closed, densely defined ($H^1 \subset D(L)$), self-adjoint, generates $\{S(t)\}_{t \in \Real}$ (i.e. $S(t) = \e^{iLt}$), and maps 
$D(L) \cap E$ to $E$, and therefore $D(L)\cap E^\perp$ to $E^\perp$
as well. Its restriction
\begin{align*} 
\widetilde{L}=L|_{E^\perp}: D(L) \cap E^\perp\to E^\perp,
\end{align*} 
is itself a closed, densely defined  and self-adjoint operator on $E^\perp$. 
Following the
approach of \cite{RS80-1}, we can therefore further split $E^\perp$ and
define the projection $\widetilde{\Pi}_p$ on the spectral subspace generated by the
pure point measure given by the spectral decomposition of $\widetilde{L}$.  Denote by $\widetilde{\Pi}_c$ 
the projection onto its orthogonal complement in $E^\perp$, that is, onto the orthogonal complement
of the eigenfunctions of $\widetilde{L}$.

More importantly, $\widetilde{L}$ has no $H^1$-eigenfunctions and is therefore, in the terminology of \cite{CKRZ08},
\emph{relaxation-enhancing} (see Definition \ref{def:Relax} below). We discuss relaxation-enhancing
flows in more detail below in Section \ref{sec:examples}. Of importance here are two results from \cite{CKRZ08}
on the behavior of time averages with respect to the linear unitary semigroup generated 
by $\widetilde{L}$, which we denote as
\begin{equation}
\widetilde{S}(t)=\e^{i\widetilde{L} t}:E^\perp\to E^\perp, \qquad \forall t\in \RR.
\end{equation}
The first one concerns the evolution of the continuous spectrum of $\widetilde{L}$, which we restate slightly for our setting. 
Its proof is based on the so-called RAGE theorem as in \cite{RS79-3}.

\begin{lemma}[\cite{CKRZ08}*{Lemma 3.2}]\label{lem:CKRZ1}
Let $\mathcal{K}\subset E^\perp$ be a compact set. For any $N,\sigma>0$, there exists $T_c=T_c(N,\sigma,\mathcal{K})$ such that 
for all $T\geq T_c$ and any $f_0\in \mathcal{K}$
\begin{equation}
\frac1T \int_0^T \|P_{\leq N}\widetilde{S}(t)\widetilde{\Pi}_cf_0\|^2_{L^2} \dd t \leq \sigma \|f_0\|^2_{L^2},
\end{equation}
where $P_{\leq N}$ is the projection onto the span of the first $N$ eigenfunctions of the Laplace operator.
\end{lemma}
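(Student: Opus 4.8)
The plan is to recognize the asserted inequality as a quantitative, uniform-over-data version of the classical RAGE theorem (in the form of \cite{RS79-3}), applied to the self-adjoint operator $\widetilde{L}$ on $E^\perp$, with the finite-rank projection $P_{\leq N}$ playing the role of the compact observable and $\widetilde{\Pi}_c$ the projection onto the continuous spectral subspace. The statement then splits into a soft, pointwise-in-$f_0$ decay coming directly from RAGE, followed by an upgrade to uniformity over the compact set $\mathcal{K}$, which is where the real work lies.

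First I would record the pointwise statement. Since $P_{\leq N}$ has finite rank, hence is compact, and $\widetilde{L}$ is self-adjoint with continuous spectral projection $\widetilde{\Pi}_c$, the RAGE theorem yields, for each fixed $f_0$,
\begin{equation}
\lim_{T\to\infty}\frac1T\int_0^T\norm{P_{\leq N}\widetilde{S}(t)\widetilde{\Pi}_c f_0}_{L^2}^2\,\dd t=0.
\end{equation}
For a self-contained derivation I would unfold this through the spectral theorem: writing $\{E_\lambda\}$ for the resolution of the identity of $\widetilde{L}$ and expanding in the basis $\{e_j\}$, one has $\norm{P_{\leq N}\widetilde{S}(t)\widetilde{\Pi}_c f_0}_{L^2}^2=\sum_{j\leq N}\abs{\inp{\widetilde{S}(t)\widetilde{\Pi}_c f_0,e_j}}^2$, and each inner product is the Fourier transform of the complex spectral measure $\dd\inp{E_\lambda\widetilde{\Pi}_c f_0,e_j}$. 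This measure is atomless, since by the definition of $\widetilde{\Pi}_c$ the scalar measure $\dd\inp{E_\lambda\widetilde{\Pi}_c f_0,\widetilde{\Pi}_c f_0}$ has no atoms; Wiener's theorem then forces the Ces\`aro average of the squared Fourier transform down to the sum of squared atomic masses, namely to zero. Summing the finitely many $j\leq N$ gives the displayed limit.

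The main obstacle is to make this decay uniform over $\mathcal{K}$. Set $G_T(f_0)=\frac1T\int_0^T\norm{P_{\leq N}\widetilde{S}(t)\widetilde{\Pi}_c f_0}_{L^2}^2\,\dd t$. Two features are crucial, and both follow from $\norm{P_{\leq N}}\leq 1$, $\norm{\widetilde{S}(t)}=1$, and $\norm{\widetilde{\Pi}_c}\leq 1$: the uniform bound $0\leq G_T(f_0)\leq\norm{f_0}_{L^2}^2$, and the uniform-in-$T$ Lipschitz estimate $\abs{G_T(f)-G_T(g)}\leq\norm{f-g}_{L^2}\big(\norm{f}_{L^2}+\norm{g}_{L^2}\big)$, obtained from the reverse triangle inequality and the factorization $a^2-b^2=(a-b)(a+b)$ applied under the integral. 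Thus $\{G_T\}_{T>0}$ is equicontinuous on the bounded set $\mathcal{K}$ and converges to $0$ pointwise. On a compact metric space, pointwise convergence of an equicontinuous family is uniform: given $\sigma$, choose a finite $\eps$-net $\{f^{(1)},\dots,f^{(m)}\}\subset\mathcal{K}$ matched to the Lipschitz constant, pick $T_c$ so large that $G_T(f^{(k)})$ is small for every $k$ and all $T\geq T_c$, and control an arbitrary $f_0\in\mathcal{K}$ through the nearest net point. This produces $\sup_{f_0\in\mathcal{K}}G_T(f_0)\to 0$ as $T\to\infty$.

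Finally I would convert the uniform decay into the stated weighted bound. In the relevant case $\mathcal{K}$ is bounded away from the origin, so $r^2:=\inf_{f_0\in\mathcal{K}}\norm{f_0}_{L^2}^2>0$, and selecting $T_c=T_c(N,\sigma,\mathcal{K})$ with $\sup_{\mathcal{K}}G_T\leq\sigma r^2$ for $T\geq T_c$ gives $G_T(f_0)\leq\sigma r^2\leq\sigma\norm{f_0}_{L^2}^2$, while the degenerate vector $f_0=0$ is trivial since $G_T(0)=0$. I expect the delicate point to be precisely this passage from the soft pointwise RAGE conclusion to the uniform-over-compacts statement: RAGE supplies no decay rate, so the argument must rest entirely on the equicontinuity estimate together with the compactness of $\mathcal{K}$, the latter being indispensable because the radial rescaling of a general compact set need not remain precompact.
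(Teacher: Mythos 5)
Your proposal is, in substance, a correct reconstruction of the proof that the paper implicitly relies on: the paper itself gives no argument for this lemma, but cites \cite{CKRZ08}*{Lemma 3.2} (noting only that it ``is based on the so-called RAGE theorem as in \cite{RS79-3}''), and the proof there follows exactly your two-step scheme --- pointwise decay of $G_T(f_0)=\frac1T\int_0^T\norm{P_{\leq N}\widetilde{S}(t)\widetilde{\Pi}_cf_0}_{L^2}^2\,\dd t$ from Wiener's theorem applied to the (atomless) complex spectral measures $\dd\inp{E_\lambda\widetilde{\Pi}_cf_0,e_j}$, $j\leq N$, upgraded to uniformity over a compact set by an $\eps$-net argument based on the contraction property of $P_{\leq N}\widetilde{S}(t)\widetilde{\Pi}_c$ and the induced Lipschitz estimate for $G_T$. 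Both steps are carried out correctly: the atomlessness via Cauchy--Schwarz against the diagonal spectral measure, the bound $G_T(f_0)\leq\norm{f_0}_{L^2}^2$, and the estimate $\abs{G_T(f)-G_T(g)}\leq\norm{f-g}_{L^2}(\norm{f}_{L^2}+\norm{g}_{L^2})$ are all sound.

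The one discrepancy is one of scope, and you flag it yourself. The lemma as stated allows an arbitrary compact $\mathcal{K}\subset E^\perp$, in particular one containing nonzero elements accumulating at $0$, e.g.\ $\mathcal{K}=\set{0}\cup\set{n^{-1}\phi_n : n\in\NN}$ with $\set{\phi_n}$ orthonormal. For such $\mathcal{K}$ your closing step fails: additive smallness $\sup_{\mathcal{K}}G_T\to 0$ does not yield the homogeneous bound $G_T(f_0)\leq\sigma\norm{f_0}_{L^2}^2$ when $\inf_{\mathcal{K}\setminus\set{0}}\norm{f_0}_{L^2}=0$, and the equicontinuity error cannot be absorbed into $\sigma\norm{f_0}_{L^2}^2$ near the origin. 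You should be aware, however, that the paper's own justification --- the remark that the general-compact-set version follows from the unit-sphere version of \cite{CKRZ08} ``by linearity'' --- has exactly the same gap, for exactly the reason you identify: the normalized set $\set{f/\norm{f}_{L^2} : f\in\mathcal{K}\setminus\set{0}}$ of such a $\mathcal{K}$ is not precompact. In the only place the lemma is used (the proof of Theorem \ref{thm:rigidity}), $\mathcal{K}$ is compact with $0\notin\mathcal{K}$, hence bounded away from the origin, so your version covers everything the paper needs. For completeness: the statement for general compact sets is nevertheless true, and compactness is in fact superfluous here because $P_{\leq N}$ has finite rank. Writing each $\inp{e_j,\widetilde{S}(t)\widetilde{\Pi}_cf_0}$ in the spectral representation of the cyclic subspace generated by $g_j=\widetilde{\Pi}_c$ applied to the $E^\perp$-component of $e_j$, the time average becomes a quadratic form with kernel $K_T(\lambda-\lambda')$, $\abs{K_T(u)}\leq\min\set{1,\,2/(T\abs{u})}$, against the atomless finite measure $\mu_{g_j}$; a Schur test, combined with the fact that atomless finite measures on $\RR$ satisfy $\sup_\lambda\mu(B(\lambda,\delta))\to0$ as $\delta\to0$, gives $\sup_{\norm{f_0}_{L^2}\leq 1}G_T(f_0)\to 0$, and the homogeneous bound then follows for all $f_0$ by scaling.
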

Contrary to \cite{CKRZ08}, we have stated the result for a general compact set rather than a compact
subset of the unit sphere of $L^2$. By linearity, it is clear that the statements are equivalent. The 
crucial point here is that the choice of the time $T_c$ depends on the compact set $\mathcal{K}$ in a uniform
way rather than pointwise on $f_0\in \mathcal{K}$.
It is also worth mentioning that the above result is true for the operator $L$ and its spectral
projection $\Pi_c$ on $L^2$ as well, 
and does not require the absence of $H^1$-eigenfunctions. 
This is in contrast with the second lemma below, for which the fact
that $\widetilde{L}$ does not have $H^1$-eigenfunctions plays an essential role. It describes
the behavior of the point spectrum under $\widetilde{S}(t)$.

\begin{lemma}\cite{CKRZ08}*{Lemma 3.3}\label{lem:CKRZ2}
Let $\mathcal{K}\subset E^\perp$ be a compact set such that $0\notin \mathcal{K}$, and define 
$$
\mathcal{K}_1=\left\{\phi\in \mathcal{K}: \|\widetilde{\Pi}_p\phi\|_{L^2}\geq  \|\phi\|_{L^2}/2\right\}.
$$
For any $B>0$ there exists $N_p(B,\mathcal{K})$ and $T_p(B,\mathcal{K})$ such that for any $N\geq N_p$,
any $T\geq T_p$ and any $f_0\in \mathcal{K}_1$
\begin{equation}
\frac1T \int_0^T \|P_{\leq N}\widetilde{S}(t) \widetilde{\Pi}_pf_0\|^2_{H^1} \dd t \geq  B \|f_0\|^2_{L^2},
\end{equation}
where $P_{\leq N}$ is the projection onto the span of the first $N$ eigenfunctions of the Laplace operator.
\end{lemma}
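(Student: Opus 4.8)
The plan is to exploit the defining feature of $\widetilde{L}$, namely that it has \emph{no} $H^1$-eigenfunctions, so every nonzero eigenfunction has infinite $H^1$ norm. Consequently a time-average of the low-frequency $H^1$-mass of $\widetilde{S}(t)\widetilde{\Pi}_p f_0$ is forced to diverge as the cutoff $N$ grows, and the role of the compactness of $\mathcal{K}$ is precisely to make this divergence, together with the ergodic averaging in $T$, \emph{uniform}. First I would record the reductions. Since $\mathcal{K}$ is compact with $0\notin\mathcal{K}$ there are $0<m\le M$ with $m\le\norm{f_0}_{L^2}\le M$ on $\mathcal{K}$; the set $\mathcal{K}_1$ is closed in $\mathcal{K}$, hence compact, and its image $\mathcal{K}_1^p:=\widetilde{\Pi}_p(\mathcal{K}_1)$ is a compact subset of the pure-point subspace of $\widetilde{L}$ that is bounded away from $0$, since $\norm{\widetilde{\Pi}_p f_0}_{L^2}\ge\norm{f_0}_{L^2}/2\ge m/2$. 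As the left-hand side equals the quantity below evaluated at $g=\widetilde{\Pi}_p f_0\in\mathcal{K}_1^p$ and $BM^2\ge B\norm{f_0}_{L^2}^2$, it suffices to produce $N_p,T_p$ with $\frac1T\int_0^T\norm{P_{\le N}\widetilde{S}(t)g}_{H^1}^2\dd t\ge BM^2$ for all $g\in\mathcal{K}_1^p$, $N\ge N_p$, $T\ge T_p$.

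Next I would carry out the spectral computation. For fixed $N$ set $A_N=P_{\le N}(-\Delta)P_{\le N}$, a bounded self-adjoint operator with $0\le A_N\le\lambda_N\,\Id$ and $\norm{P_{\le N}v}_{H^1}^2=\inp{A_N v,v}$. Writing $\widetilde{S}(t)=\e^{i\widetilde{L}t}$ and decomposing $g=\sum_\lambda P_\lambda g$ over the eigenprojections $P_\lambda$ of $\widetilde{L}$ (valid for $g$ in the point subspace), a Ces\`aro/mean-ergodic computation kills the off-diagonal phases $\e^{i(\lambda-\mu)t}$ and yields
\[
Q_\infty(N,g):=\lim_{T\to\infty}\frac1T\int_0^T\norm{P_{\le N}\widetilde{S}(t)g}_{H^1}^2\dd t=\sum_\lambda\norm{P_{\le N}P_\lambda g}_{H^1}^2 .
\]
The interchange of this infinite sum with the limit is justified by $A_N\le\lambda_N\Id$ together with $\sum_\lambda\norm{P_\lambda g}_{L^2}^2=\norm{g}_{L^2}^2$, after truncating to finitely many atoms; the same bound shows that $g\mapsto Q_\infty(N,g)$ is a bounded quadratic form, hence continuous on $L^2$.

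The divergence step is where the hypothesis is used. For fixed $g\ne0$ in the point subspace, $Q_\infty(N,g)$ is nondecreasing in $N$ and, by monotone convergence, $Q_\infty(N,g)\uparrow\sum_\lambda\norm{P_\lambda g}_{H^1}^2$. Each nonzero $P_\lambda g$ is an eigenfunction of $\widetilde{L}$, which by assumption cannot lie in $H^1$, so $\norm{P_\lambda g}_{H^1}=+\infty$; since $g\ne0$ forces some $P_\lambda g\ne0$, we get $Q_\infty(N,g)\to+\infty$. I would then upgrade this to a uniform statement by a Dini-type argument: the continuous functions $g\mapsto Q_\infty(N,g)$ increase in $N$ and diverge pointwise on the compact $\mathcal{K}_1^p$, so the open sets $\set{g:Q_\infty(N,g)>2BM^2}$ increase to cover $\mathcal{K}_1^p$, and compactness plus monotonicity give a single $N_p$ with $Q_\infty(N_p,g)\ge 2BM^2$ for all $g\in\mathcal{K}_1^p$. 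For the \emph{fixed} cutoff $N_p$, the forms $g\mapsto\frac1T\int_0^T\norm{P_{\le N_p}\widetilde{S}(t)g}_{H^1}^2\dd t$ are bounded by $\lambda_{N_p}$ uniformly in $T$, hence equi-Lipschitz on $\mathcal{K}_1^p$; with pointwise convergence to $Q_\infty(N_p,\cdot)$ and compactness, the convergence is uniform, producing $T_p$ with the average $\ge Q_\infty(N_p,g)-BM^2\ge BM^2$ for $T\ge T_p$. Finally, since $\norm{P_{\le N}v}_{H^1}^2=\sum_{j\le N}\lambda_j\abs{\inp{v,e_j}}^2$ is itself nondecreasing in $N$, this lower bound persists for all $N\ge N_p$ without re-choosing $T_p$.

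The main obstacle is exactly this uniformity over $\mathcal{K}_1$. The pointwise divergence in $N$ and the pointwise ergodic convergence in $T$ are routine; what requires care is converting them into a \emph{single} pair $(N_p,T_p)$ valid for the whole family, and this is where the compactness of $\mathcal{K}_1^p$ (inherited from $\mathcal{K}$) enters, through the Dini argument for $N$ and the equicontinuity argument for $T$. The technical point that permits one $T_p$ to serve all $N\ge N_p$ at once is the monotonicity of $P_{\le N}$ in $N$, which reduces the entire $T$-averaging analysis to the single cutoff $N_p$.
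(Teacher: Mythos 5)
The paper never proves this lemma: it is imported verbatim from \cite{CKRZ08}*{Lemma 3.3} (modulo the cosmetic change from compact subsets of the unit sphere to general compact sets avoiding the origin, which the authors note is equivalent by linearity), so the only meaningful comparison is with that reference. Your argument is correct and is essentially the argument given there: decompose over the point spectrum of $\widetilde{L}$, use Wiener-type time averaging to kill the off-diagonal phases $\e^{i(\lambda-\mu)t}$, use the absence of $H^1$-eigenfunctions to force $\|P_{\leq N}P_\lambda g\|_{H^1}\to\infty$, and use compactness of $\mathcal{K}_1$ to make the choices of $N_p$ and $T_p$ uniform. The only difference is organizational rather than conceptual: the cited proof secures uniformity by first truncating the eigenfunction expansion uniformly over the compact set (reducing matters to finitely many modes before averaging in time), whereas you keep the full quadratic forms and extract uniformity at the end, via a Dini-type covering argument in $N$ and equi-Lipschitz continuity of the forms in $T$; both routes rest on the same two pillars, and yours is equally rigorous.
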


\subsection{Invariant measures and their support: a general result}\label{sub:invmsr}
The main result of this section is a general statement about the average growth of the $H^1$-norm
of solutions to inviscid problems. 
The result is similar to ideas in \cite{CKRZ08,Zlatos2010}, where time-averaged norm growth of the inviscid problem is 
used to deduce enhanced dissipation of the viscous problem.  
It reads as follows.

\begin{theorem}\label{thm:rigidity}
Let $\mathcal{K}\subset H^1\cap E^\perp$  be a nonempty compact set in $L^2$ such that 
$0\notin \mathcal{K}$. Then the solution operator $S(t):L^2\to L^2$ for \eqref{eq:inviscid} satisfies
\begin{equation}\label{eq:growth}
\lim_{T\to\infty} \inf_{f_0\in \mathcal{K}}\frac1T\int_0^T \|S(t)f_0\|_{H^1}^2\dd t=\infty.
\end{equation}
\end{theorem}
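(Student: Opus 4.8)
The plan is to reduce everything to the two spectral time-average estimates, Lemma~\ref{lem:CKRZ1} and Lemma~\ref{lem:CKRZ2}, applied to the restricted group $\widetilde{S}(t)=\e^{i\widetilde{L}t}$. Since $\mathcal{K}\subset E^\perp$ and $S(t)$ preserves $E^\perp$, we have $S(t)f_0=\widetilde{S}(t)f_0$ for every $f_0\in\mathcal{K}$, so we may decompose $f_0=\widetilde{\Pi}_p f_0+\widetilde{\Pi}_c f_0$ and track the point and continuous parts separately, using that $\widetilde{S}(t)$ commutes with $\widetilde{\Pi}_p$ and $\widetilde{\Pi}_c$. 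Two elementary spectral facts will be used repeatedly: for any $g\in H^1$ one has $\|g\|_{H^1}^2\ge \lambda_{N+1}\big(\|g\|_{L^2}^2-\|P_{\le N}g\|_{L^2}^2\big)$ and $\|P_{\le N}g\|_{H^1}^2\le \lambda_N\|P_{\le N}g\|_{L^2}^2$. Finally, since $\mathcal{K}$ is compact in $L^2$ with $0\notin\mathcal{K}$, the number $\delta:=\inf_{\phi\in\mathcal{K}}\|\phi\|_{L^2}$ is strictly positive.

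The main obstacle is that the point and continuous spectral subspaces of $\widetilde{L}$ are $L^2$-orthogonal but \emph{not} $H^1$-orthogonal (the projections need not commute with $-\Delta$), so one cannot simply add the growth bound for $\widetilde{\Pi}_p f_0$ from Lemma~\ref{lem:CKRZ2} to the one for $\widetilde{\Pi}_c f_0$. To circumvent this I would split $\mathcal{K}=\mathcal{K}_1\cup\mathcal{K}_2$ according to which part carries at least half of the $L^2$ mass, with $\mathcal{K}_1=\{\phi\in\mathcal{K}:\|\widetilde{\Pi}_p\phi\|_{L^2}\ge\|\phi\|_{L^2}/2\}$ exactly as in Lemma~\ref{lem:CKRZ2} and $\mathcal{K}_2=\mathcal{K}\setminus\mathcal{K}_1$; on $\mathcal{K}_2$ one has $\|\widetilde{\Pi}_c\phi\|_{L^2}^2\ge\tfrac34\|\phi\|_{L^2}^2$. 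I then bound the time-averaged $H^1$ norm from below on each piece (applying the lemmas with the full compact set $\mathcal{K}$) and take $T_0$ to be the larger of the two resulting time thresholds.

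On $\mathcal{K}_2$ (continuous part dominant) I would exploit that the conserved $L^2$ mass must escape to high frequencies. Using the first spectral fact with $g=\widetilde{S}(t)f_0$ and unitarity, $\|S(t)f_0\|_{H^1}^2\ge\lambda_{N+1}\big(\|f_0\|_{L^2}^2-\|P_{\le N}\widetilde{S}(t)f_0\|_{L^2}^2\big)$. Splitting $P_{\le N}\widetilde{S}(t)f_0$ into its point and continuous parts and using $(a+b)^2\le 2a^2+2b^2$, the point contribution is at most $2\|\widetilde{\Pi}_p f_0\|_{L^2}^2\le\tfrac12\|f_0\|_{L^2}^2$ on $\mathcal{K}_2$, while the continuous contribution has time average $\le 2\sigma\|f_0\|_{L^2}^2$ by Lemma~\ref{lem:CKRZ1}. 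Taking $\sigma=\tfrac18$ gives $\tfrac1T\int_0^T\|S(t)f_0\|_{H^1}^2\,\dd t\ge\tfrac14\lambda_{N+1}\|f_0\|_{L^2}^2\ge\tfrac14\lambda_{N+1}\delta^2$ for $T\ge T_c(N,\tfrac18,\mathcal{K})$; since $\lambda_{N+1}\to\infty$, choosing $N$ large makes this exceed any prescribed $M$.

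On $\mathcal{K}_1$ (point part dominant) I would instead lower bound by low frequencies, using $\|S(t)f_0\|_{H^1}\ge\|P_{\le N}\widetilde{S}(t)f_0\|_{H^1}$ together with the inequality $\|a+b\|^2\ge\tfrac12\|a\|^2-\|b\|^2$ applied to $a=P_{\le N}\widetilde{S}(t)\widetilde{\Pi}_p f_0$ and $b=P_{\le N}\widetilde{S}(t)\widetilde{\Pi}_c f_0$, which (via the second spectral fact for $b$) yields $\|P_{\le N}\widetilde{S}(t)f_0\|_{H^1}^2\ge\tfrac12\|P_{\le N}\widetilde{S}(t)\widetilde{\Pi}_p f_0\|_{H^1}^2-\lambda_N\|P_{\le N}\widetilde{S}(t)\widetilde{\Pi}_c f_0\|_{L^2}^2$. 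Given $M$, set $B=4M/\delta^2$ and let $N=N_p(B,\mathcal{K})$ from Lemma~\ref{lem:CKRZ2}, so the time average of the first term is $\ge B\|f_0\|_{L^2}^2$, while Lemma~\ref{lem:CKRZ1} bounds the time average of the second by $\lambda_N\sigma\|f_0\|_{L^2}^2$; choosing $\sigma=B/(4\lambda_N)$ gives $\tfrac1T\int_0^T\|S(t)f_0\|_{H^1}^2\,\dd t\ge\tfrac B4\|f_0\|_{L^2}^2\ge M$ for $T$ large. The delicate point here is the order of the choices: $N$ must be fixed first (it is determined by $B$ through Lemma~\ref{lem:CKRZ2}), after which $\sigma$ is taken small relative to the now-fixed $\lambda_N$ so that the continuous remainder is absorbed, and only then is the threshold $T_c(N,\sigma,\mathcal{K})$ read off. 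Combining the two cases and taking the maximum of the thresholds establishes \eqref{eq:growth}.
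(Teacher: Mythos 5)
Your proof is correct and follows essentially the same strategy as the paper: reduce to $\widetilde{S}(t)$, split $\mathcal{K}$ according to whether $\|\widetilde{\Pi}_p f_0\|_{L^2}\geq\|f_0\|_{L^2}/2$, handle the point-dominant case via Lemma \ref{lem:CKRZ2} plus absorption of the continuous part through Lemma \ref{lem:CKRZ1}, and handle the continuous-dominant case by showing the conserved $L^2$ mass escapes to frequencies above $N$. The only differences are cosmetic: your constants differ slightly, and in the continuous-dominant case you bound $\|P_{\leq N}\widetilde{S}(t)f_0\|_{L^2}^2$ from above rather than $\|(I-P_{\leq N})\widetilde{S}(t)f_0\|_{L^2}^2$ from below, which is the same maneuver.
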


\begin{proof}
We first notice that \eqref{eq:growth} only needs to be proven for $\widetilde{S}(t)$ since
we are assuming that $\mathcal{K}$ is a subset of $E^\perp$.
Fix $B> 0$ arbitrarily. We aim to find $T_0=T_0(B, \mathcal{K})>0$
such that for any $T\geq T_0$
\begin{equation}\label{eq:growth1}
\frac1T\int_0^T \|\widetilde{S}(t)f_0\|^2_{H^1} \dd t \geq B\|f_0\|_{L^2}^2, \qquad \forall f_0\in \mathcal{K}.
\end{equation}
The result will follow after noting that $\inf_{f_0 \in \mathcal{K}} \norm{f_0}_{L^2} > 0$ due to compactness and $0 \not\in \mathcal{K}$. 
Notice that it is important 
that $T_0$ depends on $\mathcal{K}$ but not on $f_0$.
Define the following subset of $\mathcal{K}$ (which is also compact in $L^2$),   
\begin{equation}
\mathcal{K}_1=\left\{\phi\in \mathcal{K}: \|\widetilde{\Pi}_p\phi\|_{L^2}\geq \|\phi\|_{L^2}/2\right\}.
\end{equation}
We consider two cases. 

\medskip

\noindent$\diamond$ \emph{Case 1:  $f_0 \in\mathcal{K}_1$}. 
By Lemma \ref{lem:CKRZ2}  there exist $N_p(B,\mathcal{K})$ and $T_p(B,\mathcal{K})$ such 
that for any $T\geq T_p$
\begin{equation}
\frac1T \int_0^T \|P_{\leq N_p}\widetilde{S}(t) \widetilde{\Pi}_pf_0\|^2_{H^1} \dd t \geq  4B \|f_0\|_{L^2}^2.
\end{equation}
Using Lemma \ref{lem:CKRZ1}, fix $T_c(N_p,B,\mathcal{K})$ such that
for all $T\geq T_c$
\begin{equation}
\frac1T \int_0^T \|P_{\leq N_p}\widetilde{S}(t)\widetilde{\Pi}_cf_0\|^2_{L^2} \dd t 
\leq \frac{B}{ \lambda_{N_p}}\|f_0\|_{L^2}^2,
\end{equation}
where $\lambda_{N_p}$ is the $N_p$-th eigenvalue of the Laplace operator on $\TT^d$.
Therefore, for any $T\geq \max\{T_p,T_c\}$ we have
\begin{align}
\frac1T\int_0^T \|\widetilde{S}(t)f_0\|^2_{H^1} \dd t 
&\geq \frac1T\int_0^T \|P_{\leq N_p}\widetilde{S}(t)f_0\|^2_{H^1} \dd t \\
&\geq \frac{1}{2T} \int_0^T \|P_{\leq N_p}\widetilde{S}(t)\widetilde{\Pi}_pf_0\|^2_{H^1}-\frac{1}{T} 
\int_0^T \|P_{\leq N_p}\widetilde{S}(t)\widetilde{\Pi}_cf_0\|^2_{H^1}\\
&\geq \frac{1}{2T} \int_0^T \|P_{\leq N_p}\widetilde{S}(t)\widetilde{\Pi}_pf_0\|^2_{H^1}-\frac{\lambda_{N_p}}{T} 
\int_0^T \|P_{\leq N_p}\widetilde{S}(t)\widetilde{\Pi}_cf_0\|^2_{L^2}\\
&\geq 2B\|f_0\|_{L^2}^2-B\|f_0\|_{L^2}^2=B\|f_0\|_{L^2}^2.
\end{align}
Therefore \eqref{eq:growth1} holds whenever $f_0\in \mathcal{K}_1$. 

\medskip

\noindent$\diamond$ \emph{Case 2:  $f_0 \notin\mathcal{K}_1$}.  Since the eigenvalues $\lambda_n$ of the Laplace operator
form an increasing divergent sequence, we can choose $N_B\in \NN$ such that
\begin{equation}
\frac{\lambda_{N_B}}{16} \geq B.
\end{equation}
By the definition of $\mathcal{K}_1$, we have that
\begin{equation}\label{eq:pic1}
\| \widetilde{\Pi}_cf_0\|^2_{L^2}\geq \frac34\|f_0\|^2_{L^2},
\end{equation}
or, equivalently,
\begin{equation}\label{eq:pic2}
\| \widetilde{\Pi}_pf_0\|^2_{L^2}\leq \frac14\|f_0\|^2_{L^2}.
\end{equation}
Also, exploiting Lemma \ref{lem:CKRZ1}, fix $T_c=T_c(B,\mathcal{K})$ such that
\begin{equation}\label{eq:fix}
\frac1T\int_0^T \|P_{\leq N_B}\widetilde{S}(t)\widetilde{\Pi}_cf_0\|^2_{L^2}\dd t\leq \frac18 \|f_0\|_{L^2}^2, 
\end{equation}
for every $T \geq T_c$ and every $f_0 \in \mathcal{K}$.
Since $\widetilde{S}(t)$ is unitary, for each $t\geq 0$ we have
\begin{align}
\|(I-P_{\leq N_B})\widetilde{S}(t)f_0\|^2_{L^2}&\geq \frac12 \|(I-P_{\leq N_B})\widetilde{S}(t)\widetilde{\Pi}_cf_0\|^2_{L^2}-\|(I-P_{\leq N_B})\widetilde{S}(t)\widetilde{\Pi}_pf_0\|^2_{L^2}\\
&\geq \frac12 \|\widetilde{S}(t)\widetilde{\Pi}_cf_0\|^2_{L^2}-\frac12 \|P_{\leq N_B}\widetilde{S}(t)\widetilde{\Pi}_cf_0\|^2_{L^2} -\|\widetilde{S}(t)\widetilde{\Pi}_pf_0\|^2_{L^2}\\
&=\frac12 \|\widetilde{\Pi}_cf_0\|^2_{L^2}-\frac12 \|P_{\leq N_B}\widetilde{S}(t)\widetilde{\Pi}_cf_0\|^2_{L^2} -
\|\widetilde{\Pi}_pf_0\|^2_{L^2},
\end{align}
and hence with \eqref{eq:pic1}, \eqref{eq:pic2} we conclude
\begin{align}
\|(I-P_{\leq N_B})\widetilde{S}(t)f_0\|^2_{L^2}
&\geq \frac38 \|f_0\|^2_{L^2}-\frac12 \|P_{\leq N_B}\widetilde{S}(t)\widetilde{\Pi}_cf_0\|^2_{L^2} -\frac14\|f_0\|^2_{L^2}\\
&=\frac18 \|f_0\|^2_{L^2}-\frac12 \|P_{\leq N_B}\widetilde{S}(t)\widetilde{\Pi}_cf_0\|^2_{L^2}.
\end{align}
From the above inequality and \eqref{eq:fix}, we then learn that 
\begin{equation}
\frac1T\int_0^T\|(I-P_{\leq N_B})\widetilde{S}(t)f_0\|^2_{L^2}\dd t\geq \frac{1}{16} \|f_0\|^2_{L^2}, \qquad \forall T\geq T_c.
\end{equation}
Therefore, for each $T\geq T_c$ we have
\begin{align}
\frac1T\int_0^T\|\widetilde{S}(t)f_0\|^2_{H^1}\dd t 
&\geq \frac1T\int_0^T\|(I-P_{\leq N_B})\widetilde{S}(t)f_0\|^2_{H^1}\dd t \\
&\geq \frac{\lambda_{N_B}}{T}\int_0^T\|(I-P_{\leq N_B})\widetilde{S}(t)f_0\|^2_{L^2}\dd t \\
&\geq \frac{\lambda_{N_B}}{16} \|f_0\|^2_{L^2}\\
&\geq B \|f_0\|^2_{L^2},
\end{align}
for all $T \geq T_c$ and every $f_0 \in \mathcal{K} \backslash \mathcal{K}_1$.
Hence, \eqref{eq:growth1} is proven in the second case as well, and the proof is concluded.
\end{proof}

Recall that a Borel probability measure $\mu_0\in \MM(L^2)$
 is called an \emph{invariant measure} for $S(t)$ 
if, for every $t\in \RR$,
\begin{equation}\label{eq:inverse}
\mu_0(A)=\mu_0(S(t)A), \qquad \forall A\in \BB(L^2).
\end{equation}
In an equivalent way, a measure $\mu_0$ is invariant for $S(t)$ if
\begin{equation}\label{eq:inverse1}
\int_{L^2} \varphi(\zeta)\dd \mu_0(\zeta)=\int_{L^2}\varphi(S(t)\zeta)\dd \mu_0(\zeta),
\end{equation}
for every $t \in\RR$ and every bounded real-valued continuous function $\varphi$ on $L^2$. 
Notice that there is no need to take the inverse image of $S(t)$ in \eqref{eq:inverse},
since we are dealing with a \emph{group} of operators. The support of $\mu_0$,
denoted by $\spt(\mu_0)$, is the intersection of all closed sets with measure one according to $\mu_0$.

As a consequence of Theorem \ref{thm:rigidity}, we infer the following information about
the support of a certain class of invariant measure $\mu_0$ of $S(t)$. 

\begin{corollary}\label{cor:sptinviscid}
Let $\mu_0 \in \MM(L^2)$ be an invariant measure for $S(t)$ such that
\begin{equation}\label{eq:H1spt}
\int_{L^2} \|\zeta\|_{H^1}^2\dd \mu_0(\zeta)<\infty.
\end{equation}
Then $\mu_0(H^1\cap E)=1$. In particular, $\spt(\mu_0)\subset E$.
\end{corollary}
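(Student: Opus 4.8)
The plan is to deduce that $\mu_0$ concentrates on $E$ by arguing that any invariant measure assigning positive mass to $E^\perp$ would, via Theorem \ref{thm:rigidity}, force the $H^1$ moment to be infinite, contradicting \eqref{eq:H1spt}. First I would observe that the integrability hypothesis \eqref{eq:H1spt} already implies $\mu_0(H^1)=1$, since a set of infinite $H^1$ norm cannot carry positive mass without making the integral diverge; thus it suffices to show $\mu_0(E^\perp \setminus \{0\})=0$, equivalently that the measure restricted to $H^1 \cap E^\perp$ is a point mass at the origin (if it charges $E^\perp$ at all). The natural object to exploit is the time-averaged $H^1$ energy $\frac1T\int_0^T \|S(t)\zeta\|_{H^1}^2\,\dd t$, which by invariance \eqref{eq:inverse1} integrates against $\mu_0$ to the same finite constant $\int_{L^2}\|\zeta\|_{H^1}^2\,\dd\mu_0(\zeta)$ for every $T$, since $\varphi(\zeta)=\|\zeta\|_{H^1}^2$ (suitably truncated to stay bounded) is preserved under $S(t)$ in the sense that its $\mu_0$-integral is $T$-independent.

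The key step is to combine this uniform-in-$T$ bound on the averaged energy with the divergence statement of Theorem \ref{thm:rigidity}. Suppose for contradiction that $\mu_0$ assigns positive mass to $H^1 \cap E^\perp$ away from zero; then by inner regularity of Borel measures on the Polish space $L^2$, there is a compact set $\mathcal{K}\subset H^1\cap E^\perp$ with $0\notin\mathcal{K}$ and $\mu_0(\mathcal{K})=:\delta>0$. Applying Theorem \ref{thm:rigidity} to this $\mathcal{K}$, the quantity $\inf_{f_0\in\mathcal{K}}\frac1T\int_0^T\|S(t)f_0\|_{H^1}^2\,\dd t$ diverges as $T\to\infty$. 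Restricting the $\mu_0$-integral of the averaged energy to $\mathcal{K}$ and bounding the integrand below by this infimum then gives
\begin{equation}
\int_{L^2}\frac1T\int_0^T\|S(t)\zeta\|_{H^1}^2\,\dd t\,\dd\mu_0(\zeta)\geq \delta\cdot \inf_{f_0\in\mathcal{K}}\frac1T\int_0^T\|S(t)f_0\|_{H^1}^2\,\dd t\to\infty,
\end{equation}
while the left side equals the fixed finite value $\int_{L^2}\|\zeta\|_{H^1}^2\,\dd\mu_0(\zeta)<\infty$ by invariance, the desired contradiction.

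I expect the main obstacle to be the interchange of the time-averaging and the spatial ($\mu_0$) integration, together with justifying that $\int_{L^2}\frac1T\int_0^T\|S(t)\zeta\|_{H^1}^2\,\dd t\,\dd\mu_0(\zeta) = \int_{L^2}\|\zeta\|_{H^1}^2\,\dd\mu_0(\zeta)$. The cleanest route uses Tonelli's theorem for the nonnegative integrand to swap the order, then invokes invariance of $\mu_0$ under each $S(t)$ to replace $\int_{L^2}\|S(t)\zeta\|_{H^1}^2\,\dd\mu_0(\zeta)$ by $\int_{L^2}\|\zeta\|_{H^1}^2\,\dd\mu_0(\zeta)$ for a.e.\ $t$; here one must be slightly careful since $\zeta\mapsto\|\zeta\|_{H^1}^2$ is only lower semicontinuous and unbounded on $L^2$, so the invariance identity \eqref{eq:inverse1} should be applied to bounded truncations $\min\{\|\cdot\|_{H^1}^2,M\}$ and passed to the limit via monotone convergence, using that these truncations are genuine $\mu_0$-preserved observables. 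A secondary technical point is the inner-regularity extraction of the compact set $\mathcal{K}$: one chooses it inside $\{ \zeta\in H^1\cap E^\perp : \|\zeta\|_{L^2}\geq\eta\}$ for some small $\eta>0$ carrying positive mass (possible unless the whole $E^\perp$-part of $\mu_0$ is the Dirac mass at $0$), which both guarantees $0\notin\mathcal{K}$ and makes the hypotheses of Theorem \ref{thm:rigidity} applicable.
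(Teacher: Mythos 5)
Your proposal is correct and follows essentially the same route as the paper's own proof: Chebyshev to get $\mu_0(H^1)=1$, invariance of the $H^1$ moment justified by bounded truncations plus monotone convergence, inner regularity to extract a compact $\mathcal{K}\subset (H^1\cap E^\perp)\setminus\{0\}$ of positive mass, and then Tonelli plus Theorem \ref{thm:rigidity} to contradict the finiteness of $\int \|\zeta\|_{H^1}^2\,\dd\mu_0(\zeta)$ (the paper phrases the last step via Chebyshev's inequality, you via a direct restriction to $\mathcal{K}$, which is the same estimate). The only cosmetic difference is the truncation: the paper uses the bounded \emph{continuous} observables $\min\{\|P_{\leq n}\zeta\|_{H^1}^2,n\}$ so that \eqref{eq:inverse1} applies verbatim, whereas your truncations $\min\{\|\cdot\|_{H^1}^2,M\}$ are only lower semicontinuous and thus require invoking invariance for bounded measurable functions via \eqref{eq:inverse}, which is equally valid.
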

In the proof of this result, we will make use of the invariance property \eqref{eq:inverse1}
for the function $\varphi(\cdot)=\|\cdot\|^2_{H^1}$, which is only assumed to be in $L^1(\mu_0)$.
 To justify this, 
for each $n\in \NN$, we truncate the $H^1$ norm on the Fourier side by defining the
sequence of functions
$\varphi_n(\zeta)=
\max\{\varphi(P_{\leq n}\zeta),n\}$. 
Then $\{\varphi_n\}_{n\in\NN}\subset C_b(L^2)$ and by \eqref{eq:inverse1}  we infer that
\begin{equation}
\int_{L^2}\varphi_n(S(t)\zeta)\dd\mu_0(\zeta)=\int_{L^2}\varphi_n(\zeta)\dd\mu_0(\zeta), \quad \forall n\in\NN.
\end{equation}
We can then take $n\to \infty$ and use the monotone convergence theorem to obtain
\begin{equation}\label{eq:H1inv}
\int_{L^2}\|S(t)\zeta\|_{H^1}^2\dd\mu_0(\zeta)=\int_{L^2}\|\zeta\|^2_{H^1}\dd\mu_0(\zeta), \qquad \forall t\geq 0.
\end{equation}

\begin{proof}[Proof of Corollary \ref{cor:sptinviscid}]
Firstly notice that a straightforward application of Chebyshev's inequality together with
assumption \eqref{eq:H1spt} implies that $\mu_0(H^1)=1$.
Fix $c_1>0$ such that
\begin{equation}
\int_{L^2} \|\zeta\|_{H^1}^2\dd \mu_0(\zeta)\leq c_1.
\end{equation}
We need to show that $\mu_0 (E)=1$. 
Suppose not. Then $\mu_0 (E^c \cap H^1) > 0$.
First, consider the case that $\mu_0(E^{\perp} \cap H^1 \setminus \set{0}) > 0$ (note that this holds in the case $E = \set{0}$).  
Then, by the inner regularity of the measure $\mu_0$, 
we can deduce the existence of a compact set 
$\mathcal{K}\subset (E^\perp\cap H^1)\setminus \set{0}$
and an $\eps > 0$  such that
\begin{equation}
\mu_0(\mathcal{K})>\eps. 
\end{equation}
Moreover, we may restrict ourselves to $\mathcal{K}$ such that $\sup_{f \in \mathcal{K}}\norm{f}_{H^1} < \infty$. 
Fix a positive constant $M$ such that
 \begin{equation}
 M\geq \frac{2c_1}{\eps}.
 \end{equation}
By Theorem \ref{thm:rigidity}, we can find $T_M>0$ large enough so that 
\begin{equation}
\inf_{f_0\in \mathcal{K}}\frac1{T_M}\int_0^{T_M} \|S(t)f_0\|_{H^1}^2 \dd t \geq M, 
\end{equation} 
Hence,
\begin{equation}\label{eq:kappa}
\eps < \mu_0\left(\mathcal{K}\right)\leq \mu_0\left(f_0\in H^1: \frac1{T_M}\int_0^{T_M}\|S(t)f_0\|^2_{H^1}\dd t\geq M\right).
\end{equation}
We will see that this is sufficient to rule out the existence of $\mathcal{K}$. 

Second, consider the case that $E$ is non-trivial and $\mu_0(E^{\perp} \cap H^1 \setminus \set{0}) = 0$.
Let $\set{\phi_j}_{j=1}^{\bar{N}}$ (with $\bar{N} \leq \infty$) be an orthonormal basis for $E$ consisting of $H^1$ eigenvalues of $L$. 
By the continuity of $\mu_0$ with respect to decreasing sequences of sets, we have
\begin{align*}
\lim_{N \rightarrow \infty} \mu_0\left( \left(\textup{span}\left(\set{\phi_j}_{j=N}^{\bar{N}}\right) \oplus E^{\perp}\right) \cap E^c \right) = 0, 
\end{align*}
and hence for $N < \bar{N}$ sufficiently large, there holds 
\begin{align*}
\mu_0\left(\left(\textup{span}\left(\set{\phi_j}_{j=1}^{N}\right) \oplus E^{\perp}\right) \cap E^c \right) > 0. 
\end{align*} 
By inner regularity, there exists a compact set $\mathcal{K} \subset \left(\textup{span}(\phi_1,...,\phi_N) \oplus E^{\perp}\right) \cap E^c$ and an $\eps > 0$ such that
\begin{align*}
\mu_0(\mathcal{K}) > \eps.
\end{align*}
As above, we may further restrict ourselves to $\mathcal{K}$ such that $\sup_{f \in \mathcal{K}}\norm{f}_{H^1} < \infty$. 
Due to the fact that $N < \infty$, it follows that $\Pi_e$ maps $\mathcal{K}$ into $H^1$, that is, we have $\Pi_e:\textup{span}(\phi_1,...,\phi_N) \oplus E^{\perp} \rightarrow E \cap H^1$ as a bounded linear operator.  
Hence, there is a constant $C_N$ (depending only on $N$) such that for an arbitrary $f \in \mathcal{K}$,  
\begin{align*}
\norm{S(t)f_0}_{H^1} \geq \norm{S(t)(I - \Pi_e)f_0}_{H^1} - \norm{S(t)\Pi_e f_0}_{H^1} \geq \norm{S(t)(I - \Pi_e)f_0}_{H^1} - C_N \sup_{f_0 \in \mathcal{K}}\norm{f_0}_{H^1}. 
\end{align*}
By Theorem \ref{thm:rigidity}, for any $M'$, we can find $T_{M'}>0$ large enough so that 
\begin{equation}
\inf_{f_0\in (I-\Pi_e)\mathcal{K}}\frac1{T_{M'}}\int_0^{T_{M'}} \|S(t)f_0\|_{H^1}^2 \dd t \geq M', 
\end{equation} 
and hence, by choosing $M'$ sufficiently large relative to $C_N \sup_{f \in \mathcal{K}}\norm{f_0}_{H^1}$, and possibly increasing $T_M$, 
we have that
\begin{align}
\eps < \mu_0\left(\mathcal{K}\right)\leq \mu_0\left(f_0\in H^1: \frac1{T_{M}}\int_0^{T_{M}}\|S(t)f_0\|^2_{H^1}\dd t\geq M\right).\label{ineq:kappa2}
\end{align} 
Since $\mu_0$ is invariant and supported on $H^1$, we use Fubini's theorem and \eqref{eq:H1inv} to obtain
\begin{align}
\int_{L^2}\frac1{T_M} \int_{0}^{T_M}\|S(t)\zeta\|^2_{H^1}\dd t\dd\mu_0(\zeta)
&=\frac1{T_M} \int_{0}^{T_M}\int_{L^2}\|S(t)\zeta\|^2_{H^1}\dd\mu_0(\zeta)\dd t\\
&=\frac1{T_M} \int_{0}^{T_M}\int_{L^2}\|\zeta\|^2_{H^1}\dd\mu_0(\zeta)\dd t\\
&=\int_{L^2}\|\zeta\|^2_{H^1}\dd\mu_0(\zeta)\leq c_1.
\end{align}
To conclude, Chebyshev's inequality, \eqref{eq:kappa} or \eqref{ineq:kappa2}, and our choice of $M$ imply that
\begin{equation}\label{eq:compu}
\begin{aligned}
\eps&<\mu_0\left(\mathcal{K}\right)\leq\mu_0\left(f_0\in H^1: \frac1{T_M}\int_0^{T_M}\|S(t)f_0\|^2_{H^1}\dd t\geq M\right)\\
&\leq \frac{1}{M}\int_{L^2}\frac1{T_M} \int_{0}^{T_M}\|S(t)\zeta\|^2_{H^1}\dd t\,\dd\mu_0(\zeta)
\leq \frac{c_1}{M}\leq \frac{\eps}{2},
\end{aligned}
\end{equation}
a contradiction. This finishes the proof.
\end{proof}

\begin{remark}\label{rmk:nonlin}
Corollary \ref{cor:sptinviscid} essentially exploits the norm growth \eqref{eq:growth} available for the linear
semigroup $S(t)$ for data in any compact set $\mathcal{K}\subset E^\perp$. In principle, linearity is not
needed as long as \eqref{eq:growth} holds. To make this precise, let $R(t):L^2\to L^2$ be a 
(possibly nonlinear) semigroup, and assume that there exists an invariant measure $\mu_0 \in \MM(L^2)$  
for $R(t)$ such that
\begin{equation}
\int_{L^2} \|\zeta\|_{H^1}^2\dd \mu_0(\zeta)<\infty.
\end{equation}
If there exists a compact set $\mathcal{K}$ such that 
$$
\lim_{T\to\infty} \inf_{f_0\in \mathcal{K}}\frac1T\int_0^T \|R(t)f_0\|_{H^1}^2\dd t=\infty,
$$
then $\mu_0(\mathcal{K})=0$. Indeed, if $\mu_0(\mathcal{K})>\eps>0$, a computation analogous to \eqref{eq:compu}
produces a contradiction.
\end{remark}

\subsection{Extensions to an abstract setting}
For simplicity of the presentation, we proved Theorem \ref{thm:rigidity} and Corollary \ref{cor:sptinviscid}
in the  case of the space $L^2$, the operator $\uu\cdot \grad$, and the scale of standard 
Sobolev spaces generated by the Laplace operator. Since Lemmas \ref{lem:CKRZ1}-\ref{lem:CKRZ2}
are valid in a more general setting (see \cite{CKRZ08}), Theorem \ref{thm:rigidity} also holds in greater generality.
We here state the more general context in
which the results of the previous section hold.

Let $(H,\|\cdot\|_H)$ be a Hilbert space, and let $A$ be a strictly
positive self-adjoint linear operator
$$
A:D(A)\subset H\to H,
$$
such that $D(A)$ is compactly embedded in $H$. 
From classical spectral theory \cite{Y80}, we have that $A$ possesses a strictly positive sequence of eigenvalues $\{\lambda_k\}_{k \in \NN}$ such that
\begin{align*}
\begin{cases}
0<\lambda_1\leq \lambda_2 \leq \ldots,\\
\lambda_k\to \infty \quad\text{for} \quad k \to \infty,
\end{cases}
\end{align*}
and associated eigenvectors $\{e_k\}_{k\in\NN}$ which form an orthonormal basis for $H$.
Using the powers of $A$, we can define the  Hilbert space
$$
H^1=D(A^{1/2}),\qquad \|\varphi\|_{H^1}=\|A^{1/2}\varphi\|  .
$$
In particular
$$
\lambda_1\|\varphi\|^2_{H} \leq \|\varphi\|^2_{H^1}.
$$
Let $L$ be a self-adjoint linear operator such that there exists a $c >0$ and $B\in L^2_{loc}(0,\infty)$ such that 
for any $\varphi\in H^1$ and $t>0$,
$$
\| L\varphi\|_H\leq c\|\varphi\|_{H^1}, \qquad \| \e^{iLt}\varphi\|_{H^1}\leq B(t) \|\varphi\|_{H^1}. 
$$
Here $\e^{iLt}$ is the unitary group on $H$ generated by the ordinary differential
equation 
$$
f'-iLf=0, \qquad f(0)=f_0\in H.
$$
Defining the subspace $E$ spanned by the $H^1$ eigenfunctions of $L$ as in \eqref{eq:smootheigen}, namely
\begin{equation}
E= \overline{\mbox{span}\big\{\varphi\in H^1: L\varphi= \lambda \varphi, \ \lambda\in \RR \big\}}^{L^2},
\end{equation}
the abstract version of Theorem \ref{thm:rigidity2} reads as follows.

\begin{theorem}\label{thm:rigidity2}
Let $\mathcal{K}\subset H^1\cap E^\perp$ be a nonempty compact set in $H$ such that 
$0\notin \mathcal{K}$. Then
\begin{equation}
\lim_{T\to\infty} \inf_{f_0\in \mathcal{K}}\frac1T\int_0^T \|\e^{iLt}f_0\|_{H^1}^2\dd t=\infty.
\end{equation}
\end{theorem}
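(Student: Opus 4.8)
The plan is to repeat the proof of Theorem \ref{thm:rigidity} verbatim, checking at each step that it uses only the abstract structure rather than special features of $-\Delta$ and $\uu\cdot\grad$. First I would reduce to the restriction $\widetilde{L}=L|_{E^\perp}$: since $\mathcal{K}\subset E^\perp$ and $E^\perp$ is invariant under $\e^{iLt}$, it suffices to prove the claim for $\widetilde{S}(t)=\e^{i\widetilde{L}t}$. By construction $\widetilde{L}$ has no $H^1$-eigenfunctions, so both Lemma \ref{lem:CKRZ1} and Lemma \ref{lem:CKRZ2} are available (these are quoted from \cite{CKRZ08} and, as noted there, hold in the present abstract generality). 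I would then split $E^\perp$ through the spectral projections $\widetilde{\Pi}_p$ and $\widetilde{\Pi}_c$ of $\widetilde{L}$, fix an arbitrary $B>0$, and aim to produce $T_0=T_0(B,\mathcal{K})$ with $\frac1T\int_0^T\|\widetilde{S}(t)f_0\|_{H^1}^2\,\dd t\geq B\|f_0\|_H^2$ for all $T\geq T_0$ and all $f_0\in\mathcal{K}$; compactness together with $0\notin\mathcal{K}$ gives $\inf_{f_0\in\mathcal{K}}\|f_0\|_H>0$, which upgrades this uniform lower bound to the stated divergence.

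The dichotomy is governed by the compact set $\mathcal{K}_1=\set{\phi\in\mathcal{K}:\|\widetilde{\Pi}_p\phi\|_H\geq\|\phi\|_H/2}$. On $\mathcal{K}_1$ the point-spectral part carries a definite fraction of the mass, so Lemma \ref{lem:CKRZ2} furnishes $N_p,T_p$ making $\frac1T\int_0^T\|P_{\leq N_p}\widetilde{S}(t)\widetilde{\Pi}_pf_0\|_{H^1}^2\,\dd t\geq 4B\|f_0\|_H^2$, while Lemma \ref{lem:CKRZ1} controls the continuous part in the low modes; the two combine, after the elementary spectral inequality $\|P_{\leq N}\varphi\|_{H^1}^2\leq\lambda_N\|P_{\leq N}\varphi\|_H^2$ and the bound $\|a+b\|^2\geq\tfrac12\|a\|^2-\|b\|^2$, exactly as in the concrete proof. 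Off $\mathcal{K}_1$ the continuous part dominates, $\|\widetilde{\Pi}_c f_0\|_H^2\geq\tfrac34\|f_0\|_H^2$; here I would choose $N_B$ with $\lambda_{N_B}/16\geq B$, use the unitarity of $\widetilde{S}(t)$ and Lemma \ref{lem:CKRZ1} to show that a fixed fraction of the mass escapes above mode $N_B$ on time average, and then invoke the reverse inequality $\|(I-P_{\leq N_B})\varphi\|_{H^1}^2\geq\lambda_{N_B}\|(I-P_{\leq N_B})\varphi\|_H^2$ to convert this escape into large $H^1$ norm.

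The only genuinely new verifications, and where I expect the (minor) friction to lie, are the two places where the Laplacian entered concretely. First, the two spectral inequalities above must be read off from the abstract spectral decomposition of $A$: since $\|A^{1/2}e_k\|_H^2=\lambda_k$ with $\set{\lambda_k}$ increasing to infinity, both inequalities hold for the projection $P_{\leq N}$ onto $\mathrm{span}\set{e_1,\dots,e_N}$, and the divergence of $\set{\lambda_k}$ is precisely what permits the choice $\lambda_{N_B}/16\geq B$. Second, I must ensure every time average is finite so that the manipulations are legitimate: this is guaranteed by the hypothesis $\|\e^{iLt}\varphi\|_{H^1}\leq B(t)\|\varphi\|_{H^1}$ with $B\in L^2_{loc}(0,\infty)$, which yields $\int_0^T\|\widetilde{S}(t)f_0\|_{H^1}^2\,\dd t\leq\|f_0\|_{H^1}^2\int_0^T B(t)^2\,\dd t<\infty$ and keeps the orbit in $H^1$. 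With these two points checked, the chain of inequalities in both cases is identical to that of Theorem \ref{thm:rigidity}, and the proof concludes.
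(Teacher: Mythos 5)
Your proposal is correct and is essentially the paper's own argument: the paper gives no separate proof of Theorem \ref{thm:rigidity2}, asserting only that Lemmas \ref{lem:CKRZ1}--\ref{lem:CKRZ2} hold in the abstract setting of \cite{CKRZ08}, so the proof of Theorem \ref{thm:rigidity} carries over verbatim. Your two extra verifications (the spectral inequalities for $P_{\leq N}$ read off from the eigenbasis of $A$, and the role of the hypothesis $B\in L^2_{loc}$) are exactly the points that make this transfer legitimate, so nothing is missing.
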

We then have an analogue of Corollary \ref{cor:sptinviscid}.
\begin{corollary}\label{cor:sptinviscid2}
Let $\mu_0 \in \MM(H)$ be an invariant measure for $\e^{iLt}$ such that
\begin{equation}
\int_{H} \|\zeta\|_{H^1}^2\dd \mu_0(\zeta)<\infty.
\end{equation}
Then $\mu_0(H^1\cap E)=1$. In particular, $\spt(\mu_0)\subset E$.
\end{corollary}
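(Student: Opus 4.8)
The plan is to repeat the argument of Corollary \ref{cor:sptinviscid} essentially verbatim, replacing $L^2$ by $H$, the group $S(t)$ by $\e^{iLt}$, the Laplacian eigenprojections by the spectral projections $P_{\leq N}$ onto $\mathrm{span}\{e_1,\dots,e_N\}$ of $A$, and Theorem \ref{thm:rigidity} by its abstract counterpart Theorem \ref{thm:rigidity2}. First I would observe that the finite-moment hypothesis together with Chebyshev's inequality forces $\mu_0(H^1)=1$, so that it suffices to prove $\mu_0(E)=1$. I would then argue by contradiction: assuming $\mu_0(E)<1$ gives $\mu_0\big((E^\perp\cap H^1)\setminus\set{0}\big)>0$, and since $H$ is a separable Hilbert space (hence Polish), inner regularity of $\mu_0$ lets me extract a compact set $\mathcal{K}\subset (E^\perp\cap H^1)\setminus\set{0}$ with $\mu_0(\mathcal{K})>\eps$ for some $\eps>0$.

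The heart of the matter is the invariance identity
\begin{equation}
\int_H \|\e^{iLt}\zeta\|_{H^1}^2\,\dd\mu_0(\zeta)=\int_H \|\zeta\|_{H^1}^2\,\dd\mu_0(\zeta),\qquad \forall t\in\RR,
\end{equation}
the abstract analogue of \eqref{eq:H1inv}. Because $\|\cdot\|_{H^1}^2$ is only $\mu_0$-integrable rather than bounded and continuous, one cannot invoke the invariance relation \eqref{eq:inverse1} directly. The remedy, exactly as in the concrete setting, is to introduce the truncations $\varphi_n(\zeta)=\min\set{\|P_{\leq n}\zeta\|_{H^1}^2,\,n}$, which belong to $C_b(H)$; apply invariance to each $\varphi_n$; and let $n\to\infty$ by monotone convergence, using that $\|P_{\leq n}\zeta\|_{H^1}^2=\sum_{k\leq n}\lambda_k|\jap{\zeta,e_k}|^2$ increases monotonically to $\|\zeta\|_{H^1}^2$ (with the value $+\infty$ permitted, which is harmless on the $\mu_0$-null set $H\setminus H^1$).

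With the identity in hand, the contradiction is produced as in \eqref{eq:compu}: fixing $c_1$ with $\int_H\|\zeta\|_{H^1}^2\dd\mu_0\leq c_1$ and choosing $M\geq 2c_1/\eps$, Theorem \ref{thm:rigidity2} furnishes a time $T_M$ for which $\inf_{f_0\in\mathcal{K}}\frac{1}{T_M}\int_0^{T_M}\|\e^{iLt}f_0\|_{H^1}^2\dd t\geq M$. Hence $\mathcal{K}$ is contained in the set where the time-averaged $H^1$ norm exceeds $M$; bounding the measure of that set by Chebyshev's inequality, then swapping the time and $\mu_0$ integrals by Fubini and applying the invariance identity, I obtain $\mu_0(\mathcal{K})\leq c_1/M\leq\eps/2<\eps$, contradicting $\mu_0(\mathcal{K})>\eps$.

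I expect the only genuinely delicate point to be the justification of the invariance identity through truncation: one must verify that each $\varphi_n$ is bounded (from the cap at $n$) and continuous on $H$ (from the finite rank of $P_{\leq n}$, which makes $\zeta\mapsto\|P_{\leq n}\zeta\|_{H^1}^2$ continuous), and that the monotone limit genuinely recovers $\|\zeta\|_{H^1}^2$. Everything else is a direct transcription of the proof of Corollary \ref{cor:sptinviscid}, since the abstract hypotheses on $A$ and $L$ were precisely arranged so that Lemmas \ref{lem:CKRZ1}--\ref{lem:CKRZ2}, and therefore Theorem \ref{thm:rigidity2}, remain at our disposal.
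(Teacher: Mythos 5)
Your proof is correct and follows essentially the same route as the paper, which states Corollary \ref{cor:sptinviscid2} precisely as a verbatim transcription of the proof of Corollary \ref{cor:sptinviscid} with Theorem \ref{thm:rigidity2} replacing Theorem \ref{thm:rigidity}. Your treatment of the invariance identity via the truncations $\varphi_n(\zeta)=\min\set{\|P_{\leq n}\zeta\|_{H^1}^2,\,n}$ matches the paper's argument (and in fact fixes its typographical slip, where the truncation is written with $\max$ instead of $\min$).
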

Theorem \ref{thm:rigidity2} and Corollary \ref{cor:sptinviscid2} provide a general result that applies to the following cases, some of which may be of wider interest:
\begin{itemize} 
\item dynamical systems posed on (finite dimensional) Riemannian manifolds without boundaries, indeed, the choice of $\Torus^d$ in \eqref{eq:eq1} was arbitrary and simply for clarity of exposition; 
\item small noise limits of inhomogeneous diffusion problems, for example, 
\begin{align*}
\dd f + \left(\uu \cdot \grad f - \nu\grad \cdot (A(x) \grad f) \right) \dd t = \sqrt{\nu} \, \Psi\, \dd W_t, \qquad f(0)=f_0,
\end{align*}
where $A(x)$ is smooth, symmetric, and uniformly positive definite;    
\item fractional order dissipation; this is discussed further in Section \ref{sec:GenSob} below.   
\end{itemize} 

\subsection{Generalizations to different Sobolev norms} \label{sec:GenSob}
The concrete case discussed in Section \ref{sub:invmsr} corresponds to
$$
H=\left\{\varphi\in L^2: \int_{\TT^d}\varphi(x)\dd x=0\right\}, \qquad A=-\Delta, \qquad L=i\uu\cdot \grad.
$$
By modifying the above setting to 
$$
H=\left\{\varphi\in L^2: \int_{\TT^d}\varphi(x)\dd x=0\right\}, \qquad A=(-\Delta)^{2s}, \qquad L=i\uu\cdot \grad,
$$
for $s>0$, it is easily seen that more general versions of the  previous results hold.
Thanks to Theorem \ref{thm:rigidity2}, it is clear that the classical Sobolev space $H^1$
in Theorem \ref{thm:rigidity}
does not play a specific role other than being the domain of the square root of the Laplace operator.
Correspondingly, for fractional dissipation we may define
\begin{equation}
E= \overline{\mbox{span}\big\{\varphi\in H^s: \uu\cdot \nabla\varphi=i \lambda \varphi, \ \lambda\in \RR \big\}}^{L^2},
\end{equation}
and we then have the following.
\begin{theorem}\label{thm:rigidity3}
Let $s>0$ and $\mathcal{K}\subset H^s\cap E^\perp$ be a nonempty compact set  in $L^2$ such that 
$0\notin \mathcal{K}$. Then
\begin{equation}
\lim_{T\to\infty} \inf_{f_0\in \mathcal{K}}\frac1T\int_0^T \|S(t)f_0\|_{H^s}^2\dd t=\infty.
\end{equation}
\end{theorem}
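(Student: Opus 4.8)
The plan is to present Theorem~\ref{thm:rigidity3} as a direct specialization of the abstract Theorem~\ref{thm:rigidity2}, so that the bulk of the work is simply checking that the fractional setting fits the abstract hypotheses. I would take $H$ to be the mean-zero subspace of $L^2(\TT^d)$ and choose the positive operator $A$ so that its energy space is exactly $H^s$; concretely $A=(-\Delta)^{s}$, for which $D(A^{1/2})=H^s$ and $\|A^{1/2}\varphi\|=\|\varphi\|_{H^s}$, so that the abstract ``$H^1$''-norm of Theorem~\ref{thm:rigidity2} is literally the $H^s$-norm appearing in the conclusion. With this choice the space $E$ built from $H^s$-eigenfunctions of $\uu\cdot\grad$ coincides with the abstract $E$ built from $H^1$-eigenfunctions of $L=i\uu\cdot\grad$, since $L\varphi=\lambda\varphi$ is the same relation as $\uu\cdot\grad\varphi=i\lambda\varphi$ with $\lambda\in\RR$, and thus the two definitions of $E^\perp$ agree.

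Next I would verify the structural hypotheses. On $\TT^d$ the operator $(-\Delta)^{s}$ is diagonalized by the Fourier basis with eigenvalues $\{|k|^{2s}\}$, which on mean-zero functions are bounded below by $1$; hence $A$ is strictly positive and self-adjoint, its eigenvalues form an increasing divergent sequence, its eigenvectors are an orthonormal basis, and $D(A)=H^{2s}$ embeds compactly into $L^2$ by Rellich's theorem. This is exactly the spectral input that Lemmas~\ref{lem:CKRZ1}--\ref{lem:CKRZ2} and the two-case argument in the proof of Theorem~\ref{thm:rigidity} require, namely the projections $P_{\le N}$, the splitting of $\widetilde L$ into point and continuous spectral parts, and the divergence of $\lambda_N$; all of these transfer verbatim. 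That $L=i\uu\cdot\grad$ is self-adjoint and generates the unitary group $S(t)=\e^{iLt}$ is already recorded earlier in the section.

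The genuinely new, and I expect hardest, step is verifying the two operator bounds demanded by Theorem~\ref{thm:rigidity2}: the relative boundedness $\|\uu\cdot\grad\varphi\|_{L^2}\le c\|\varphi\|_{H^s}$ and the growth bound $\|S(t)\varphi\|_{H^s}\le B(t)\|\varphi\|_{H^s}$ with $B\in L^2_{\mathrm{loc}}(0,\infty)$. These pull against each other as $s$ varies, so I would argue by regimes. For $s\ge 1$ the relative bound is immediate, since $\|\uu\cdot\grad\varphi\|_{L^2}\le\|\uu\|_{L^\infty}\|\varphi\|_{H^1}\le\|\uu\|_{L^\infty}\|\varphi\|_{H^s}$ using $H^s\hookrightarrow H^1$ on mean-zero functions; for $0<s\le 1$ the growth bound is the clean one, as writing $S(t)\varphi=\varphi\circ\Phi_{-t}$ and using that $\Phi_{-t}$ is volume-preserving and bi-Lipschitz with $\mathrm{Lip}(\Phi_{-t})\le\e^{\|\uu\|_{\mathrm{Lip}}|t|}$, a change of variables in the Gagliardo seminorm gives $[\varphi\circ\Phi_{-t}]_{H^s}\le\e^{\frac{d+2s}{2}\|\uu\|_{\mathrm{Lip}}|t|}[\varphi]_{H^s}$, which together with the $L^2$-isometry yields an admissible $B(t)$. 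The real obstacle therefore lives in the complementary corners: $0<s<1$ for the relative boundedness, where $\uu\cdot\grad$ costs a full derivative not controlled by $\|\cdot\|_{H^s}$ (so that $H^s\not\subset D(L)$ in general and the hypothesis as stated must be handled with care), and $s>1$ for the growth bound, where differentiating the composition forces one to absorb commutators with $\uu$. I would resolve these by either reworking the abstract hypothesis so that only the group growth bound is actually used, or by imposing mildly stronger regularity on $\uu$ (for instance $\uu\in W^{s,\infty}$); once the two bounds hold on the relevant range of $s$, Theorem~\ref{thm:rigidity2} applies directly and the stated conclusion follows.
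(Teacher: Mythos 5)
Your proposal is correct and takes exactly the paper's route: the paper proves Theorem~\ref{thm:rigidity3} by the one-line observation that the abstract Theorem~\ref{thm:rigidity2} applies with $H$ the mean-zero subspace of $L^2$, $L=i\uu\cdot\grad$, and $A$ a fractional power of the Laplacian. (The paper writes $A=(-\Delta)^{2s}$, which appears to be a slip; your choice $A=(-\Delta)^{s}$, for which $D(A^{1/2})=H^s$ and $\|A^{1/2}\varphi\|_{L^2}=\|\varphi\|_{H^s}$, is the one that matches the statement.)

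Where you go beyond the paper is in taking the hypotheses of the abstract framework seriously, and your diagnosis there is accurate: as literally stated, the relative bound $\|\uu\cdot\grad\varphi\|_{L^2}\le c\|\varphi\|_{H^s}$ fails for $0<s<1$, and the growth bound $\|S(t)\varphi\|_{H^s}\le B(t)\|\varphi\|_{H^s}$ is not available for $s>1$ when $\uu$ is merely Lipschitz, since the flow map is then only bi-Lipschitz. The paper does not address this at all; it simply asserts that the generalization ``is easily seen.'' The correct resolution is your first option, not the second: no extra regularity on $\uu$ is needed, because these two operator bounds are never invoked in the proof of Theorem~\ref{thm:rigidity} (hence of its abstract counterpart Theorem~\ref{thm:rigidity2}). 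That proof uses only Lemmas~\ref{lem:CKRZ1}--\ref{lem:CKRZ2}, the unitarity of $\widetilde{S}(t)$ on $L^2$, and the divergence of the eigenvalues of $A$; and those two lemmas rest on the RAGE theorem, the compactness of $P_{\leq N}$, and the absence of eigenfunctions of $\widetilde{L}$ in $D(A^{1/2})$, which in the fractional setting holds by the very definition of $E$ via $H^s$-eigenfunctions. The bounds $\|L\varphi\|_H\le c\|\varphi\|_{H^1}$ and $B\in L^2_{loc}$ are standing assumptions imported from \cite{CKRZ08}, where they are needed for the viscous approximation lemma (relating $\e^{iLt}$ to the dissipative semigroup), which plays no role in the purely inviscid growth statement. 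With that observation, your regime analysis for $0<s\le 1$ (interpolation, or the Gagliardo-seminorm change of variables under the volume-preserving bi-Lipschitz flow) is correct but ultimately unnecessary, and the theorem holds for all $s>0$ with $\uu$ only Lipschitz, as stated.
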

The importance of the above observation relies on the fact that invariant measures to the deterministic
inviscid equation \eqref{eq:inviscid} will be constructed via a particular vanishing viscosity limit 
of dissipative stochastic flows. If the dissipation is generated by the Laplacian, 
viscous invariant measures will satisfy a bound analogous to \eqref{eq:H1spt} which turns out to be 
stable under the limit procedure. However, if for example the dissipation is given by a fractional 
power of the Laplacian, only a weaker Sobolev norm will be preserved (or a stronger Sobolev norm if one takes $s > 1$). We therefore
state the more general version of Corollary \ref{cor:sptinviscid} hereafter. 

\begin{corollary}\label{cor:sptinviscid3}
Let $s>0$ and $\mu_0 \in \MM(L^2)$ be an invariant measure for $S(t)$ such that
\begin{equation}
\int_{L^2} \|\zeta\|_{H^s}^2\dd \mu_0(\zeta)<\infty.
\end{equation}
Then $\mu_0(H^s\cap E)=1$. In particular, $\spt(\mu_0)\subset E$.
\end{corollary}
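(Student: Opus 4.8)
The plan is to follow the contradiction scheme used in the proof of Corollary \ref{cor:sptinviscid} almost verbatim, making only the substitution of $H^1$ by $H^s$ and of Theorem \ref{thm:rigidity} by Theorem \ref{thm:rigidity3}. First I would record that Chebyshev's inequality applied to the finite second $H^s$-moment hypothesis yields $\mu_0(H^s)=1$, so that it suffices to prove $\mu_0(E)=1$; equivalently, that $\mu_0$ charges no mass on $(E^\perp\cap H^s)\setminus\set{0}$. I would also fix a constant $c_1$ bounding the $H^s$-moment integral, as in the $s=1$ case.

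The only genuinely new input is the fractional analogue of the invariance identity \eqref{eq:H1inv}, namely
\begin{equation}
\int_{L^2}\norm{S(t)\zeta}_{H^s}^2\,\dd\mu_0(\zeta)=\int_{L^2}\norm{\zeta}_{H^s}^2\,\dd\mu_0(\zeta),\qquad \forall t\geq 0.
\end{equation}
Since $\norm{\cdot}_{H^s}^2$ is merely in $L^1(\mu_0)$ rather than bounded and continuous on $L^2$, I would establish this by the same Fourier-truncation device used before \eqref{eq:H1inv}: set $\varphi_n(\zeta)=\min\set{\norm{P_{\leq n}\zeta}_{H^s}^2,n}$, which is bounded and continuous on $L^2$ because $P_{\leq n}$ lands in a fixed finite-dimensional span of Laplacian eigenmodes, on which all norms are comparable. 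Applying the invariance property \eqref{eq:inverse1} to each $\varphi_n$ and letting $n\to\infty$, the sequence increases monotonically to $\norm{\cdot}_{H^s}^2$, so the monotone convergence theorem delivers the identity.

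With this in hand I would argue by contradiction. If $\mu_0(E)<1$ then $\mu_0((E^\perp\cap H^s)\setminus\set{0})>0$, and inner regularity of $\mu_0$ on the Polish space $L^2$ produces a compact set $\mathcal{K}\subset (E^\perp\cap H^s)\setminus\set{0}$ with $\mu_0(\mathcal{K})>\eps$ for some $\eps>0$. Choosing $M\geq 2c_1/\eps$, Theorem \ref{thm:rigidity3} furnishes a time $T_M$ with $\inf_{f_0\in\mathcal{K}}\frac{1}{T_M}\int_0^{T_M}\norm{S(t)f_0}_{H^s}^2\,\dd t\geq M$. Then $\mathcal{K}$ is contained in the set where the time-averaged $H^s$-norm exceeds $M$, whose $\mu_0$-measure is controlled by Chebyshev together with Fubini and the invariance identity above, exactly as in \eqref{eq:compu}; the resulting chain $\eps<\mu_0(\mathcal{K})\leq c_1/M\leq \eps/2$ is the desired contradiction.

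The main obstacle I anticipate is precisely the moment-invariance step: one must verify that the truncations $\varphi_n$ are bounded and continuous on all of $L^2$ (not merely on $H^s$) and that they increase pointwise to $\norm{\cdot}_{H^s}^2$, which is where the mean-zero convention and the fact that $P_{\leq n}$ projects onto finitely many eigenmodes of the Laplacian are essential. Everything else is a transcription of the $s=1$ argument, since Theorem \ref{thm:rigidity3} already packages the fractional growth estimate and the underlying Lemmas \ref{lem:CKRZ1}--\ref{lem:CKRZ2} are insensitive to the choice of $s$.
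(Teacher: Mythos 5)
Your proposal is correct and is essentially the paper's own argument: the paper presents Corollary \ref{cor:sptinviscid3} as a direct transcription of the proof of Corollary \ref{cor:sptinviscid}, with $H^1$ replaced by $H^s$ and Theorem \ref{thm:rigidity} replaced by Theorem \ref{thm:rigidity3}, which is precisely what you carry out, including the Fourier-truncation justification of the moment-invariance identity and the Chebyshev--Fubini contradiction. As a side remark, your truncation $\varphi_n(\zeta)=\min\{\|P_{\leq n}\zeta\|_{H^s}^2,\,n\}$ is the correct one (the $\max$ appearing in the paper's corresponding display for the $s=1$ case is a typo), and your verification of boundedness, continuity on $L^2$, and monotone convergence is exactly the intended argument.
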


\section{Inviscid deterministic limit of viscous stochastic measures}\label{sec:section3}

An interesting class of invariant measures for
infinite dimensional Hamiltonian systems such as the 2D Euler and KdV equations may be obtained from a
viscous-stochastic perturbation where the noise and dissipation terms are carefully balanced.
While the measures obtained from such a procedure have been studied extensively in a series of recent works, see e.g.
\cites{K04, K10, KS04, KS12, MP14}, their structure remains poorly understood.  
Here and in the sequel Section~\ref{sec:examples} we considered this limit in a linear setting and show that the results in the
previous section can be used to obtain significant information about the structure 
of these limiting measures. 

As in the previous Section \ref{sec:inviscid}, we fix a divergence free Lipschitz flow $\uu$ and
consider the corresponding stochastically forced, linear system 
\begin{equation}\label{eq:SPDEviscous}
\dd f + \left(\uu \cdot \grad f - \nu \Delta f\right) \dd t = \sqrt{\nu} \, \Psi\, \dd W_t
=\sqrt{\nu}\sum_{k\in \NN} \psi_k e_k \,\dd W^k_t, \qquad f(0)=f_0,
\end{equation} 
evolving on $\TT^d$ where $\nu\in (0,1]$ is a diffusivity parameter and $\psi_k\geq 0$ are coefficients satisfying 
$$
  \|\Psi\|^2 =\sum_{k\in\NN} |\psi_k|^2<\infty.
$$
The sequence $W_t =\{W^k_t\}_{k\in\NN}$ consists of independent copies of the standard one-dimensional
Wiener process 
(Brownian motion).  As such, for each $k$, $\dd W^k_t$ is formally a 
white noise which, in particular, is stationary in time.

Having fixed a stochastic basis $\mathcal{S}=(\Omega, \mathcal{F}, \{\mathcal{F}_t\}_{t\geq 0}, \PP, W_t)$
and an $\mathcal{F}_0$ measurable initial datum $f_0\in L^2$ 
the existence of a unique weak solution to \eqref{eq:SPDEviscous} can be deduced by classical stochastic PDEs methods;
see \cite{DZ92}. 
More precisely for each $\nu\in (0,1]$, there exists a unique $L^2$-valued random process $\{f^\nu(t)\}_{t \geq 0}$
with $f^\nu(0)=f_0$ almost surely and such that:
\begin{enumerate}
	\item The process $f^\nu(t)$ is $\mathcal{F}_t$-adapted  and
	\begin{equation}
	f^\nu\in C(\RR^+; L^2)\cap L^2_{loc}(\RR^+; H^1). 
	\end{equation}
	almost surely.
	\item Equation \eqref{eq:SPDEviscous} is satisfied in the time integrated sense
	\begin{equation}\label{eq:solsense}
	f^\nu(t)+\int_0^t \big[\uu \cdot \grad f^\nu(s) - \nu \Delta f^\nu(s)\big]\dd s= f^\nu(0)+ \sqrt{\nu}\,\Psi\,W_t ,
	\end{equation}
	with probability 1 for each $t \geq 0$.  Here the equality holds in the space $H^{-1}$.
\end{enumerate} 
When $f_0 \in L^2(\Omega, L^2)$ we have $f \in L^2(\Omega; L^\infty_{loc}(\RR^+; L^2)\cap L^2_{loc}(\RR^+; H^1))$
and these solutions of \eqref{eq:SPDEviscous} are easily seen to satisfy the energy balance equation
\begin{equation}\label{eq:energy}
	\EE \|f^\nu(t)\|^2_{L^2}+2\nu \EE\int_\tau^t\|f^\nu(s)\|^2_{H^1}\dd s=
	\EE \|f^\nu(\tau)\|^2_{L^2}+\nu\|\Psi\|^2 (t-\tau)
\end{equation}
which holds for any $t> \tau\geq 0$.    Moreover using exponential martingale estimates
one has that
\begin{align}
  \mathbb{P} \left( \sup_{t \geq 0} \left( \|f^\nu(t)\|^2_{L^2}+\nu \int_0^t\|f^\nu(s)\|^2_{H^1}\dd s - t \nu\| \Psi\|^2 - \|f_0\|^2 \right)  > K \right) \leq 
  \e^{- \frac{\lambda_1 }{2\| \Psi\|^2}K}
\end{align}
for every $K > 0$, which yields additional exponential moments (see e.g. \cite{KS12}).

It is worth emphasizing that, in contrast to 
active scalar systems like the stochastic Navier-Stokes, we can identify the distribution of solutions of \eqref{eq:SPDEviscous}.
For this, consider the linear deterministic counterpart of \eqref{eq:SPDEviscous}
\begin{equation} \label{eq:viscous}
\de_t f + \uu\cdot \grad f-\nu\Delta f=0,\qquad f(0)=f_0.
\end{equation}
For any $\nu>0$, the associated semigroup generated by \eqref{eq:viscous} will be denoted by
$$
S_\nu(t):L^2\to L^2.
$$
Note that the adjoint $S_\nu(t)^\ast$ is the solution operator associated with 
\begin{equation} \label{eq:viscousadjoint}
\de_t f - \uu\cdot \grad f-\nu\Delta f=0,\qquad f(0)=f_0.
\end{equation} 
Given any deterministic $f_0 \in L^2$ we have that 
$f^\nu(t)$ is Gaussian with mean $S_\nu(t)f_0$
and variance given as 
\begin{align*}
Q_\nu(t) = \nu\int_0^t S_\nu(s)\Psi \Psi^\ast S_\nu(s)^\ast \dd s. 
\end{align*}

\subsection{The Markovian framework and stationary statistical solutions}\label{sub:Markov}
Associated to \eqref{eq:SPDEviscous} is the so-called \emph{Markov semigroup} $\{\MA_t\}_{t\geq 0}$,
defined on the space $M_b(L^2)$  as 
\begin{equation}
\MA_t \varphi(f_0)= \EE\varphi(f^\nu(t,f_0)), \qquad \varphi\in M_b(L^2),\ t\geq 0.
\end{equation}
Here, we stress the dependence on the initial datum by writing $f^\nu(t,f_0)$ for the solution
to \eqref{eq:SPDEviscous} emanating from $f_0$. Since $f^\nu(t,f_0)$ depends continuously on $f_0$,
it follows that $\{\MA_t\}_{t\geq 0}$ is \emph{Feller}, namely, it also maps  $C_b(L^2)$ to itself. 

For each $\nu>0$, the classical Krylov-Bogolyubov procedure establishes
the existence of an \emph{invariant} measure $\mu_\nu\in \MM(L^2)$ for \eqref{eq:SPDEviscous}, that is an element such that
\begin{equation}
\int_{L^2}\MA_t\varphi(\zeta)\dd\mu_\nu(\zeta)=\int_{L^2}\varphi(\zeta)\dd\mu_\nu(\zeta), \qquad \forall t\geq 0.
\end{equation}
Such measures correspond to statistically invariant states of \eqref{eq:SPDEviscous}.
Unlike in the nonlinear setting (see e.g. \cites{DZ96,KS12} and the references therein), the uniqueness of $\mu_\nu$ is not 
an issue here. Indeed, as $S_\nu(t)$ is an exponentially stable dynamical system, its only invariant
measure is the Dirac mass centered at zero. Therefore,
\cite{DZ96}*{Theorem 6.2.1} provides
a precise characterization of $\mu_\nu$. Specifically,
$$
\mu_\nu =\mathcal{N}(0,Q_\nu),
$$
a Gaussian centered at 0 with covariance operator given by
\begin{equation}
Q_\nu = \nu\int_0^\infty S_\nu(t)\Psi \Psi^\ast S_\nu(t)^\ast \dd t. 
\end{equation}
We denote by $f_S^\nu(t)$ a statistically stationary solution associated to $\mu_\nu$,
for which
\begin{equation}
\PP(f_S^\nu(t)\in A)=\mu_\nu(A), \qquad \forall A\in \BB(L^2),\, t\geq 0.
\end{equation}
In particular, it follows from the energy equation \eqref{eq:energy} that any statistically
stationary solution obeys the stronger balance
\begin{equation}\label{eq:H1balance}
\EE\|f_S^\nu (t)\|^2_{H^1}=\int_\Omega \|f_S^\nu (t)\|^2_{H^1}\dd \PP= 
\int_{H^1} \|\zeta\|^2_{H^1}\dd \mu_\nu(\zeta)=\frac12\| \Psi\|^2, \qquad \forall t\geq 0.
\end{equation}
Similarly to \cite{KS12} (and see also e.g. \cites{FG95, DGT11}) we have the following further $\nu$-independent bounds.

\begin{lemma}\label{lem:stat}
Let $f_S^\nu$ be a statistically stationary solution associated to the invariant measure $\mu_\nu$. 
For each $T>0$, define the trajectory space
\begin{equation}\label{eq:stat2}
\YY_T=L^2(I_T;H^1)\cap(H^1(I_T;H^{-1})+ W^{\alpha,4}(I_T;L^2))
\end{equation}
where $I_T=[0,T]$ and $\alpha\in (1/4,1/2)$. Then,
\begin{equation}\label{eq:stat1}
\EE\|f_S^\nu\|^2_{\YY_T}\leq c_0,
\end{equation}
where $c_0=c_0(\|\uu\|_{L^\infty},\alpha, T, \| \Psi\|^2)>0$ is independent of $\nu \in (0,1]$.
\end{lemma}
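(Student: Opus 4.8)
The plan is to split the stationary trajectory $f_S^\nu$ according to the integrated equation \eqref{eq:solsense} into a drift part that is regular in time with values in $H^{-1}$ and a stochastic part that is Hölder regular in time with values in $L^2$, and then to control each of the three constituent norms of $\YY_T$ using stationarity together with the balance \eqref{eq:H1balance}. The factor $\nu$ sitting in front of both the dissipation and the noise in \eqref{eq:SPDEviscous} is what will ultimately produce $\nu$-uniformity for $\nu\in(0,1]$. The first, and easiest, contribution is the $L^2(I_T;H^1)$ piece: by stationarity and \eqref{eq:H1balance} one has
\begin{equation}
\EE\int_0^T \norm{f_S^\nu(t)}^2_{H^1}\dd t = \int_0^T \EE\norm{f_S^\nu(t)}_{H^1}^2\dd t = \frac{T}{2}\norm{\Psi}^2,
\end{equation}
which is already independent of $\nu$.

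Next I would write $f_S^\nu=g+h$, where $h(t)=\sqrt{\nu}\,\Psi\,W_t$ is the stochastic term and $g(t)=f_S^\nu(0)-\int_0^t\big[\uu\cdot\grad f_S^\nu(s)-\nu\Delta f_S^\nu(s)\big]\dd s$, so that $\de_t g=-(\uu\cdot\grad f_S^\nu-\nu\Delta f_S^\nu)$ as an $H^{-1}$-valued identity. Using that $\uu$ is divergence free, so that $\uu\cdot\grad f=\grad\cdot(\uu f)$ and hence $\norm{\uu\cdot\grad f}_{H^{-1}}\leq\norm{\uu}_{L^\infty}\norm{f}_{L^2}$, together with $\norm{\Delta f}_{H^{-1}}=\norm{f}_{H^1}$, I obtain the pointwise-in-time bound $\norm{\de_t g(t)}_{H^{-1}}\leq\norm{\uu}_{L^\infty}\norm{f_S^\nu(t)}_{L^2}+\nu\norm{f_S^\nu(t)}_{H^1}$. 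Taking expectations and invoking stationarity, the Poincaré inequality \eqref{eq:poinc}, the identity \eqref{eq:H1balance}, and $\nu\leq 1$ bounds $\EE\norm{\de_t g}_{L^2(I_T;H^{-1})}^2$ uniformly in $\nu$; the same inputs applied to $g(t)=g(0)+\int_0^t\de_t g$ control $\EE\norm{g}_{L^2(I_T;H^{-1})}^2$, and together these give $\EE\norm{g}^2_{H^1(I_T;H^{-1})}\leq c$.

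For the stochastic term I would argue by moments. Since $h(t)-h(s)=\sqrt{\nu}\,\Psi(W_t-W_s)$ is a centered $L^2$-valued Gaussian, independence of the $W^k$ and the Gaussian fourth-moment identity give $\EE\norm{h(t)-h(s)}_{L^2}^4\leq c\,\nu^2\norm{\Psi}^4|t-s|^2$. Consequently the expected Gagliardo seminorm obeys
\begin{equation}
\EE\int_0^T\!\!\int_0^T \frac{\norm{h(t)-h(s)}_{L^2}^4}{|t-s|^{1+4\alpha}}\dd t\,\dd s \leq c\,\nu^2\norm{\Psi}^4\int_0^T\!\!\int_0^T |t-s|^{1-4\alpha}\dd t\,\dd s,
\end{equation}
and the last integral is finite precisely because $\alpha<1/2$; combined with the elementary bound on $\EE\int_0^T\norm{h(t)}^4_{L^2}\dd t$, this yields $\EE\norm{h}^4_{W^{\alpha,4}(I_T;L^2)}\leq c\,\nu^2$, whence $\EE\norm{h}^2_{W^{\alpha,4}(I_T;L^2)}\leq c$ uniformly in $\nu\leq 1$ by Jensen's inequality. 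Finally, since $\norm{f_S^\nu}_{H^1(I_T;H^{-1})+W^{\alpha,4}(I_T;L^2)}\leq\norm{g}_{H^1(I_T;H^{-1})}+\norm{h}_{W^{\alpha,4}(I_T;L^2)}$ for this particular decomposition, summing the three estimates gives \eqref{eq:stat1} with $c_0$ depending only on $\norm{\uu}_{L^\infty}$, $\alpha$, $T$, and $\norm{\Psi}^2$.

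The main obstacle is the fractional-in-time estimate for $h$: one must produce the fourth-moment growth $\EE\norm{h(t)-h(s)}^4_{L^2}\sim|t-s|^2$ and verify integrability of the Gagliardo kernel, which is exactly where the restriction $\alpha<1/2$ enters. I note that the lower bound $\alpha>1/4$ plays no role in this lemma — it is reserved for the Aubin--Lions-type compact embedding of $\YY_T$ used later in the tightness argument — so here only $\alpha<1/2$ is relevant. Because $h$ is the explicit Gaussian process $\sqrt{\nu}\,\Psi\,W_t$ rather than a genuine stochastic convolution, these moment computations are direct and no factorization method is required; every remaining term reduces to the stationary $H^1$ balance and Poincaré's inequality.
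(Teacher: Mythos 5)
Your proposal is correct and follows essentially the same route as the paper: the same splitting $f_S^\nu = g + \sqrt{\nu}\,\Psi W_t$, the same $H^{-1}$ drift estimates via $\|\uu\cdot\grad f\|_{H^{-1}}\leq \|\uu\|_{L^\infty}\|f\|_{L^2}$ combined with stationarity and \eqref{eq:H1balance}, and the same Gaussian fourth-moment bound $\EE\|\Psi(W_t-W_s)\|_{L^2}^4\leq c\|\Psi\|^4|t-s|^2$ for the Gagliardo seminorm with $\alpha<1/2$. Your additional remarks (Jensen's inequality to pass from fourth to second moments, the infimum characterization of the sum-space norm, and the observation that $\alpha>1/4$ is only needed later for compactness) simply make explicit details the paper leaves implicit.
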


\begin{proof}
The first part of the bound in $L^2(I_T;H^1)$ follows directly from \eqref{eq:H1balance}.
For the second bound in $H^1(I_T;H^{-1})+ W^{\alpha,4}(I_T;L^2)$ we split \eqref{eq:solsense} as, 
\begin{equation} 
f_S^\nu(t)=g(t)+ \sqrt{\nu} \Psi W_t,
\end{equation}
where
\begin{equation}
g^\nu(t)=-\int_0^t \big[\uu \cdot \grad f_S^\nu(s) - \nu \Delta f_S^\nu(s)\big]\dd s+ f^\nu(0).
\end{equation}
Observe that
\begin{align} 
\|\de_t g^\nu(t)\|_{H^{-1}}&\leq \left(\|\uu\|_{L^\infty}\|f_S^\nu(t)\|_{L^2}+\nu\|f_S^\nu(t)\|_{H^1}\right)\\
&\leq (1+\|\uu\|_{L^\infty})\|f_S^\nu(t)\|_{H^1}.
\end{align}
Similarly 
\begin{align*}
 \|g^\nu(t)\|^2_{H^{-1}} \leq c(1+\|\uu\|_{L^\infty})\int_0^T\|f_S^\nu(t)\|_{H^1}^2 dt +  c\|f^\nu_S(0)\|_{H^1}^2.
\end{align*}
As a consequence, making another use of  \eqref{eq:H1balance} we conclude that
\begin{equation}
\EE\int_0^T ( \|g^\nu(t)\|^2_{H^{-1}}  + \|\de_t g^\nu(t)\|^2_{H^{-1}})\dd t\leq c(1+\|\uu\|^2_{L^\infty})\| \Psi\|^2,
   \label{eq:div:p:e:1}
\end{equation}
for a constant $c >0$ independent of $\nu \in (0,1]$.
Since $\Psi W_t - \Psi W_s \sim \mathcal{N}(0, \Psi (t-s))$ for any $t > s \geq 0$ we have that
$$
\EE \| \Psi W_t - \Psi W_s\|^4 \leq c\| \Psi\|^4(t-s)^2
$$ 
which yields the estimate
\begin{equation}
\EE\int_0^T\|\Psi W_t\|_{L^2}^4\dd t+\EE\int_0^T\int_0^T\frac{\| \Psi W_t- \Psi W_s\|_{L^2}^4}{|t-s|^{1+4\alpha}}\,\dd t\, \dd s\leq c(T) \| \Psi\|^4.
   \label{eq:div:p:e:2}
\end{equation}
Combining \eqref{eq:div:p:e:1} and \eqref{eq:div:p:e:2} now gives the second $\nu$-independent bound concluding the proof.
\end{proof}

\subsection{The inviscid limit}
Thanks to the compactness of the embedding of $H^1$ into $L^2$ and \eqref{eq:H1balance}, 
the collection $\{\mu_\nu\}_{\nu\in(0,1]}$ is easily seen to be tight.
As such one can extract weakly convergent subsequences and we will refer to any
limiting probability measure, denoted by $\mu_0$, as a \emph{Kuksin} measure.  As mentioned above, such measures have been extensively studied 
in an analogous nonlinear setting \cites{K04, K10, KS04, KS12, GSV13, MP14}. 
Let us now recall some properties of $\mu_0$ which may be obtained in a similar manner to these works. 

We begin by observing that invariance is preserved in this inviscid limit
\begin{proposition}\label{prop:inviscid}
The measure $\mu_0\in \MM(L^2)$ is invariant under the group $\{S(t)\}_{t \in \RR}$ defined by \eqref{eq:inviscid}, namely
\begin{equation}
\mu_0(A)=\mu_0(S(t)A), \qquad \forall A\in \BB(L^2), \ t\in \RR.
\end{equation}
\end{proposition}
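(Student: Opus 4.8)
The plan is to deduce the invariance of $\mu_0$ from that of the viscous measures $\mu_\nu$, passing to the limit along the subsequence for which $\mu_\nu \to \mu_0$ weakly. Since $L^2$ is separable, it suffices to verify \eqref{eq:inverse1} for every bounded Lipschitz $\varphi$ (such functions are measure-determining), and by the group property $S(t)^{-1}=S(-t)$ it is enough to treat $t\geq 0$. So fix such a $\varphi$, with Lipschitz constant $L$, and fix $t\geq 0$. The invariance of $\mu_\nu$ under the Markov semigroup reads $\int_{L^2}\MA_t\varphi\,\dd\mu_\nu=\int_{L^2}\varphi\,\dd\mu_\nu$. The right-hand side converges to $\int_{L^2}\varphi\,\dd\mu_0$ by weak convergence, so the crux is to show the left-hand side converges to $\int_{L^2}\varphi(S(t)\zeta)\,\dd\mu_0(\zeta)$.

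To analyze $\MA_t\varphi$, I would exploit the linear, additive-noise structure: for each $\zeta\in L^2$ the solution has the law of $f^\nu(t,\zeta)=S_\nu(t)\zeta+R^\nu(t)$, where $R^\nu(t)$ is a centered Gaussian with covariance $Q_\nu(t)=\nu\int_0^t S_\nu(s)\Psi\Psi^\ast S_\nu(s)^\ast\,\dd s$ that is \emph{independent of the initial datum} $\zeta$. Since $S_\nu(s)$ is an $L^2$-contraction, $\EE\norm{R^\nu(t)}_{L^2}^2=\mathrm{tr}\,Q_\nu(t)\leq \nu t\,\norm{\Psi}^2$. Using that $\varphi$ is Lipschitz and Jensen's inequality,
\begin{equation*}
\abs{\MA_t\varphi(\zeta)-\varphi(S(t)\zeta)}\leq \EE\,\abs{\varphi(S_\nu(t)\zeta+R^\nu(t))-\varphi(S(t)\zeta)}\leq L\norm{S_\nu(t)\zeta-S(t)\zeta}_{L^2}+L(\nu t)^{1/2}\norm{\Psi},
\end{equation*}
so the noise contribution is uniformly $o(1)$ as $\nu\to 0$.

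It remains to show $\int_{L^2} L\norm{S_\nu(t)\zeta-S(t)\zeta}_{L^2}\,\dd\mu_\nu(\zeta)\to 0$, and two ingredients combine. First, the vanishing-viscosity convergence $S_\nu(t)\zeta\to S(t)\zeta$ holds for each fixed $\zeta$, and since $\norm{S_\nu(t)}_{L^2\to L^2}\leq 1$ uniformly, this strong convergence is in fact uniform on compact subsets of $L^2$. Second, the uniform bound \eqref{eq:H1balance} gives $\int_{L^2}\norm{\zeta}_{H^1}^2\,\dd\mu_\nu=\tfrac12\norm{\Psi}^2$, so Chebyshev yields $\mu_\nu(\set{\norm{\zeta}_{H^1}>R})\leq \norm{\Psi}^2/(2R^2)$ uniformly in $\nu$; as balls in $H^1$ are precompact in $L^2$, the compact sets $\mathcal{C}_R=\overline{\set{\norm{\zeta}_{H^1}\leq R}}^{L^2}$ carry almost all the mass uniformly. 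Splitting the integral over $\mathcal{C}_R$ and its complement, the part over $\mathcal{C}_R$ tends to $0$ as $\nu\to 0$ by uniform convergence on the compact $\mathcal{C}_R$, while the part over $\mathcal{C}_R^c$ is bounded by $2L\int_{\mathcal{C}_R^c}\norm{\zeta}_{L^2}\,\dd\mu_\nu\leq 2L\,\mu_\nu(\mathcal{C}_R^c)^{1/2}\big(\int_{L^2}\norm{\zeta}_{L^2}^2\,\dd\mu_\nu\big)^{1/2}$, which is small for $R$ large, uniformly in $\nu$, using \eqref{eq:poinc} and \eqref{eq:H1balance}. Sending $\nu\to 0$ for fixed $R$ and then $R\to\infty$ gives the claim; combined with $\int\varphi\circ S(t)\,\dd\mu_\nu\to\int\varphi\circ S(t)\,\dd\mu_0$ (legitimate since $\varphi\circ S(t)\in C_b(L^2)$) this closes the argument.

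The main obstacle is precisely this last passage: both the integrand $\MA_t\varphi-\varphi\circ S(t)$ and the measure $\mu_\nu$ depend on $\nu$, so pointwise convergence of the integrand alone does not suffice. It is resolved by upgrading to uniform-on-compacts convergence (via the contraction bound) together with the uniform tightness and $L^2$-uniform integrability supplied by \eqref{eq:H1balance}. The decisive simplification is linearity with additive noise, which decouples $R^\nu(t)$ from $\zeta$; for a genuinely nonlinear problem one would instead pass to the limit at the level of stationary trajectories, using the compactness of the path space $\YY_T$ from Lemma \ref{lem:stat} and identifying the limit as a solution of \eqref{eq:inviscid}.
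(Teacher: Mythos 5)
Your proof is correct, but it follows a genuinely different route from the paper's. The paper (following Kuksin--Shirikyan \cite{KS12}*{Theorem 5.2.2} and Flandoli--Gatarek \cite{FG95}) argues at the level of \emph{trajectories}: it lifts the measures $\mu_\nu$ to path-space measures $\mmu_\nu$ on $\YY_T$, uses the $\nu$-uniform bounds of Lemma \ref{lem:stat} to get tightness in $C(\RR^+;H^{-\eps})\cap L^2_{loc}(\RR^+;H^{1-\eps})$, extracts a limit $\mmu_0$, and identifies it as the law of a stationary process whose sample paths solve \eqref{eq:inviscid}; invariance of the time marginal $\mu_0$ then follows. You instead work at fixed time with the Markov semigroup, exploiting the affine structure of the linear additive-noise problem: $f^\nu(t,\zeta)=S_\nu(t)\zeta+R^\nu(t)$ with $\EE\|R^\nu(t)\|_{L^2}^2=\mathrm{tr}\,Q_\nu(t)\leq \nu t\|\Psi\|^2$ independent of $\zeta$, so that $\MA_t\varphi$ converges to $\varphi\circ S(t)$ quantitatively (uniformly on compacts, via the vanishing-viscosity limit $S_\nu(t)\to S(t)$ strongly together with the contraction bound), while the $H^1$ balance \eqref{eq:H1balance} supplies the uniform tightness and $L^2$-uniform integrability needed to handle the $\nu$-dependence of both integrand and measure. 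Your argument is more elementary and self-contained -- it avoids path-space compactness and stochastic-analytic machinery entirely -- but, as you correctly note, it hinges on linearity with additive noise; the paper's trajectory-level construction is the one that survives in nonlinear settings, and moreover the objects it produces (the stationary process $f_S$ and the lifted measure $\mmu_0$) are reused immediately afterwards in Lemmas \ref{lem:invstat1} and \ref{lem:H1bdd}, so the paper gets more than invariance out of the same compactness argument. The only ingredient you assert without proof is the strong convergence $S_\nu(t)\zeta\to S(t)\zeta$ for fixed $\zeta\in L^2$; this is standard (an energy estimate gives $\|S_\nu(t)\zeta-S(t)\zeta\|_{L^2}^2\leq \nu\int_0^t\|S(s)\zeta\|_{H^1}^2\,\dd s$ for $\zeta\in H^1$, and one concludes by density and the uniform contraction property), but it deserves a line in a complete write-up.
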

As discussed in Section \ref{sec:inviscid},
the inviscid problem \eqref{eq:inviscid} is well-posed for initial data in $L^2$; let $\XX$ be
the set of all solutions to \eqref{eq:inviscid}. As we have seen,
\begin{equation}
\XX\subset C_b(\RR;L^2)\cap W^{1,\infty}(\RR;H^{-1}).
\end{equation}
Define $K_0:\XX\to L^2$ by $K_0 \varphi=\varphi(0)$. From uniqueness, it follows that $K_0$ is one-to-one, while
existence for any arbitrary initial datum $f_0\in L^2$ shows that $K_0$ is onto, hence invertible.
As a consequence,
from any Borel probability measure $\mu$ on $L^2$ it is possible to define a \emph{lifted} probability measure $\mmu$ 
on $\XX$ via
$$
\mmu(A)=\mu(K_0 A), \qquad A\in  \BB(\XX).
$$
The proof of Proposition \ref{prop:inviscid} is very similar to that in \cite{KS12}*{Theorem 5.2.2}, 
and see also \cite{FG95}.  We therefore omit the details of the following steps, based on compactness arguments and
probabilistic methods.

\begin{itemize}
	\item To the sequence $\{\mu_\nu\}_{\nu\in(0,1]}\subset \MM(L^2)$, we associate the sequence
	of lifted measures on trajectories $\{\mmu_\nu\}_{\nu\in(0,1]}\subset \MM(\YY_T)$. The latter is tight in  
	$C(\RR^+;H^{-\eps})\cap L^2_{loc}(\RR^+;H^{1-\eps})$, for any $\eps>0$, 
	thanks to  Lemma \ref{lem:stat}, hence limit points $\mmu_0$ exist.
	
	\item In view of \eqref{eq:stat1}, $\mmu_0(L^2(I_T;H^1))=1$, and $\mmu_0$ is in fact the lifting of the
	measure $\mu_0$. Moreover, $\mmu_0$ is the law of a stationary process $f_S$ whose 
	trajectories solve the inviscid equation \eqref{eq:inviscid}, at least up to a set of measure zero.  This 
	implies the $\mu_0$ is invariant under $S(t)$.
\end{itemize}

Let us next highlight some further properties of $\mu_0$ and its associated statistically
stationary solutions $f_S$.

\begin{lemma}\label{lem:invstat1}
Let $f_S$ be a  statistically stationary solution of \eqref{eq:inviscid} associated to a Kuksin measure $\mu_0$. Then almost every realization
of $f_S$ belongs to the space $\XX$.
\end{lemma}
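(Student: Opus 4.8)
The plan is to show that, for almost every realization, the trajectory $f_S(\cdot,\omega)$ coincides with the honest solution $t\mapsto S(t)f_S(0,\omega)$, which by the discussion of Section~\ref{sec:inviscid} lies in $\XX$. Since $f_S$ is stationary with one-time marginal $\mu_0\in\MM(L^2)$, the random variable $f_S(0)$ takes values in $L^2$ almost surely, so the group $\{S(t)\}_{t\in\RR}$ produces a well-defined $\XX$-valued process $g_S:=S(\cdot)f_S(0)$ on the same probability space. It then suffices to prove $f_S=g_S$ almost surely, since almost every realization of $g_S$ belongs to $\XX$.

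First I would record the regularity and weak-solution properties inherited from the limiting construction. By \eqref{eq:stat1} and the identification of $\mmu_0$ as the lift of $\mu_0$, we have $f_S(\cdot,\omega)\in L^2_{loc}(\RR^+;H^1)$ for almost every $\omega$, and the same trajectory satisfies the time-integrated form of \eqref{eq:inviscid},
\begin{equation*}
f_S(t)=f_S(0)-\int_0^t \uu\cdot\grad f_S(s)\,\dd s,
\end{equation*}
the equality holding in $H^{-1}$. Because $\uu$ is bounded and divergence free one may write $\uu\cdot\grad f_S=\grad\cdot(\uu f_S)$, so that $\uu\cdot\grad f_S(\cdot,\omega)\in L^2_{loc}(\RR^+;H^{-1})$ whenever $f_S(\cdot,\omega)\in L^2_{loc}(\RR^+;L^2)$; in particular $\de_t f_S(\cdot,\omega)\in L^2_{loc}(\RR^+;H^{-1})$. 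A standard Lions--Magenes interpolation then upgrades the a priori $H^{-\eps}$ continuity of the trajectories to $f_S(\cdot,\omega)\in C(\RR^+;L^2)$, which also makes the evaluation $f_S(0,\omega)$ unambiguous and consistent with the initial value of $g_S$.

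Next I would invoke uniqueness. The difference $w=f_S-g_S$ lies in $L^2_{loc}(H^1)$ with $\de_t w\in L^2_{loc}(H^{-1})$, vanishes at $t=0$, and solves $\de_t w+\uu\cdot\grad w=0$ weakly. The energy identity, valid in this regularity class, gives
\begin{equation*}
\ddt \tfrac12\|w(t)\|_{L^2}^2=-\inp{\uu\cdot\grad w,\,w}=\tfrac12\int_{\TT^d}(\grad\cdot\uu)\,w^2\,\dd x=0,
\end{equation*}
using $\grad\cdot\uu=0$, so $\|w(t)\|_{L^2}\equiv\|w(0)\|_{L^2}=0$ and hence $f_S(\cdot,\omega)=S(\cdot)f_S(0,\omega)$ for almost every $\omega$. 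This matches the uniqueness underlying the group $S(t)$ in Section~\ref{sec:inviscid}, and it identifies the realization with an element of $\XX$: unitarity of $S(t)$ gives $\|S(t)f_S(0)\|_{L^2}=\|f_S(0)\|_{L^2}$, whence membership in $C_b(\RR;L^2)$, while \eqref{eq:globalest} supplies the bound in $W^{1,\infty}(\RR;H^{-1})$; the two-sided time interval is recovered from the group property of $\{S(t)\}_{t\in\RR}$.

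The main obstacle is not the uniqueness estimate itself but the passage from the weak form produced by the limit---valid only for almost every $t$ and in the low-regularity trajectory space $C(\RR^+;H^{-\eps})\cap L^2_{loc}(\RR^+;H^{1-\eps})$---to a genuine weak solution in $L^2_{loc}(H^1)$ with distributional time derivative in $L^2_{loc}(H^{-1})$, for which the chain rule justifying the energy identity applies. Care is also needed to ensure that the exceptional set of bad realizations is genuinely negligible and that $f_S(0)\in L^2$ almost surely, both of which follow from Lemma~\ref{lem:stat} and the fact that $\mu_0\in\MM(L^2)$.
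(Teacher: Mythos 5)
Your route is viable and genuinely different from the paper's, but one step needs shoring up. The paper never identifies $f_S$ with a group orbit: it takes the uniform bound $\EE\|f_S^\nu\|^2_{\YY_T}\leq c_0$ of Lemma \ref{lem:stat}, passes it to the weak limit by lower semicontinuity to get $\EE\|f_S\|^2_{\YY_T}\leq c_0$, invokes the embedding $\YY_T\hookrightarrow C(I_T;L^2)$ to conclude $f_S\in C(\RR^+;L^2)$ almost surely, and then extends backward in time by time-reversibility. You instead exploit the fact that the limiting trajectories solve the \emph{deterministic} equation, so the time-derivative bound $\de_t f_S=-\grad\cdot(\uu f_S)\in L^2_{loc}(\RR^+;H^{-1})$ comes for free from $L^2_{loc}(\RR^+;L^2)$ regularity, and the classical Lions--Magenes lemma gives continuity into $L^2$ without ever touching the sum space $H^1(I_T;H^{-1})+W^{\alpha,4}(I_T;L^2)$, whose fractional component exists only to absorb the stochastic term that is absent in the limit. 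Your subsequent identification $f_S=S(\cdot)f_S(0)$ by an energy argument then makes explicit what the paper compresses into the phrase ``time-reversibility,'' and lands you in $\XX$ via unitarity and \eqref{eq:globalest}. Note that both proofs rest on the same input at the start: your claim that $f_S\in L^2_{loc}(\RR^+;H^1)$ almost surely is precisely the lower-semicontinuity step that the paper's proof spells out, here imported from the bullet points preceding the lemma.

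The gap is in the uniqueness step: you assert that $w=f_S-g_S$ lies in $L^2_{loc}(H^1)$, but this requires $g_S=S(\cdot)f_S(0)\in L^2_{loc}(H^1)$, i.e.\ $f_S(0)\in H^1$ almost surely; knowing only $f_S(0)\in L^2$ (which is all that $\mu_0\in\MM(L^2)$ gives) is not enough, and without it the chain rule behind your energy identity is unjustified, since the pairing $\inp{\de_t w,w}$ needs $w(t)\in H^1$ for a.e.\ $t$. The missing fact is true and easy to supply: since $f_S\in L^2_{loc}(\RR^+;H^1)$ almost surely, Fubini's theorem gives $\PP(f_S(t)\in H^1)=1$ for a.e.\ $t$, and stationarity makes this probability constant in $t$, so $\mu_0(H^1)=1$ and $f_S(0)\in H^1$ almost surely; then the estimate $\|S(t)f_S(0)\|_{H^1}\leq c\,\e^{\|\uu\|_{\mathrm{Lip}}|t|}\|f_S(0)\|_{H^1}$ places $g_S$ in $L^\infty_{loc}(\RR^+;H^1)$. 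Alternatively, you could bypass the $H^1$ issue entirely by proving uniqueness of weak $C(L^2)$ solutions of the transport equation with Lipschitz drift by duality against the backward adjoint equation (solved by characteristics), rather than by the energy method. With either repair your proof is complete.
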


\begin{proof}
By construction, any statistically stationary solution $f_S$ is a solution to the 
inviscid problem \eqref{eq:inviscid} and a limit point of a subsequence
of statistically stationary solutions $f_S^\nu$ associated to $\mu_{\nu}$. Almost surely
and for every $T>0$, $f_S^\nu$  belongs to the trajectory space $\YY_T$ (see \eqref{eq:stat2}),
with
\begin{equation}
\EE\|f_S^\nu\|_{\YY_T}^2\leq c_0.
\end{equation}
In turn, a lower semicontinuity argument implies that the same holds for weak subsequential limits, namely
\begin{equation}
\EE\|f_S\|_{\YY_T}^2\leq c_0.
\end{equation}
Since the space $\YY_T$ (see e.g. \cites{K02,L69, LM72}) is continuously embedded in $C(I_T;L^2)$, we 
infer that almost surely 
\begin{equation}
f_S\in C(\RR^+;L^2).
\end{equation}
Moreover, $f_S$ can be extended backward in time due to time-reversibility of the inviscid equation. 
As a consequence, any statistically stationary solution $f_S$ to \eqref{eq:inviscid} is global in time, belongs to $\XX$, and satisfies
the global estimate \eqref{eq:globalest}.
\end{proof}

Besides the above features, the measure $\mu_0$ possesses an additional property that is
essential to our analysis.

\begin{lemma}\label{lem:H1bdd}
Let $\mu_0$ be a Kuskin measure and let $f_S\in \XX$ be a statistically stationary solution
to \eqref{eq:inviscid} associated to $\mu_0$. Then 
\begin{equation}\label{eq:H1bdd}
\int_{L^2}\|\zeta\|^2_{H^1}\dd\mu_0(\zeta)=\EE\, \frac1T\int_0^T \|f_S(t)\|^2_{H^1}\dd t\leq \frac12 \| \Psi\|^2,
\end{equation}
for every $T>0$. 
\end{lemma}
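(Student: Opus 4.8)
The plan is to prove the two assertions in \eqref{eq:H1bdd} separately: first the equality, which is a direct consequence of stationarity, and then the inequality, which follows from a weak lower semicontinuity argument applied to the uniform viscous bound \eqref{eq:H1balance}. Throughout, all integrands are nonnegative, so every identity may first be read in $[0,\infty]$ and the finiteness of the $H^1$-moment will only emerge at the very end.

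First I would establish the equality. Since $f_S$ is statistically stationary and associated to $\mu_0$, the law of $f_S(t)$ coincides with $\mu_0$ for every $t\geq 0$. Hence, by Tonelli's theorem,
\begin{equation*}
\EE\,\frac1T\int_0^T\|f_S(t)\|^2_{H^1}\dd t=\frac1T\int_0^T\left(\int_{L^2}\|\zeta\|^2_{H^1}\dd\mu_0(\zeta)\right)\dd t=\int_{L^2}\|\zeta\|^2_{H^1}\dd\mu_0(\zeta),
\end{equation*}
which gives the first identity independently of $T$.

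The main work is the bound $\int_{L^2}\|\zeta\|^2_{H^1}\dd\mu_0\leq\tfrac12\|\Psi\|^2$. The key input is the exact viscous identity from \eqref{eq:H1balance}, namely $\int_{L^2}\|\zeta\|^2_{H^1}\dd\mu_\nu=\tfrac12\|\Psi\|^2$ for every $\nu\in(0,1]$, together with the weak convergence $\mu_\nu\to\mu_0$ along the relevant subsequence. Because $\zeta\mapsto\|\zeta\|^2_{H^1}$ is neither bounded nor continuous in the $L^2$-topology, one cannot pass to the limit directly. Instead I would exploit that it is lower semicontinuous on $L^2$: writing $\|\zeta\|^2_{H^1}=\sup_N\|P_{\leq N}\zeta\|^2_{H^1}$ as a supremum of the $L^2$-continuous functionals $\zeta\mapsto\|P_{\leq N}\zeta\|^2_{H^1}$ exhibits it as an increasing limit of such maps. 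Concretely, for fixed $N$ and truncation level $M$ the function $\zeta\mapsto\min\{\|P_{\leq N}\zeta\|^2_{H^1},M\}$ lies in $C_b(L^2)$, so weak convergence yields
\begin{equation*}
\int_{L^2}\min\{\|P_{\leq N}\zeta\|^2_{H^1},M\}\dd\mu_0=\lim_{\nu\to0}\int_{L^2}\min\{\|P_{\leq N}\zeta\|^2_{H^1},M\}\dd\mu_\nu\leq\liminf_{\nu\to0}\int_{L^2}\|\zeta\|^2_{H^1}\dd\mu_\nu=\tfrac12\|\Psi\|^2.
\end{equation*}
Letting first $M\to\infty$ and then $N\to\infty$ and invoking the monotone convergence theorem twice gives $\int_{L^2}\|\zeta\|^2_{H^1}\dd\mu_0\leq\tfrac12\|\Psi\|^2$; in particular the $H^1$-moment is finite, so that the equality proven above is an identity of finite numbers and $\mu_0(H^1)=1$, which concludes the proof.

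Finally, I would note that the same bound can be obtained directly at the level of trajectories, closer to the construction preceding the statement: the uniform estimate \eqref{eq:stat1} of Lemma \ref{lem:stat} and \eqref{eq:H1balance} give $\EE\|f_S^\nu\|^2_{L^2(I_T;H^1)}=\tfrac{T}{2}\|\Psi\|^2$, and since the $L^2(I_T;H^1)$-norm is lower semicontinuous with respect to the weaker trajectory topology in which $\mmu_\nu\to\mmu_0$, one gets $\EE\|f_S\|^2_{L^2(I_T;H^1)}\leq\tfrac{T}{2}\|\Psi\|^2$, hence the claim after dividing by $T$. The only delicate point in either route is the passage to the limit for the unbounded, non-continuous $H^1$-functional; lower semicontinuity is precisely what makes this work, and it conveniently produces an inequality in the favorable direction for an upper bound.
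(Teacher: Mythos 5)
Your proof is correct, and the equality part (stationarity plus Tonelli) is exactly the paper's argument. For the inequality, your primary route is genuinely different from the paper's. The paper works at the level of trajectories: it applies the energy balance \eqref{eq:energy} to the viscous statistically stationary solutions $f^\nu_S$ to get $\EE\,\frac1T\int_0^T\|f^\nu_S(t)\|^2_{H^1}\dd t = \frac12\|\Psi\|^2$ for all $T>0$, and then passes to the limit using weak compactness of the lifted measures $\mmu_\nu$ on the trajectory space (Lemma \ref{lem:stat}) together with lower semicontinuity of the $L^2(I_T;H^1)$-norm in the trajectory topology --- precisely the alternative you sketch in your final paragraph. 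Your main argument instead stays entirely in the state space: it uses only the weak convergence $\mu_\nu\to\mu_0$ in $\MM(L^2)$ and the exact balance \eqref{eq:H1balance}, realizing the lower semicontinuity of $\|\cdot\|^2_{H^1}$ through the $C_b(L^2)$ truncations $\min\{\|P_{\leq N}\cdot\|^2_{H^1},M\}$ and two monotone-convergence passages. This is more elementary and self-contained: it bypasses the trajectory-space machinery entirely and delivers, as a byproduct, the finiteness of the $H^1$-moment and $\mu_0(H^1)=1$, which is exactly the hypothesis needed to invoke Corollary \ref{cor:sptinviscid}. What the paper's route buys is economy within its own framework: the trajectory-level compactness is needed anyway to construct $f_S$ and to prove invariance (Proposition \ref{prop:inviscid}), so once that machinery is in place the inequality comes essentially for free, stated directly in terms of the stationary process $f_S$ rather than the measure.
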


\begin{proof}
The equality in \eqref{eq:H1bdd} is simply a consequence of the fact that $f_S$ is a statistically stationary
solution associated to $\mu_0$. Now, for each $\nu>0$ and thanks to stationarity, \eqref{eq:energy} implies that
\begin{equation}
\EE\, \frac1T\int_0^T \|f^\nu_S(t)\|^2_{H^1}\dd t= \frac12 \| \Psi\|^2, \qquad \forall T>0.
\end{equation}
The uniformity with respect to $\nu>0$ of the above estimate together with weak compactness and lower
semicontinuity implies that
\begin{equation}
\EE\, \frac1T\int_0^T \|f_S(t)\|^2_{H^1}\dd t\leq \frac12 \| \Psi\|^2, \qquad \forall T>0.
\end{equation}
This proves \eqref{eq:H1bdd}. 
\end{proof}

The main result of this work now follows in a straightforward manner by combining 
the above Lemma \ref{lem:H1bdd} and Corollary \ref{cor:sptinviscid}.

\begin{theorem}\label{thm:kuksupp}
Let $\mu_0$ be a Kuksin measure for the linear inviscid problem \eqref{eq:inviscid}. Then
\begin{itemize}
	\item $\mu_0(L^\infty\cap H^1\cap E)=1$.
	\item $\mu_0=\mathcal{N}(0, Q_0)$, where $Q_0$ is a limit point of $\{Q_\nu\}_{\nu\in(0,1]}$ in the weak operator
	topology.
\end{itemize}
\end{theorem}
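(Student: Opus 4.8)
The plan is to prove the three assertions in turn. For the inclusion $\mu_0(H^1\cap E)=1$ I would simply verify the two hypotheses of Corollary \ref{cor:sptinviscid}: Proposition \ref{prop:inviscid} gives that $\mu_0$ is invariant under the inviscid group $\{S(t)\}$, and Lemma \ref{lem:H1bdd} gives the finite moment $\int_{L^2}\|\zeta\|_{H^1}^2\,\dd\mu_0(\zeta)\le\tfrac12\|\Psi\|^2<\infty$. Corollary \ref{cor:sptinviscid} then yields $\mu_0(H^1\cap E)=1$ with no further work.

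For Gaussianity, recall that each viscous invariant measure is the centered Gaussian $\mu_\nu=\mathcal N(0,Q_\nu)$, with characteristic functional $\widehat{\mu_\nu}(h)=\exp(-\tfrac12\langle Q_\nu h,h\rangle)$. Along the subsequence realizing $\mu_\nu\to\mu_0$ weakly, testing against the bounded continuous maps $\zeta\mapsto \e^{i\langle h,\zeta\rangle}$ gives $\widehat{\mu_\nu}(h)\to\widehat{\mu_0}(h)$ for every $h\in L^2$; hence $\langle Q_\nu h,h\rangle$ converges to a finite limit $q(h)=-2\log\widehat{\mu_0}(h)$. Polarizing the symmetric forms $\langle Q_\nu\cdot,\cdot\rangle$ shows that $\langle Q_\nu h,g\rangle$ converges for all $h,g$ to a symmetric form $\langle Q_0 h,g\rangle$ -- this is exactly convergence $Q_\nu\to Q_0$ in the weak operator topology -- and identifies $\widehat{\mu_0}(h)=\exp(-\tfrac12\langle Q_0 h,h\rangle)$, i.e. $\mu_0=\mathcal N(0,Q_0)$; the trace-class property of $Q_0$ is automatic because $\mu_0$ is a genuine probability measure on $L^2$.

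The remaining point, $\mu_0(L^\infty)=1$, is the hard one. Since the inviscid flow preserves every $L^p$ norm (being composition with the volume-preserving map $\Phi_{-t}$), it suffices to prove a bound uniform in $\nu$ and pass it to the limit. I would apply It\^o's formula to $\|f_S^\nu\|_{L^{2p}}^{2p}$ for the stationary solution: the transport term drops by incompressibility, and stationarity together with the pointwise bound $\mathfrak G(x):=\sum_k\psi_k^2 e_k(x)^2\le c\|\Psi\|^2$ yields the \emph{$\nu$-independent} balance
\begin{equation}
2\,\EE\int_{\TT^d}|f_S^\nu|^{2p-2}|\nabla f_S^\nu|^2\,\dd x=\EE\int_{\TT^d}|f_S^\nu|^{2p-2}\,\mathfrak G\,\dd x\le c\|\Psi\|^2\,\EE\|f_S^\nu\|_{L^{2p-2}}^{2p-2}.
\end{equation}
Since $\|\nabla|f_S^\nu|^p\|_{L^2}^2=p^2\int|f_S^\nu|^{2p-2}|\nabla f_S^\nu|^2$, this controls the $H^1$ norm of $|f_S^\nu|^p$ (its $L^2$ part being $\|f_S^\nu\|_{L^{2p}}^{2p}$); combined with the Sobolev embedding of $H^1$ into some $L^q$ with $q>2$, it sets up a Moser iteration, which should give $\sup_{\nu\in(0,1]}\EE\|f_S^\nu\|_{L^\infty}<\infty$. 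Lower semicontinuity of $\zeta\mapsto\|\zeta\|_{L^\infty}$ along the weak limit then gives $\int_{L^2}\|\zeta\|_{L^\infty}\,\dd\mu_0(\zeta)<\infty$, hence $\mu_0(L^\infty)=1$; intersecting with the first two conclusions completes the proof.

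The main obstacle is making this iteration rigorous. It runs under the expectation $\EE$, so the spatial $L^p$ exponents and the probabilistic moments interact, and one must use that $f_S^\nu$ is Gaussian (so that all $\omega$-moments of a fixed spatial norm are comparable) to keep the constants under control as $p\to\infty$, while tracking them to remain uniform in $\nu$ -- which is what makes the balance above, where the factor $\nu$ cancels, so useful. An alternative that avoids the iteration is to estimate the Dudley entropy integral of the stationary Gaussian field directly from the uniform bound $\mathrm{tr}((-\Delta)Q_\nu)\le\tfrac12\|\Psi\|^2$: one bounds the canonical metric $\rho(x,y)^2=\EE|f_S^\nu(x)-f_S^\nu(y)|^2$ on $\TT^d$, where the one-derivative gain is just enough for the entropy integral to converge uniformly in $\nu$, in the spirit of the Marcus--Pisier criterion already visible in the diffusion-only case.
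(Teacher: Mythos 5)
Your first two items are correct and coincide with the paper's own argument: $\mu_0(H^1\cap E)=1$ is exactly Proposition \ref{prop:inviscid} and Lemma \ref{lem:H1bdd} fed into Corollary \ref{cor:sptinviscid}, and the Gaussianity/weak-operator-topology claim is what the paper asserts, with your characteristic-functional and polarization argument correctly filling in details the paper leaves implicit (note only that finiteness of $-2\log\widehat{\mu_0}(h)$ requires $\widehat{\mu_0}(h)>0$, which follows from the uniform bound $\l Q_\nu h,h\r\le \tfrac{1}{2\lambda_1}\norm{\Psi}^2\norm{h}_{L^2}^2$ implied by \eqref{eq:H1balance}).

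The genuine gap is the claim $\mu_0(L^\infty)=1$. The paper does not prove this internally: it invokes \cite{GSV13}*{Theorem 4.2}, a uniform-in-$\nu$, instantaneous $L^2\to L^\infty$ parabolic regularization for the viscous stochastic equation, proved there by a Nash--Moser iteration that exploits the time-dependent smoothing of $S_\nu(t)$ through a Duhamel representation of the stationary solution. Your stationary It\^o balance is correct (the $\nu$-cancellation is real, and it is the natural starting point), but the iteration built on it does not close. The obstruction is precisely the one you flag and then do not resolve: with $m_{n+1}=\beta m_n$, $\beta=q/2>1$, each step controls $\EE\norm{f_S^\nu}_{L^{m_{n+1}}}^{m_n}$, i.e.\ the probabilistic moment lags the spatial exponent, and restarting the iteration requires $\EE\norm{f_S^\nu}_{L^{m_{n+1}}}^{m_{n+1}}$. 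Gaussian moment equivalence upgrades this only at a price: by Borell-type concentration, $(\EE\norm{f}_{L^{m_{n+1}}}^{m_{n+1}})^{1/m_{n+1}}\le c\,\EE\norm{f}_{L^{m_{n+1}}}+c\sqrt{m_{n+1}}\,\sigma_{n+1}$, where $\sigma_{n+1}=\sup\{(\EE\l h,f_S^\nu\r^2)^{1/2}:\norm{h}_{L^{m_{n+1}'}}\le 1\}$ is the weak variance. The $\sigma_n$ do not decay in $n$: they increase toward the supremum of the covariance kernel, which is exactly the unknown quantity. Hence the per-step losses $\sqrt{m_n}\,\sigma_n$ (equivalently, multiplicative factors $c\sqrt{\beta}>1$) accumulate to a divergent series/product, and no uniform bound on $\EE\norm{f_S^\nu}_{L^{m_n}}$ --- hence no Fatou limit --- results.

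The fallback you propose cannot work even in principle. A bound on $\mathrm{tr}\bigl((-\Delta)^{1/2}Q_\nu(-\Delta)^{1/2}\bigr)$ alone does not control the canonical metric $\rho(x,y)$, because $Q_\nu$ is not translation-invariant: the drift $\uu(x)\cdot\grad$ destroys spatial stationarity, so Marcus--Pisier (which needs independent Fourier coefficients, i.e.\ a diagonal covariance --- precisely the ``diffusion-only case'' you mention, where the argument does succeed) is unavailable. Worse, the implication is simply false at this level of generality: the rank-one Gaussian measure given by the law of $\xi g$, with $\xi$ a scalar standard normal and $g\in H^1(\TT^2)\setminus L^\infty$ fixed, satisfies $\mathrm{tr}\bigl((-\Delta)^{1/2}Q(-\Delta)^{1/2}\bigr)=\norm{g}_{H^1}^2<\infty$ yet assigns $L^\infty$ measure zero. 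So any proof of the $L^\infty$ statement must use more than Gaussianity plus the $H^1$ covariance bound: either cite \cite{GSV13} as the paper does, or reproduce a genuinely parabolic (time-dependent) De Giorgi--Nash--Moser argument for the SPDE.
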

The fact that the support of $\mu_0$ is a subset of $L^\infty$ is discussed briefly in Remark \ref{rmk:Linf} below. 
Also, $\mu_0$ is Gaussian since it is the limit of Gaussian measures. However,
in the general case the (subsequential) convergence of the covariance operators $Q_\nu$ can be only
guaranteed in the weak operator
topology.
In order to deduce further properties of $\mu_0$, such as uniqueness or more information on the support, 
one would have to prove better quantitative estimates on $Q_\nu$. As shown in the following Section
\ref{sec:examples}, this will be possible in a few specific cases in which the operator $\uu\cdot \grad-\nu\Delta$
or, equivalently, the evolution semigroup $S_\nu(t)$, is better understood.

\begin{remark} \label{rmk:Linf}
That the support of $\mu_0$ is a subset of $L^\infty$ follows from a variant of \cite{GSV13}*{Theorem 4.2}, in
which an instantaneous parabolic regularization from $L^2$ to $L^\infty$ was shown by means of
a Moser type argument. 
The proof applies to linear advection-diffusion equations with divergence-free velocity field \cite{GSV13}*{Remark 4.4} and fractional dissipation \cite{GSV13}*{Remark 4.5}. 
In \cite{GSV13}, two-dimensionality is used only to avoid the vortex-stretching term that would arise in the three-dimensional Navier-Stokes equations. 
For scalar, linear advection-diffusion equations, the generalization to higher dimensions is similar to the case of fractional dissipation. Both are based on restricting $2^\ast$ in \cite{GSV13}*{Equation (4.18)} to smaller values. For example, in the case of $-\Delta$ dissipation and dimension $d > 2$, one would need to choose $2^\ast \in (2,\frac{2d}{d-2})$. However, the proof of \cite{GSV13}*{Theorem 4.2} works for any fixed $2^\ast > 2$.   
\end{remark} 

\section{Explicit examples}\label{sec:examples}

In this section we discuss some cases where explicit computations are possible. This allows us to determine some more precise information about the Kuksin measures and, in some cases, characterize them explicitly.    

\subsection{Relaxation enhancing flows}
Our first example concerns a class of flows for which the associated Kuksin measure is trivial. 
The concept of relaxation enhancing flow was introduced in \cite{CKRZ08},
although similar issues were investigated in previous works as well \cites{BHN05,K80,K88,K90,K91}. 

\begin{definition}[Relaxation enhancing] \label{def:Relax}
An incompressible velocity 
field $\uu:\TT^d\to \RR^d$ is called relaxation enhancing if for every $\tau>0$ and $\delta>0$, there exists
$\nu_0=\nu_0(\tau,\delta)$ such that for any $\nu<\nu_0$ and any $f_0\in L^2$ we have
\begin{equation}
\| S_\nu(\tau/\nu)f_0\|_{L^2} <\delta \|f_0\|_{L^2}, \label{ineq:Relax} 
\end{equation}
where $S_\nu(t)$ denotes the semigroup associated to \eqref{eq:viscous}.  
\end{definition} 

The main result of \cite{CKRZ08} shows that relaxation enhancing flows can be identified precisely in terms of spectral properties 
of the linear operator $\uu\cdot \grad$.

\begin{theorem}[\cite{CKRZ08}*{Theorem 2.1}]\label{thm:CZRZ}
A Lipschitz continuous incompressible flow $\uu$ is relaxation enhancing 
if and only if the operator $\uu\cdot \grad$
has no eigenfunctions in $H^1$ other than the zero function.
\end{theorem}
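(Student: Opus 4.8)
The plan is to prove the two implications separately. The direction ``relaxation enhancing $\Rightarrow$ no $H^1$-eigenfunctions'' I would establish by contraposition, and it is elementary. The reverse direction ``no $H^1$-eigenfunctions $\Rightarrow$ relaxation enhancing'' is the substantive one, and I would deduce it from the growth mechanism already isolated in Theorem \ref{thm:rigidity} together with the RAGE-type estimates of Lemmas \ref{lem:CKRZ1}--\ref{lem:CKRZ2}. The main obstacle, discussed at the end, is transferring information about the \emph{unitary} flow $S(t)=\e^{iLt}$ to the \emph{viscous} flow $S_\nu(t)$ over the long time scale $\tau/\nu$.

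For the easy direction, suppose $\uu\cdot\grad$ has a nonzero eigenfunction $\varphi\in H^1$, normalized by $\|\varphi\|_{L^2}=1$, so that $L\varphi=\mu\varphi$ with $L=i\uu\cdot\grad$ and $\mu\in\RR$. Set $f(t)=S_\nu(t)\varphi$ and $h(t)=\e^{i\mu t}\varphi$, the latter solving the purely unitary equation $\dot h=iLh$. Since $\uu$ is divergence free, the energy identity
\begin{equation}
\ddt \|f(t)\|_{L^2}^2 = -2\nu\|f(t)\|_{H^1}^2
\end{equation}
holds, and the self-adjointness of $L$ makes the transport contributions in $\ddt\inp{f,h}$ cancel exactly, leaving
\begin{equation}
\ddt \inp{f(t),h(t)} = -\nu\inp{A^{1/2}f(t),A^{1/2}h(t)}, \qquad A=-\Delta.
\end{equation}
Integrating the energy identity over $[0,\tau/\nu]$ bounds $\nu\int_0^{\tau/\nu}\|f\|_{H^1}^2\,\dd t\le \tfrac12$, so Cauchy--Schwarz yields $\nu\int_0^{\tau/\nu}\|f\|_{H^1}\,\dd t\le (\tau/2)^{1/2}$. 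Since $\|h\|_{H^1}=\|\varphi\|_{H^1}$ is constant in $t$, this gives $|\inp{f(\tau/\nu),h(\tau/\nu)}-1|\le \|\varphi\|_{H^1}(\tau/2)^{1/2}$, which for $\tau$ small (depending only on $\|\varphi\|_{H^1}$) forces $\|S_\nu(\tau/\nu)\varphi\|_{L^2}\ge \tfrac12$ uniformly in $\nu$. This contradicts \eqref{ineq:Relax} with $\delta=\tfrac12$, so the flow is not relaxation enhancing.

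For the hard direction, note that the absence of $H^1$-eigenfunctions means $E=\{0\}$ in \eqref{eq:smootheigen}, so $E^\perp$ is the whole mean-zero $L^2$ and $\widetilde L=L$. I would argue by contradiction: if the flow is not relaxation enhancing, there are $\tau,\delta>0$, a sequence $\nu_n\to 0$, and unit vectors $\phi_n$ with $\|S_{\nu_n}(\tau/\nu_n)\phi_n\|_{L^2}\ge\delta$. Because the energy identity makes $\|S_{\nu_n}(t)\phi_n\|_{L^2}$ nonincreasing, one gets $\|S_{\nu_n}(t)\phi_n\|_{L^2}\ge\delta$ for all $t\in[0,\tau/\nu_n]$ together with the time-averaged bound $\nu_n\int_0^{\tau/\nu_n}\|S_{\nu_n}(t)\phi_n\|_{H^1}^2\,\dd t\le \tfrac12(1-\delta^2)$. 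Writing $T_n=\tau/\nu_n$ and using $\|(I-P_{\leq N})g\|_{H^1}^2\ge \lambda_{N+1}\|(I-P_{\leq N})g\|_{L^2}^2$, the latter says the trajectory concentrates at low frequencies in time average: $\frac1{T_n}\int_0^{T_n}\|(I-P_{\leq N})S_{\nu_n}(t)\phi_n\|_{L^2}^2\,\dd t\le \lambda_{N+1}^{-1}(1-\delta^2)/(2\tau)$, hence $\frac1{T_n}\int_0^{T_n}\|P_{\leq N}S_{\nu_n}(t)\phi_n\|_{L^2}^2\,\dd t\ge \delta^2/2$ once $N$ is large. On the other hand, for the unitary flow the combination of Lemmas \ref{lem:CKRZ1}--\ref{lem:CKRZ2} (equivalently, Theorem \ref{thm:rigidity} applied with $E^\perp=L^2$) shows that low-frequency mass is inexorably pushed into high frequencies in time average. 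Comparing $S_{\nu_n}$ with $S$ on $O(1)$ windows via Duhamel, where the dissipative correction is $O(\nu_n)$ in $H^{-1}$ and hence small on each fixed $P_{\leq N}$ band, and then chaining the windows up to total time $\tau/\nu_n$, transfers this spreading to the viscous flow and contradicts the low-frequency concentration above.

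The main obstacle is precisely this last transfer step: the two time scales ($O(1)$ for the mixing/RAGE effect and $O(1/\nu)$ for the dissipation) must be reconciled, and because dissipation destroys exact solutions of the unitary equation, the Duhamel error terms have to be controlled uniformly while being summed over $\sim 1/\nu_n$ windows. The crucial input that allows the argument to close is the \emph{uniformity} over compact sets of data in Lemmas \ref{lem:CKRZ1}--\ref{lem:CKRZ2}: the averaging times $T_c,T_p$ depend only on the compact set and not on the individual datum, which is exactly what permits the windowed comparison to be iterated without losing control of the constants.
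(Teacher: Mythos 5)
A preliminary remark on the comparison you asked for: the paper does not prove this statement at all --- it is imported as a black box from \cite{CKRZ08}*{Theorem 2.1} --- so the only meaningful benchmark is the proof in \cite{CKRZ08} itself, whose overall strategy your sketch does follow. Your easy direction is complete and correct: skew-adjointness of $\uu\cdot\grad$ cancels the transport terms in $\ddt\inp{f,h}$, the energy identity gives $\nu\int_0^{\tau/\nu}\|f\|_{H^1}^2\,\dd t\le \tfrac12$, and Cauchy--Schwarz then bounds the drift of $\inp{f,h}$ by $\|\varphi\|_{H^1}(\tau/2)^{1/2}$, contradicting \eqref{ineq:Relax} with $\delta=\tfrac12$ once $\tau$ is small depending only on $\|\varphi\|_{H^1}$; this is essentially the argument of \cite{CKRZ08}. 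Your derivation of the low-frequency concentration of the viscous trajectories under the contradiction hypothesis is also correct.

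The genuine gap is in the transfer step of the hard direction, and as written it would fail. First, the accounting behind ``chaining the windows up to total time $\tau/\nu_n$'' is wrong: each $O(1)$ window carries a comparison error that is $O(\nu_n)$ precisely because the per-window dissipation is $O(1)$, and there are $\sim \tau/(\nu_n T)$ windows, so summing the errors yields an $O(1)$ total discrepancy, which is useless for a contradiction. The proof in \cite{CKRZ08} never accumulates Duhamel errors over the whole horizon: it restarts the unitary comparison afresh at each ``good'' time $t_0$, taking the viscous state $f^{\nu_n}(t_0)$ itself as the datum for the unitary flow, and then sums \emph{dissipation lower bounds} over the windows rather than comparison errors; the contradiction comes from choosing the constant $B$ in Lemma \ref{lem:CKRZ2} large compared to $(1-\delta^2)/(\tau\delta^2)$, so that the summed dissipation exceeds the total budget $(1-\delta^2)/(2\nu_n)$ imposed by the energy identity. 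Second, you never construct the compact set on which Lemmas \ref{lem:CKRZ1}--\ref{lem:CKRZ2} (or Theorem \ref{thm:rigidity}) are to be applied; the unit vectors $\phi_n$ are merely bounded, not precompact. The correct choice is the set of renormalized viscous states at good times: by Chebyshev applied to the dissipation bound, for most $t_0\in[0,\tau/\nu_n]$ one has $\|f^{\nu_n}(t_0)\|_{H^1}\le M\|f^{\nu_n}(t_0)\|_{L^2}$, and the set $\set{\phi:\ \|\phi\|_{L^2}=1,\ \|\phi\|_{H^1}\le M}$ is compact in $L^2$ by Rellich; without this set, the uniformity of $T_c,T_p$ that you rightly single out as the crucial input has nothing to act on. These two constructions --- good times via Chebyshev, and restarted comparisons with summed dissipation lower bounds transferred through the $P_{\leq N}$ bands --- are the actual substance of the hard direction in \cite{CKRZ08}; your sketch has the right skeleton, but the missing steps are not routine.
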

In particular, weakly mixing flows \cites{F02,F06,K53,S67} -- flows such that $i \uu \cdot \grad$ has purely continuous spectrum -- are relaxation enhancing. 
Theorem \ref{thm:kuksupp} above shows that if a flow is relaxation enhancing, then there exists a \emph{unique} Kuksin measure and it is simply a single atom of unit mass at zero (in fact, this is true of all invariant measures satisfying $\mu(H^1) = 1$). This is because Theorem \ref{thm:CZRZ} implies that $E=\{0\}$.
However, due to the explicit estimate on $S_\nu(t)$ available from \eqref{ineq:Relax}, for relaxation enhancing flows we write a direct proof of the result by characterizing the covariance of the unique invariant measures $\mu_\nu = \mathcal{N}(0,Q_\nu)$, for $\nu > 0$. 
This proof will also generalize to some further examples. 

\begin{theorem} \label{thm:RelaxKuksin}
Let $\uu$ be a relaxation enhancing flow. Then $\delta_0$, the Dirac mass centered at zero, is the unique Kuksin measure for the linear inviscid evolution $S(t)$.
\end{theorem}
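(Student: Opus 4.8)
The plan is to give the \emph{direct, quantitative} argument anticipated in the discussion preceding the statement, rather than merely invoking Theorem~\ref{thm:kuksupp} together with Theorem~\ref{thm:CZRZ} (which already give $E=\{0\}$, hence triviality). The key point is that each viscous invariant measure is the centered Gaussian $\mu_\nu=\mathcal{N}(0,Q_\nu)$ with
\begin{equation}
Q_\nu = \nu\int_0^\infty S_\nu(t)\,\Psi\Psi^\ast S_\nu(t)^\ast\,\dd t,
\end{equation}
so it suffices to prove that the $L^2$-traces collapse, i.e. $\mathrm{tr}(Q_\nu)\to 0$ as $\nu\to 0$. Indeed, $\mathrm{tr}(Q_\nu)=\int_{L^2}\|\zeta\|_{L^2}^2\,\dd\mu_\nu(\zeta)$, so by Chebyshev's inequality $\mu_\nu(\{\|\zeta\|_{L^2}>\eps\})\le \mathrm{tr}(Q_\nu)/\eps^2\to 0$ for every $\eps>0$, forcing $\mu_\nu\rightharpoonup\delta_0$ weakly; since every Kuksin measure is a weak limit point of $\{\mu_\nu\}$, the unique one must be $\delta_0$.

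To estimate the trace I would first record two structural facts. Because $\|\Psi\|_{\mathrm{HS}}^2=\sum_k|\psi_k|^2=\|\Psi\|^2<\infty$, the operator $\Psi$ is Hilbert--Schmidt, and cyclicity of the trace gives
\begin{equation}
\mathrm{tr}(Q_\nu)=\nu\int_0^\infty \|S_\nu(t)\Psi\|_{\mathrm{HS}}^2\,\dd t.
\end{equation}
Second, the deterministic viscous energy identity (the $\nu\to\nu$, noise-free version of \eqref{eq:energy}, using that $\uu$ is divergence-free to annihilate the transport term) shows that $S_\nu(t)$ is an $L^2$-contraction with $\|S_\nu(t)f_0\|_{L^2}$ non-increasing in $t$; in particular $\|S_\nu(t)\Psi\|_{\mathrm{HS}}\le\|\Psi\|$ for all $t$.

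The decay then comes from the relaxation enhancing estimate \eqref{ineq:Relax}. Fix $\delta\in(0,1)$ and $\tau>0$. For all $\nu<\nu_0(\tau,\delta)$ we have $\|S_\nu(\tau/\nu)\|_{L^2\to L^2}\le\delta$, and the semigroup property yields $\|S_\nu(n\tau/\nu)\|_{L^2\to L^2}\le\delta^n$. Splitting $[0,\infty)$ into windows $[n\tau/\nu,(n+1)\tau/\nu)$ and using $\|AB\|_{\mathrm{HS}}\le\|A\|_{\mathrm{op}}\|B\|_{\mathrm{HS}}$ together with contractivity, on the $n$-th window one gets $\|S_\nu(t)\Psi\|_{\mathrm{HS}}\le\|S_\nu(n\tau/\nu)\Psi\|_{\mathrm{HS}}\le\delta^n\|\Psi\|$. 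Integrating over each window (each of length $\tau/\nu$), multiplying by $\nu$, and summing the geometric series gives the crucial $\nu$-uniform bound
\begin{equation}
\mathrm{tr}(Q_\nu)\le \nu\sum_{n=0}^\infty \frac{\tau}{\nu}\,\delta^{2n}\|\Psi\|^2=\frac{\tau\|\Psi\|^2}{1-\delta^2},\qquad \forall\,\nu<\nu_0(\tau,\delta).
\end{equation}

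The step requiring the most care is the \emph{order of limits}: the threshold $\nu_0$ depends on $\tau$, so one cannot simply send $\tau\to0$ inside a fixed-$\nu$ estimate. Instead one first takes $\limsup_{\nu\to0}$ at fixed $(\tau,\delta)$, obtaining $\limsup_{\nu\to0}\mathrm{tr}(Q_\nu)\le \tau\|\Psi\|^2/(1-\delta^2)$, and only then lets $\tau\to0$ (keeping $\delta=1/2$, say) to conclude $\mathrm{tr}(Q_\nu)\to0$. The companion bookkeeping point is that the window length $\tau/\nu$ must exactly cancel the prefactor $\nu$ in the trace formula, which is what makes the relaxation enhancing time scale $\tau/\nu$ the right one to beat; an energy-balance estimate alone only yields the $\nu$-independent Poincar\'e bound $\mathrm{tr}(Q_\nu)\le\|\Psi\|^2/(2\lambda_1)$, with no decay. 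With $\mathrm{tr}(Q_\nu)\to0$ in hand, the weak convergence $\mu_\nu\rightharpoonup\delta_0$ and hence uniqueness of the Kuksin measure follow as above.
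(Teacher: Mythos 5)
Your proof is correct, and it rests on the same two pillars as the paper's argument: the identification $\mu_\nu=\mathcal{N}(0,Q_\nu)$ with $Q_\nu=\nu\int_0^\infty S_\nu(t)\Psi\Psi^\ast S_\nu(t)^\ast\,\dd t$, and the relaxation-enhancing estimate \eqref{ineq:Relax} applied at the time scale $\tau/\nu$. The execution, however, is genuinely different in two respects. First, the paper bounds the \emph{operator norm} $\|Q_\nu\|_{L^2\to L^2}$ by splitting the time integral once at $\tau/\nu$, controlling the short-time piece with the heat decay $\|S_\nu(t)\|_{L^2\to L^2}\le \e^{-\nu\lambda_1 t}$ (it contributes $\frac{\|\Psi\|^2}{2\lambda_1}(1-\e^{-2\lambda_1\tau})$, small for small $\tau$) and the tail with the relaxation-enhancing bound combined with the same heat decay; see \eqref{eq:QnuRelaxEst}. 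You bound the \emph{trace} instead, tiling $[0,\infty)$ into windows of length $\tau/\nu$ and iterating \eqref{ineq:Relax} through the semigroup property into a geometric series $\sum_n \delta^{2n}$; this needs only contractivity of $S_\nu(t)$, not the Poincar\'e spectral gap, and since $\|Q\|_{L^2\to L^2}\le \mathrm{tr}(Q)$ for $Q\ge 0$, your conclusion is formally stronger. Second, your endgame is more self-contained: $\mathrm{tr}(Q_\nu)\to 0$ plus Chebyshev gives $\mu_\nu\rightharpoonup\delta_0$ using only second moments, whereas the bare implication ``$\|Q_\nu\|_{L^2\to L^2}\to 0$ implies $\mu_\nu\to\delta_0$'' fails for general centered Gaussian families (take $Q_n=n^{-1}P_n$ with $P_n$ a rank-$n$ projection: the operator norms vanish, yet by the law of large numbers the measures concentrate near the unit $L^2$-sphere and so cannot converge to $\delta_0$); the paper's final step is sound in context because the family $\{\mu_\nu\}$ is tight by \eqref{eq:H1balance} and weak limits of centered Gaussians are determined by the limits of $\langle Q_\nu h,h\rangle$, but your route avoids invoking either ingredient. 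What the paper's operator-norm formulation buys in exchange is reusability: the estimate \eqref{eq:QnuRelaxEst} is applied verbatim to the term $T_2^\nu$ in the proof of Theorem \ref{thm:shearchar}, where only the $E^\perp$-part of the covariance collapses and the full trace does \emph{not} tend to zero (though your windowed argument, run on the invariant subspace $E^\perp$, would adapt to that setting as well).
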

\begin{proof}
As discussed above in Section \ref{sub:Markov}, for every $\nu > 0$ the unique invariant measure for \eqref{eq:SPDEviscous} is a Gaussian $\mathcal{N}(0,Q_\nu)$
with covariance operator $Q_\nu$ given by
\begin{align*}
Q_\nu = \nu\int_0^\infty S_\nu(t)\Psi \Psi^\ast S_\nu(t)^\ast \dd t.
\end{align*}
Note that in view of the structure of $\Psi$ in \eqref{eq:SPDEviscous}, $\Psi \Psi^\ast$ is the operator 
\begin{align*}
\Psi \Psi^\ast \varphi= \sum_{k \in \Naturals} \psi_k^2 \inp{e_k,\varphi} e_k.  
\end{align*}
We proceed to show that $\|Q_\nu \|_{L^2 \rightarrow L^2} \to 0$ as $\nu \to 0$, which immediately yields the desired result.
Since $\uu$ is relaxation enhancing, by Definition \ref{def:Relax}, for all $\delta,\tau>0$, there exists $\nu_0>0$ sufficiently small such that
for all $\nu< \nu_0$
\begin{align}
\norm{S_\nu(\tau/\nu)}_{L^2 \to L^2} < \delta. \label{ineq:SnuRE}
\end{align}
Since $\norm{T}_{L^2 \to L^2} = \norm{T^\ast}_{L^2 \to L^2}$ for all bounded operators  $T:L^2 \to L^2$, we have that \eqref{ineq:SnuRE} holds also for $S_\nu(t)^\ast$. 
We also have the straightforward estimate from the heat equation which holds regardless of the velocity field $\uu$ (as long as it is incompressible), 
\begin{align}
\norm{S_\nu(t)}_{L^2 \to L^2} \leq \e^{-\nu \lambda_1 t}, \qquad \forall t\geq 0,\label{ineq:trivSnu}
\end{align}
where $\lambda_1$ is the first (non-zero) eigenvalue of the Laplacian. In particular, $S_\nu(t)$ is 
a contraction, and the estimate \eqref{ineq:SnuRE} will propagate at later times as well, namely,
for any $\tau$ and $\delta$ there exists a $\nu_0 = \nu_0(\delta, \tau) > 0$ such that
\begin{align}
\norm{S_\nu(t)}_{L^2 \to L^2} < \delta, \qquad \forall t\geq \frac{\tau}{\nu}, \label{ineq:SnuRE2}
\end{align}
for all $\nu < \nu_0$.  
Then, for any $\varphi \in L^2$ with $\|\varphi\|_{L^2} = 1$ we have 
\begin{align*}
\norm{Q_\nu\varphi}_{L^2} & \leq \nu\int_0^\infty \norm{S_\nu(t)\Psi \Psi^\ast S_\nu(t)^\ast \varphi}_{L^2} \dd t \\ 
& \leq \nu \| \Psi\|^2 \int_0^{\tau /\nu}\norm{S_\nu(t)}_{L^2 \to L^2}^2 \dd t 
+\nu \| \Psi\|^2 \int_{\tau /\nu}^\infty\norm{S_\nu(t)}_{L^2 \to L^2}^2\dd t.
\end{align*}
Using \eqref{ineq:trivSnu}-\eqref{ineq:SnuRE2} we then infer that 
\begin{align}
\norm{Q_\nu\varphi}_{L^2} & \leq \nu \| \Psi\|^2  \int_0^{\tau /\nu} \e^{-2\nu \lambda_1 t}  \dd t 
+\nu \| \Psi\|^2 \int_{\tau /\nu}^\infty \e^{-\nu \lambda_1t} \norm{S_\nu(t)}_{L^2 \to L^2}  \dd t \nonumber \\ 
&\leq \nu \| \Psi\|^2 \int_0^{\tau /\nu} \e^{-2\nu \lambda_1 t}  \dd t  +\delta\nu \| \Psi\|^2
 \int_{\tau /\nu}^\infty \e^{-\nu \lambda_1t} \dd t \nonumber \\ 
& \leq \frac{\| \Psi\|^2}{2\lambda_1}\left(1-\e^{-2\lambda_1\tau}\right) + \frac{\delta \| \Psi\|^2}{\lambda_1}\e^{-\lambda_1\tau}. \label{eq:QnuRelaxEst}
\end{align}
Fix $\eps>0$ arbitrary and choose $\tau$ such that $1 - \e^{-2\lambda_1\tau} \leq \eps$ and $\delta < \eps$. 
Then by \eqref{ineq:SnuRE}, there exists an $\nu_0 = \nu_0(\eps)$, 
\begin{align*}
\norm{Q_\nu\varphi}_{L^2}   \leq  \frac{3 \| \Psi\|^2}{2 \lambda_1}\eps, \qquad \forall \nu<\nu_0.
\end{align*}
The norm estimate on $Q_{\nu}$ follows: for all $\eps>0$, there exists a $\nu_0$ such that $\nu < \nu_0$ implies 
\begin{align*}
\norm{Q_\nu}_{L^2 \to L^2} \leq  \frac{3\| \Psi\|^2}{2 \lambda_1}\eps, 
\end{align*}
and hence 
\begin{align*}
\lim_{\nu \to 0} \norm{Q_\nu}_{L^2 \to L^2 } = 0. 
\end{align*}
Since the covariance converges in the operator norm to zero it follows that
$$
\lim_{\nu\to0}\mu_\nu=\delta_0, 
$$
completing the proof. 
\end{proof} 

\begin{remark} 
Notice that the quantitative estimate \eqref{ineq:SnuRE} plays a crucial role in the proof of Theorem \ref{thm:RelaxKuksin} described above, and highlights
the usefulness of having a more quantitative understanding of $S_\nu(t)$ for $\nu > 0$.
\end{remark} 

\subsection{General shear flows}
In this section we discuss the very simple example of shear flows in two dimensions (on $\Torus^{2}$ or a more general torus of arbitrary side length, but let us take the former for simplicity): 
\begin{align} 
\uu(x,y)= 
\begin{pmatrix} u(y) \\ 0\end{pmatrix}.  \label{def:shear}
\end{align}
It will be clear from the proof that analogous results hold also for $d$-dimensional shear flows (for $d\geq 3$) 
with similar proofs. 
To simplify the exposition, we will discuss a relatively nice class of shear flows, rather than concern ourselves with the most general of cases (surely a more general class is possible). 
\begin{definition} \label{def:nondegshear}
We say a shear flow \eqref{def:shear} is \emph{non-degenerate} provided that $u'$ is continuous and that $u'$ vanishes in at most finitely many points. 
\end{definition} 

It will be convenient to write the force in terms of the standard Fourier basis: 
\begin{align*}
\Psi \dd W_t = \sum_{(k,j) \in \Integer_*^2} \psi_{k,j} e_{k,j}\dd W^{k,j}_t, 
\end{align*} 
where $\Integer_*^2=\Integer^2 \setminus \{(0,0)\}$ and 
\begin{align*}
e_{k,j} = \frac{1}{4 \pi^2} \e^{-ikx - ijy} 
\end{align*}
and $\{W^{k,j}_t\}_{(k,j) \in \Integers_*^2}$ are independent Brownian motions. To ensure the force is real-valued, 
we naturally enforce the symmetry conditions
$$
\psi_{k,j} = \overline{\psi_{-k,-j}}, \qquad  W^{k,j}_t  = W^{-k,-j}_t.
$$
Note that despite the apparent coupling, the force can still be written as a sum of independent Brownian motions: 
\begin{align*}
\Psi \dd W_t = \sum_{j> 0}  \left(\psi_{0,j} e_{0,j} +  \psi_{0,-j} e_{0,-j}\right) \dd W^{0,j}_t+ \sum_{(k,j) \in \mathbb{Z}_*^2: k > 0}  \left(\psi_{k,j} e_{k,j} +  \psi_{-k,-j} e_{-k,-j}\right) \dd W^{k,j}_t.
\end{align*} 
We then get the following result.
\begin{theorem} \label{thm:shearchar}
Let $\uu$ be a non-degenerate shear flow in the sense of Definition \ref{def:nondegshear}. Then, for each choice of $\Psi$, the resulting Kuksin measure is given uniquely by a Gaussian $\mathcal{N}(0,Q_0)$ with covariance defined by the following: for any $\varphi \in L^2$,  
\begin{equation}
Q_0 \varphi= \sum_{j \neq 0} \frac{\abs{\psi_{0,j}}^2}{2\abs{j}^2} \l e_{0,j}, \varphi\r e_{0,j}.
\end{equation} 
\end{theorem}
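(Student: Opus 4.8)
The plan is to reduce the computation of $Q_0$ to the block structure that the shear geometry $\uu=(u(y),0)$ imposes on both the inviscid generator and the viscous semigroup, isolating the streamwise-independent modes on which $S_\nu(t)$ reduces to a pure heat flow, and controlling the remaining modes by enhanced dissipation.

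First I would identify the subspace $E$ from \eqref{eq:EE}. Expanding $\varphi=\sum_{k}\e^{ikx}\varphi_k(y)$, the eigenrelation $u(y)\de_x\varphi=i\lambda\varphi$ decouples across $k$ into $(ku(y)-\lambda)\varphi_k(y)=0$. For $k=0$ this requires $\lambda=0$ and leaves $\varphi_0$ arbitrary; for $k\neq0$ it forces $\varphi_k$ to be supported on the level set $\{y:u(y)=\lambda/k\}$. By non-degeneracy (Definition \ref{def:nondegshear}), $u$ is piecewise strictly monotone, so this level set is finite, hence of measure zero, forcing $\varphi_k=0$ in $L^2$. Thus $E=\overline{\mathrm{span}\{e_{0,j}:j\neq0\}}^{L^2}$ is exactly the closed span of the shear-invariant modes, and the restriction $\widetilde L=(\uu\cdot\grad)|_{E^\perp}$ has no $H^1$ eigenfunctions; by Theorem \ref{thm:CZRZ} it is relaxation enhancing on $E^\perp$.

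The structural point is that, since neither $u(y)$ nor $-\nu\Delta$ couples distinct streamwise frequencies, $S_\nu(t)$ leaves every $x$-Fourier block invariant, and in particular preserves both $E$ and $E^\perp$. Because $\Psi\Psi^\ast$ is diagonal in $\{e_{k,j}\}$, the covariance $Q_\nu=\nu\int_0^\infty S_\nu(t)\Psi\Psi^\ast S_\nu(t)^\ast\dd t$ splits orthogonally as $Q_\nu=Q_\nu^E\oplus Q_\nu^{E^\perp}$. On $E$ the semigroup is the pure heat flow $\e^{\nu t\de_y^2}$, diagonal with $S_\nu(t)e_{0,j}=\e^{-\nu j^2 t}e_{0,j}$, so a direct integration gives
\begin{equation*}
Q_\nu^E e_{0,j}=\nu\,|\psi_{0,j}|^2\int_0^\infty \e^{-2\nu j^2 t}\dd t\; e_{0,j}=\frac{|\psi_{0,j}|^2}{2|j|^2}\,e_{0,j},
\end{equation*}
which is precisely the claimed $Q_0$ and, crucially, independent of $\nu$.

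It remains to show $Q_\nu^{E^\perp}\to 0$, and this is the main obstacle. The idea is to run the argument of Theorem \ref{thm:RelaxKuksin} verbatim on the invariant subspace $E^\perp$: every mode there has $k\neq0$, so the bottom $\lambda_1'>0$ of the spectrum of $-\Delta$ on $E^\perp$ is strictly positive, yielding the contraction bound $\|S_\nu(t)\|_{E^\perp\to E^\perp}\leq\e^{-\nu\lambda_1' t}$, while the relaxation-enhancing property of $\widetilde L$ supplies $\|S_\nu(\tau/\nu)\|_{E^\perp\to E^\perp}<\delta$ for $\nu$ small. The estimate \eqref{eq:QnuRelaxEst} then carries over and forces $\|Q_\nu^{E^\perp}\|_{L^2\to L^2}\to 0$. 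The delicate step to justify carefully is exactly that the relaxation-enhancing bound holds on the restricted space uniformly in $\nu$: one must check that the abstract criterion of \cite{CKRZ08} applies to the pair $(\widetilde L,(-\Delta)|_{E^\perp})$, which is legitimate since both operators respect the $x$-Fourier block decomposition and $\widetilde L$ has no $H^1$ eigenfunctions. Combining the two blocks gives $Q_\nu=Q_0\oplus Q_\nu^{E^\perp}\to Q_0$ in operator norm, so $Q_0$ is the unique weak-operator limit point of $\{Q_\nu\}_{\nu\in(0,1]}$; invoking Theorem \ref{thm:kuksupp}, which already identifies every Kuksin measure as a centered Gaussian whose covariance is such a limit point, yields that the Kuksin measure is unique and equals $\mathcal{N}(0,Q_0)$.
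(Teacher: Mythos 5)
Your proposal is correct and follows essentially the same route as the paper: identify $E$ as the $x$-independent modes via the Fourier decoupling $(\lambda-ku(y))\hat\varphi(k,y)=0$ and non-degeneracy, observe that $E$ and $E^\perp$ are invariant for $S_\nu(t)$ (your Fourier-block argument is the paper's observation that $\Pi_e$ commutes with $\Delta$), compute the heat-flow block explicitly to get the $\nu$-independent $Q_0$, and kill the $E^\perp$ block by rerunning the relaxation-enhancing estimate \eqref{eq:QnuRelaxEst} on the restricted pair $(\widetilde L,(-\Delta)|_{E^\perp})$. Your explicit appeal to Theorem \ref{thm:kuksupp} at the end, and your flagging of the need for the abstract CKRZ criterion on the invariant subspace, merely make explicit steps the paper treats implicitly.
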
  
\begin{remark} 
Notice that even though \eqref{def:shear} is not relaxation enhancing, if $\psi_{0,j} = 0$ for all $j$ then the Kuksin measure is still the Dirac mass $\delta_0$.   
\end{remark}

\begin{proof} 
First we prove that the only $L^2$ eigenfunctions for $L = i\uu \cdot \grad$ with $\uu$ of the form \eqref{def:shear}  are independent of $x$.   
To see this, suppose there existed some $\varphi \in L^2$ such that 
$$
\uu \cdot \grad \varphi = u \partial_x \varphi = i\lambda \varphi,  \qquad\lambda\in \RR
$$
in the sense of distributions but which is not independent of $x$. Taking the Fourier transform with respect to $x$ implies the following almost everywhere in $y$ and $k \neq 0$:  
\begin{align*}
0 = \left(\lambda - k u(y)\right) \hat{\varphi}(k,y). 
\end{align*}
By the hypotheses of non-degeneracy and the mean-value theorem, it follows that $u(y)$ can only take the same value finitely many times, and hence this identity can only be satisfied 
if $\hat{\varphi}(k,y) = 0$ almost everywhere for all $k$ non-zero. 
Consequently, the only possible $H^1$ eigenfunctions are independent of $x$ (almost everywhere) and are all zero eigenfunctions of the operator $u(y) \partial_x$. Therefore, 
$$
E = \set{\varphi \in L^2 : \varphi(x,y) = \varphi(y) \quad \textup{a.e.}}
$$ 
and hence the projection $\Pi_e : L^2 \rightarrow E$ is simply given  by
$$
(\Pi_e \varphi)(x,y) =\int_{\TT} \varphi(x,y) \dd x. 
$$
Moreover, by Theorem \ref{thm:CZRZ}, it follows that if one restricts $\widetilde{L} = L|_{E^\perp}$, with 
$$
E^\perp = \set{\varphi\in L^2 : \int_{\TT} \varphi(x,y) \dd x = 0 \quad \textup{a.e.}},
$$ 
then $\widetilde{L}$ is relaxation enhancing since it has a purely continuous spectrum. 

Next, because $\Pi_e$ and $\Delta$ commute, it follows that
the solution $f^\nu(t)=S_\nu(t)f_0$ of the deterministic viscous problem \eqref{eq:viscous} satisfies 
\begin{align*}
\partial_t \Pi_e f^\nu & = \nu\Delta \Pi_e f^\nu, \qquad \Pi_e f^\nu(0)=\Pi_e f_0
\end{align*}
and so $E$ is an invariant subspace also for $S_\nu(t)$ and not just $S(t)$ -- this is the crucial point of the proof.  
If we denote $\widetilde{S}_\nu(t) = S_{\nu}(t)|_{E^\perp}$, then since $\widetilde{L} = L_{E^\perp}$ is relaxation enhancing,  from Definition \ref{def:Relax} we have that
for every $\tau>0$ and $\delta>0$, there exists
$\nu_0=\nu_0(\tau,\delta)$ such that for any $\nu<\nu_0$ and any $f_0 \in E^\perp$, 
\begin{equation}
\| \widetilde{S}_\nu(\tau/\nu)f_0\|_{L^2} <\delta \|f_0 \|_{L^2}.
\end{equation}
Finally, notice by linearity that
$$
S_{\nu}(t)f_0 = S_{\nu}(t)\Pi_e f_0 + S_{\nu}(t)(I - \Pi_e)f_0,
$$
however, by the above invariants, we also have
 \begin{align*}
\Pi_e S_{\nu}(t)f_0  &= S_{\nu}(t)\Pi_e f_0 = \e^{\nu \partial_{yy}t}\Pi_e f_0 \\ 
(I -\Pi_e)S_{\nu}(t)f_0 & = S_{\nu}(t)(I-\Pi_e)f_0 = \widetilde{S}_{\nu}(t)f_0  
\end{align*}
and the same holds for $S_{\nu}(t)^\ast$. 
Denote for any $\varphi \in L^2$ the operator
\begin{align*}
\Psi \Psi^\ast \varphi = \sum_{(k,j) \in \Integers_*^2} |\psi_{k,j}|^2 \l e_{k,j},\varphi\r e_{k,j}.   
\end{align*}
Therefore, to compute the covariance, for any $\varphi \in L^2$ we have 
\begin{align*}
Q_{\nu} \varphi & = \nu\int_0^\infty S_{\nu}(t) \Psi \Psi^\ast S_{\nu}(t)^\ast \varphi \dd t \\
& = \nu \int_0^\infty \sum_{(k,j) \in \Integer_*^2}|\psi_{k,j}|^2 \l e_{k,j}, S_{\nu}(t)^\ast \varphi\r S_{\nu}(t) e_{k,j} \dd t \\ 
& = \nu \int_0^\infty \sum_{(k,j) \in \Integer_*^2} |\psi_{k,j}|^2 \l S_{\nu}(t) e_{k,j},\varphi\r   S_{\nu}(t) e_{k,j} \dd t \\   
& = \nu\sum_{j\neq 0} \int_0^\infty |\psi_{0,j}|^2 \l S_{\nu}(t) e_{0,j},\varphi\r   S_{\nu}(t) e_{0,j} \dd t   
 + \nu \int_0^\infty \sum_{(k,j) \in \Integer^2:k\neq 0} |\psi_{k,j}|^2 \l S_{\nu}(t) e_{k,j},\varphi\r   S_{\nu}(t) e_{k,j} \dd t \\   
& = \nu\sum_{j \neq 0} |\psi_{0,j}|^2 \int_0^\infty \l\e^{\nu t \partial_{yy}} e_{0,j}, \varphi\r \e^{\nu t \partial_{yy}}  e_{0,j} \dd t
 + \nu\int_0^\infty \sum_{(k,j) \in \Integer^2:k\neq 0} |\psi_{k,j}|^2 \l\widetilde{S}_{\nu}(t)e_{k,j},\varphi\r \widetilde{S}_{\nu}(t) e_{k,j} \dd t \\ 
& := T_1 \varphi + T_2^\nu \varphi.
\end{align*}
The first term, $T_1$, is independent of $\nu$. 
Indeed, since $e_{0,j}$ are eigenfunctions of the heat operator: 
\begin{equation} \label{eq:charT1}
T_1 \varphi  = \nu\sum_{j\neq 0} |\psi_{0,j}|^2 \int_0^\infty \e^{-2 \nu |j|^2 t} \l e_{0,j}, \varphi\r e_{0,j} \dd t 
= \sum_{j\neq 0} \frac{|\psi_{0,j}|^2}{2|j|^2} \l e_{0,j},\varphi\r e_{0,j}. 
\end{equation}
On the other hand, because $\widetilde{S}_\nu(t)$ is relaxation enhancing, the latter term is estimated precisely as in \eqref{eq:QnuRelaxEst} in the proof of Theorem \ref{thm:RelaxKuksin}. Hence, we may deduce as above that 
\begin{align*}
\lim_{\nu \rightarrow 0}\norm{T^\nu_2}_{L^2 \rightarrow L^2} = 0,
\end{align*}
and therefore
\begin{align*}
\lim_{\nu \rightarrow 0}\norm{Q_\nu - T_1}_{L^2 \rightarrow L^2} = 0, 
\end{align*}
completing the proof. 
\end{proof} 

\subsection{Non-degenerate Cellular flows} 
In this section we discuss one last example in which we can get some additional regularity and other kinds of information due to the rigidity of $E$, even if we cannot determine  the invariant measures precisely. 
\begin{definition}
For a smooth streamfunction $\psi$, we say $\uu = \grad^\perp \psi$ is a \emph{non-degenerate cellular flow} if $\Torus^2$ can be tiled with a finite number of open, disjoint, curvilinear polygons $\mathcal{P}_i$ (referred to as \emph{cells}) whose boundaries are smooth except at the vertices, such that the following holds:  
\begin{itemize} 
\item $\displaystyle\Torus^2 = \bigcup_i \overline{\mathcal{P}_i}$; 
\item inside each polygon $\mathcal{P}_i$, there is a unique fixed 
point $\boldsymbol{x}^e_i \in \mathcal{P}_i$ and we assume that the 
other level curves $C_i(z) = \set{\boldsymbol{x} \in \mathcal{P}_i: \psi(\boldsymbol{x}) = z}$ 
are smooth curves which are diffeomorphic to concentric circles away from the edges of $\mathcal{P}_i$, that is, there exists a homeomorphism $\mathcal{M}_i:\mathcal{P}_i \rightarrow \mathbb D$ (where $\mathbb D$ denotes the unit disk) which is a diffeomorphism away from the edges of $\mathcal{P}_i$ such that there is some strictly monotone function $r(z)$ for which we have  $\mathcal{M}_i(C_i(z)) = \set{\boldsymbol{x} \in \mathbb D: \abs{\boldsymbol{x}} = r(z)}$; 
\item the vertices of the polygons, denoted $\boldsymbol{x}^h_j$, are fixed points; 
\item the edges of the polygons are smooth streamlines which form a simply-connected network of heteroclinic connections between the vertices $\boldsymbol{x}^h_j$; 
\item the following non-degeneracy condition holds on each polygon:
\begin{align*}
\partial_z \left(\int_{C_i(z)} \frac{1}{\abs{\grad \psi(\boldsymbol{x})}}\dd\ell \right) = 0
\end{align*}
in at most finitely many points. 
\end{itemize} 
\end{definition}
 
\begin{center}
\begin{figure}[hbpt] 
\includegraphics[scale=0.5]{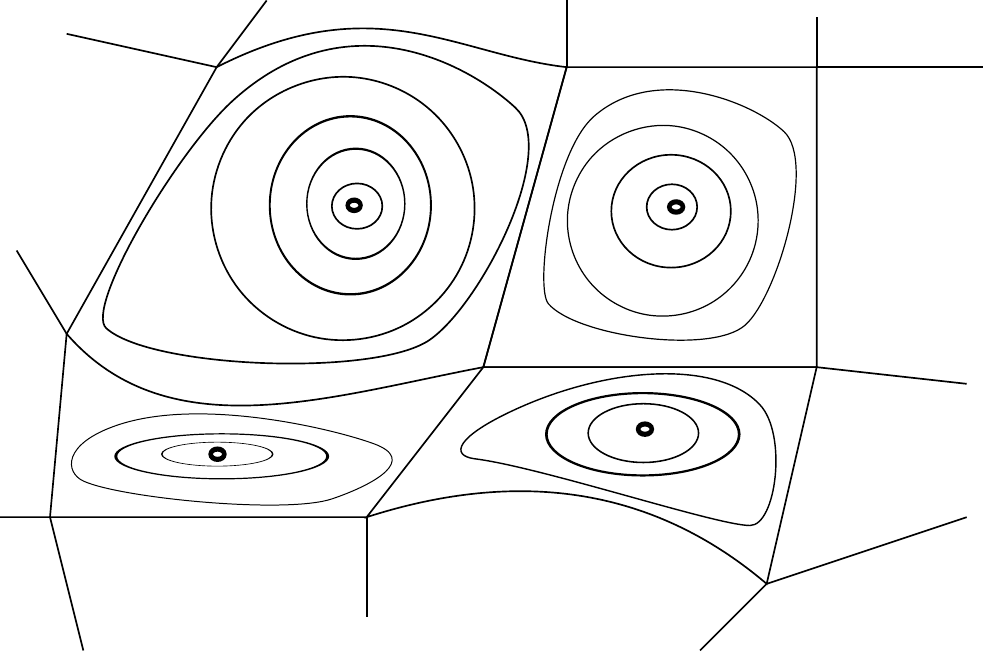}
\caption{A typical cellular flow.} \label{fig:cell}
\end{figure}
\end{center}

In what follows, denote the functions
\begin{align*}
T_{i}(z) = \int_{C_i(z)} \frac{1}{\abs{\grad \psi(\boldsymbol{x})}}\dd\ell, 
\end{align*}
which are the period of the orbit with ``energy'' level $z$.   
Further, denote the set of all fixed points as the disjoint union $F = F_e\cup F_h$, where $F_e = \set{\boldsymbol{x}_1^e,\ldots,\boldsymbol{x}_j^e}$
is the set of fixed points in the interior of the cells and $F_h = \set{\boldsymbol{x}_1^h,\ldots,\boldsymbol{x}_m^h}$ is the set of fixed points comprising the vertices.
We refer to the set $F_h$ together with the heteroclinic connections as the \emph{edge-vertex network}. See Figure \ref{fig:cell} for a schematic of a typical cellular flow. 
As above, denote 
\begin{align*}
E= \overline{\mbox{span}\big\{\varphi\in H^1: i \uu \cdot \grad \varphi= \lambda \varphi, \ \lambda\in \RR \big\}}^{L^2}. 
\end{align*}
Clearly, there are many non-smooth eigenfunctions corresponding to the zero eigenvalue 
(such as functions which are constant over one cell and zero elsewhere), however, the Kuksin 
measures are also supported on $H^1$, which together with the form of the eigenfunctions, 
imposes additional rigidity. In particular we have the following theorem. 
\begin{theorem} \label{thm:cell}
Let $\uu$ be a non-degenerate cellular flow on $\Torus^2$. 
Then
\begin{align*}
E \cap H^1 = \set{\varphi \in H^1: \uu \cdot \grad \varphi = 0 \quad a.e.},   
\end{align*}
and it follows that all $\varphi \in E \cap H^1$ are constant along streamlines, continuous 
on $\Torus^2 \setminus F_e$, $C^{1/2}$ on every compact set $K$ with 
$K \cap F = \emptyset$, and attain the same value everywhere in the edge-vertex network.
Finally, it follows from Theorem \ref{thm:ee} that all Kuksin measures $\mu_0$ associated to the flow $\uu$ satisfy $\mu_0(E \cap H^1) = 1$.
\end{theorem}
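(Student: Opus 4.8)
My plan is to pin down $E\cap H^1$ exactly and then extract all the regularity from the $H^1$ bound together with the geometry of the level sets of $\psi$. The first step is to rule out nonzero eigenvalues, exactly as in the shear case: suppose $\uu\cdot\grad\varphi=-i\lambda\varphi$ with $\varphi\in L^2$ and $\lambda\neq0$. Inside a cell $\mathcal{P}_i$ the flow is periodic, and along the orbit $C_i(z)$ of period $T_i(z)$ one has $\varphi(\Phi_t(x))=\e^{-i\lambda t}\varphi(x)$; single-valuedness on the closed orbit forces $\lambda T_i(z)\in 2\pi\ZZ$ on every orbit where $\varphi\not\equiv0$. By the non-degeneracy hypothesis $\partial_z T_i$ vanishes only finitely often, so $T_i$ assumes each value finitely many times and the admissible set of $z$ is countable, hence of measure zero in $x$ by the coarea formula. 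Thus $\varphi=0$ a.e., and the only $H^1$ eigenfunctions have $\lambda=0$. Writing $V=\set{\varphi\in H^1:\uu\cdot\grad\varphi=0\ \text{a.e.}}$ we obtain $E=\overline{V}^{L^2}$, and for the inclusion $E\cap H^1\subset V$ I would take $\varphi\in E\cap H^1$ with $\varphi_n\to\varphi$ in $L^2$, $\varphi_n\in V$, and use that $\uu$ is divergence free: for every smooth $\phi$,
\begin{equation}
\l\uu\cdot\grad\varphi,\phi\r=-\l\varphi\,\uu,\grad\phi\r=-\lim_n\l\varphi_n\,\uu,\grad\phi\r=\lim_n\l\uu\cdot\grad\varphi_n,\phi\r=0,
\end{equation}
so $\uu\cdot\grad\varphi=0$ a.e.\ and $E\cap H^1=V$.

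Next I would fix $\varphi\in V$. Constancy along streamlines means $\varphi=g_i\circ\psi$ on each cell $\mathcal{P}_i$; this is the ``constant along streamlines'' assertion, and $\grad\varphi=g_i'(\psi)\grad\psi$. The coarea formula then yields the energy identity
\begin{equation}
\int_{\mathcal{P}_i}\abs{\grad\varphi}^2\dd x=\int |g_i'(z)|^2 L_i(z)\,\dd z<\infty,\qquad L_i(z):=\int_{C_i(z)}\abs{\grad\psi}\,\dd\ell,
\end{equation}
so that by Cauchy--Schwarz, for $z_1<z_2$ in the range of $\psi$ on $\mathcal{P}_i$,
\begin{equation}
\abs{g_i(z_2)-g_i(z_1)}\leq \norm{\grad\varphi}_{L^2(\mathcal{P}_i)}\left(\int_{z_1}^{z_2}\frac{\dd z}{L_i(z)}\right)^{1/2}.
\end{equation}

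The behavior of $L_i$ is the crux. Near the separatrix value $z^\ast$ attained on $\partial\mathcal{P}_i$ I expect $L_i(z)$ to stay bounded below by a positive constant (the divergence there is confined to $T_i$, which grows only logarithmically), while near the elliptic center $L_i(z)\to0$ because the orbits shrink to the point $\boldsymbol{x}^e_i$. Thus $\int^{z^\ast}\dd z/L_i<\infty$ and $g_i$ extends $1/2$-H\"older-continuously up to $z^\ast$, whereas no such control survives at the center --- explaining why $F_e$ must be excluded. On a compact $K$ with $K\cap F=\emptyset$ both $\abs{\grad\psi}$ and $L_i$ are bounded below, so $g_i$ is $1/2$-H\"older in $z$; composing with the Lipschitz map $\psi$ gives the $C^{1/2}$ statement on $K$ (handling the finitely many edge crossings inside $K$ via the matching in the next step).

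It then remains to glue across the edge-vertex network. On each edge $\varphi$ is constant, equal to the one-sided limit $g_i(z^\ast)$ from each adjacent cell. Since each edge is a smooth curve away from its vertices and $\varphi\in H^1(\TT^2)$, the two traces must coincide --- a genuine jump would place a singular measure in $\grad\varphi$, violating $\grad\varphi\in L^2$ --- so $g_i(z^\ast)=g_j(z^\ast)$ across adjacent cells and $\varphi$ is continuous across every edge. Because the network is connected and the shared value at each vertex is forced by continuity of these boundary limits, all edge values agree: $\varphi\equiv c$ on the edge-vertex network, and $\varphi$ is continuous on $\TT^2\setminus F_e$. The final assertion $\mu_0(E\cap H^1)=1$ is immediate from Theorem~\ref{thm:ee}, since every Kuksin measure arises as a small-noise inviscid limit. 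The main obstacle is precisely this geometric analysis near the network: verifying that $L_i$ remains bounded below along the hyperbolic edges while degenerating at the centers, and carrying out the trace-matching at the singular vertices where $\abs{\grad\psi}\to0$.
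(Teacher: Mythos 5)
Your proposal is correct, and it shares the paper's overall skeleton: exclude nonzero eigenvalues via the period function $T_i$ and the non-degeneracy hypothesis, identify $E\cap H^1$ with the kernel of $\uu\cdot\grad$, upgrade ``$H^1$ plus constancy along streamlines'' to H\"older continuity, edge matching, and constancy on the edge-vertex network, and conclude by citing Theorem \ref{thm:ee}. The implementation, however, differs at the analytic steps, and the comparison is instructive. Where the paper disposes of the identification by remarking that every $\varphi\in E\cap H^1$ ``is itself necessarily an $H^1$ eigenfunction'' (implicitly: the $L^2$-kernel is closed, so $E\subset\ker(\uu\cdot\grad)$), you give the explicit approximation argument using that $\uu$ is divergence-free; and your quantization $\lambda T_i(z)\in 2\pi\ZZ$, proved for $L^2$ eigenfunctions, is the precise form of the condition the paper states (somewhat loosely, and only for $H^1$ eigenfunctions). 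The substantive divergence is the regularity step: the paper straightens streamlines in local flow boxes away from $F$ and invokes the one-dimensional Morrey embedding $H^1_{loc}(\RR)\hookrightarrow C^{1/2}_{loc}(\RR)$; since a flow box may straddle an open edge, continuity across edges comes for free, and the existence of the one-sided limits $g_i(z^\ast)$ follows softly from continuity at edge points together with constancy of $\varphi$ along the orbits $C_i(z)$, which accumulate on the whole cell boundary. You instead work globally in each cell via the coarea identity and Cauchy--Schwarz against the weight $L_i(z)$, which obliges you to prove the lower bound on $L_i$ near the separatrix that you flag as the ``main obstacle.'' That bound is true and not hard (fix a compact sub-arc $\gamma$ of an open edge on which $\abs{\grad\psi}\geq\delta$; for $z$ near $z^\ast$ the level curve $C_i(z)$ contains an arc comparable to $\gamma$ inside a flow box around $\gamma$, so $L_i(z)\geq c\,\delta$), so your route does close. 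What it buys is quantitative information: a modulus of continuity up to the separatrix controlled by $\int^{z^\ast}\dd z/L_i(z)$, and the explanation, via $L_i\to 0$ at the centers, of why continuity may genuinely fail on $F_e$. What the paper's soft argument buys is brevity: it sidesteps exactly the geometric estimates you identified as the hard part. Your trace/jump argument for matching across edges (an $H^1$ function cannot jump across a smooth curve without putting a singular measure into its gradient) replaces the paper's Morrey continuity there; both are valid, and the remaining steps---connectedness of the network and the appeal to Theorem \ref{thm:ee}---coincide with the paper's.
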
   
\begin{remark}\label{rmk:ga} 
Theorem \ref{thm:cell} is, heuristically at least, consistent (though much less precise) with the results of Iyer and Novikov in \cite{IN15}, which show that information travels along the edge-vertex network much faster than it travels across streamlines on the interior of the cells and so rapidly homogenizes near the edge-vertex network on time-scales faster than $\nu^{-1}$.   
\end{remark} 
\begin{remark}\label{rmk:fried} 
One can imagine extending Theorem \ref{thm:cell} to wider classes of 2D flows which satisfy suitable non-degeneracy conditions (for example, those studied in \cites{FR02,FW94}). 
\end{remark}
\begin{proof}[Proof of Theorem \ref{thm:cell}]
First note that the streamlines of the flow are precisely the level curves of $\psi$. 
The first step is to prove that all $H^1$ eigenfunctions of $L = i\uu \cdot \grad$ are constant along streamlines almost everywhere 
using a variant of the argument employed in Theorem \ref{thm:shearchar}.  
Suppose $\varphi \in H^1$ is such that 
\begin{align*} 
\grad^\perp \psi \cdot \grad \varphi = i\lambda \varphi,  \qquad\lambda\in \RR
\end{align*} 
in the sense of distributions. 
Consider the cell $\mathcal{P}_j$ and draw a smooth curve $\ell(\tau)$, $\tau\in [0,1]$, 
connecting $\boldsymbol{x}_j^e$ with an edge of the polygon such that $\ell(\tau)$ intersects
each level curve $C_j(z)$ at a single point, which we denote by $\boldsymbol{x}_z$. 
Note this is always possible due to the assumption that the level curves be diffeomorphic to concentric circles: indeed, draw a line from the origin to 
the point $(0,1)$, and then map this line back to $\mathcal{P}_j$ using $\mathcal{M}_j$ and take the resulting curve as $\ell(\tau)$.  
Define
\begin{align*}
\ddt\Phi_t(z)=\grad^\perp \psi (\Phi_t(z)), \qquad \Phi_0(z)=\boldsymbol{x}_z.
\end{align*}
and 
\begin{align*}
h(t,z) = \varphi(\Phi_t(z)), \qquad 
\end{align*} 
defined for $z$ between $\psi(\boldsymbol{x}_j^e)$ and the value of $\psi$ on the edge of the cell; let us denote this range $z\in (z_0,z_1)$.  
Due to the regularity properties of the streamlines, it follows that $\Phi_t(z)$ is a smooth function (for $z$ away from $z_0$ and $z_1$) and hence $h(t,z)$ is $H^1$ away from from $z_0$, $z_1$.  
Hence, for almost every $z \in (z_0,z_1)$, $h(t,z)$ is periodic with period $T_j(z)$ and from the chain rule we have
\begin{align*}
\partial_t h(t,z) = i\lambda h(t,z) 
\end{align*} 
in the sense of distributions. 
For each $z$, it follows that either $\lambda = 0$ (and hence $h(t,z)$ is constant in $t$) or $\lambda$ must be an integer multiple of $T_j(z)$. 
However, by the non-degeneracy hypotheses, $T_j(z)$ can take the same value only finitely many times and hence $h(t,z)$ must be constant in $t$ for almost every $z$. 
It follows then that $\varphi$ is constant along streamlines within the cells.  
Since the edge-vertex network is a measure-zero set, we therefore have that all $H^1$ eigenfunctions are constant along streamlines almost everywhere. 
It further follows that all $H^1$ eigenfunctions have eigenvalue zero. 

Next, let $\varphi \in E \cap H^1$ be arbitrary. 
Since all of the $H^1$ eigenfunctions correspond to the same eigenvalue (zero), it follows that $\varphi$ is itself necessarily an $H^1$ eigenfunction.
We have thus deduced that 
\begin{align*}
E \cap H^1 = \set{\varphi \in H^1: \uu \cdot \grad \varphi = 0 \quad a.e.}.  
\end{align*}
Using diffeomorphisms to locally straighten the streamlines, we see that because $\varphi$ is constant along streamlines, it follows that $\varphi$ must be $C^{1/2}$ away from $F$ (the set of fixed points) by Morrey's theorem 
$H^1_{loc}(\Real) \hookrightarrow C^{1/2}_{loc}(\Real)$.  
That is, $\varphi$ is $C^{1/2}$ in any compact set $K$ such that $K \cap F = \emptyset$.  
Further, by continuity and taking limits along trajectories (along which $\varphi$ is constant) from the interior of the cell, we see that $\varphi$ is constant on the heteroclinic connections between the vertices and takes the same value on any two heteroclinic connections which bound the same cell. 
Therefore, up to a measure zero alteration, we can take $\varphi$ that same value at the vertices and so $\varphi$ is continuous on every compact set which does not intersect $F_e$. 
Finally, by continuity and the connectedness of the edge-vertex network, we see that $\varphi$ must attain the same value everywhere in the entire edge-vertex network. 
\end{proof}

\subsubsection*{Acknowledgments}
The authors would like to thank Yuanyuan Feng, Pierre Germain, Gautam Iyer, Alex Kiselev, Nader Masmoudi, Vlad Vicol, Frederi Viens, and Andrej Zlato\v{s} for helpful discussions. JB was partially supported by NSF grant DMS-1413177. NGH was partially supported by NSF-DMS-1313272. 

\begin{bibdiv}
\begin{biblist}

\bib{AT11}{article}{
   author={Alexakis, A.},
   author={Tzella, A.},
   title={Bounding the scalar dissipation scale for mixing flows in the presence of sources},
   journal={J. Fluid Mech.},
   volume={688},
   date={2011},
   pages={443--460},
}

\bib{BBG01}{article}{
   author={Bajer, Konrad},
   author={Bassom, Andrew P.},
   author={Gilbert, Andrew D.},
   title={Accelerated diffusion in the centre of a vortex},
   journal={J. Fluid Mech.},
   volume={437},
   date={2001},
   pages={395--411},
}

\bib{BW13}{article}{
   author={Beck, Margaret},
   author={Wayne, C. Eugene},
   title={Metastability and rapid convergence to quasi-stationary bar states
   for the two-dimensional Navier-Stokes equations},
   journal={Proc. Roy. Soc. Edinburgh Sect. A},
   volume={143},
   date={2013},
   pages={905--927},
}

\bib{BM13}{article}{
   author={Bedrossian, J.},
   author={Masmoudi, N.},
   title={Inviscid damping and the asymptotic stability of planar shear flows in the 2D Euler equations},
   journal = {Publ. Math. de l'IHES (in print)},
   date={2015},
}

\bib{BMV14}{article}{
   author={Bedrossian, J.},
   author={Masmoudi, N.},
   author={Vicol, V.},   
   title={Enhanced dissipation and inviscid damping in the inviscid limit of 
   the Navier-Stokes equations near the 2D Couette flow},
   journal = {ArXiv e-prints},
   eprint = {1408.4754},
   date={2014},
}

\bib{BHN05}{article}{
   author={Berestycki, Henri},
   author={Hamel, Fran{\c{c}}ois},
   author={Nadirashvili, Nikolai},
   title={Elliptic eigenvalue problems with large drift and applications to nonlinear propagation phenomena},
   journal={Comm. Math. Phys.},
   volume={253},
   date={2005},
   pages={451--480},
}

\bib{CKRZ08}{article}{
   author={Constantin, P.},
   author={Kiselev, A.},
   author={Ryzhik, L.},
   author={Zlato{\v{s}}, A.},
   title={Diffusion and mixing in fluid flow},
   journal={Ann. of Math. (2)},
   volume={168},
   date={2008},
   pages={643--674},
}

\bib{DGT11}{article}{
	author = {Debussche, A.},
	author = {Glatt-Holtz, N.},
	author = {Temam, R.},
	journal={Physica D},
   	volume={240},
   	date={2011},
   	pages={1123-1144},
}

\bib{DZ92}{book}{
	author = {Da Prato, G},
	author = {Zabczyk, J.},
	title = {Stochastic equations in infinite dimensions},
	publisher={Cambridge University Press, Cambridge},
	date={1992},
}

\bib{DZ96}{book}{
   author={Da Prato, G.},
   author={Zabczyk, J.},
   title={Ergodicity for infinite-dimensional systems},
   publisher={Cambridge University Press, Cambridge},
   date={1996},
}

\bib{F02}{article}{
   author={Fayad, Bassam},
   title={Weak mixing for reparameterized linear flows on the torus},
   journal={Ergodic Theory Dynam. Systems},
   volume={22},
   date={2002},
   pages={187--201},
}

\bib{F06}{article}{
   author={Fayad, Bassam},
   title={Smooth mixing flows with purely singular spectra},
   journal={Duke Math. J.},
   volume={132},
   date={2006},
   pages={371--391},
}

\bib{FG95}{article}{
	author = {Flandoli, F. },
	author = {Gatarek, D.},
	title = {Martingale and stationary solutions for stochastic {N}avier-{S}tokes equations},
	journal = {Probab. Theory Related Fields},
	volume = {102},
	date = {1995},
	pages = {367--391},
	}

\bib{FR02}{article}{
   author={Freidlin, Mark},
   title={Reaction-diffusion in incompressible fluid: asymptotic problems},
   journal={J. Differential Equations},
   volume={179},
   date={2002},
   pages={44--96},
}

\bib{FW94}{article}{
   author={Freidlin, Mark I.},
   author={Wentzell, Alexander D.},
   title={Random perturbations of Hamiltonian systems},
   journal={Mem. Amer. Math. Soc.},
   volume={109},
   date={1994},
   pages={viii+82},
}

\bib{GSV13}{article}{
   author={Glatt-Holtz, N.},
   author={Sverak, V.},
   author={Vicol, V.},   
   title={On Inviscid Limits for the Stochastic Navier–Stokes Equations and Related Models},
   journal={Arch. Ration. Mech. Anal.},
   doi={10.1007/s00205-015-0841-6},
   date={2015},
}

\bib{HKP14}{article}{
   author={Hairer, Martin},
   author={Koralov, Leonid},
   author={Pajor-Gyulai, Zsolt},   
   title={From averaging to homogenization in cellular flows-an exact description of the transition},
   journal = {ArXiv e-prints},
   eprint = {1407.0982},
   date={2014},
}

\bib{K80}{article}{
   author={Kifer, Yuri},
   title={On the principal eigenvalue in a singular perturbation problem with hyperbolic limit points and circles},
   journal={J. Differential Equations},
   volume={37},
   date={1980},
   pages={108--139},
}

\bib{K88}{book}{
   author={Kifer, Yuri},
   title={Random perturbations of dynamical systems},
   publisher={Birkh\"auser Boston, Inc., Boston, MA},
   date={1988},
}

\bib{K90}{article}{
   author={Kifer, Yuri},
   title={Principal eigenvalues, topological pressure, and stochastic stability of equilibrium states},
   journal={Israel J. Math.},
   volume={70},
   date={1990},
   pages={1--47},
}

\bib{K91}{article}{
   author={Kifer, Yuri},
   title={Random perturbations of dynamical systems: a new approach},
   book={
      series={Lectures in Appl. Math.},
      publisher={Amer. Math. Soc., Providence, RI},
   },
   date={1991},
}

\bib{K53}{article}{
   author={Kolmogorov, A. N.},
   title={On dynamical systems with an integral invariant on the torus},
   journal={Doklady Akad. Nauk SSSR (N.S.)},
   volume={93},
   date={1953},
}

\bib{KvN32}{article}{
   author={Koopman, B.O.},
   author={von Neumann, J.},
   title={Dynamical Systems of Continuous Spectra},
   journal={Proc. Natl. Acad. Sci. USA},
   volume={18},
   date={1932},
   pages={255--263},
}

\bib{K02}{book}{
   author={Krylov, N. V.},
   title={Introduction to the theory of random processes},
   publisher={American Mathematical Society, Providence, RI},
   date={2002},
}

\bib{K04}{article}{
   author={Kuksin, Sergei B.},
   title={The Eulerian limit for 2D statistical hydrodynamics},
   journal={J. Statist. Phys.},
   volume={115},
   date={2004},
   pages={469--492},
}

\bib{K10}{article}{
   author={Kuksin, Sergei B.},
   title={Damped-driven KdV and effective equations for long-time behaviour of its solutions},
   journal={Geom. Funct. Anal.},
   volume={20},
   date={2010},
   pages={1431--1463},
}

\bib{KS04}{article}{
   author={Kuksin, Sergei},
   author={Shirikyan, Armen},
   title={Randomly forced CGL equation: stationary measures and the inviscid limit},
   journal={J. Phys. A},
   volume={37},
   date={2004},
   pages={3805--3822},
}

\bib{KS12}{book}{
   author={Kuksin, Sergei},
   author={Shirikyan, Armen},
   title={Mathematics of two-dimensional turbulence},
   publisher={Cambridge University Press, Cambridge},
   date={2012},
}

\bib{IN15}{article}{
   author={Iyer, Gautam},
   author={Novikov, Alexei},
   title={Anomalous diffusion in fast cellular flows at intermediate time scales},
   journal={Probab. Theory Related Fields},
   doi={10.1007/s00440-015-0617-9},
   date={2015},
}

\bib{LTD11}{article}{
   author={Lin, Zhi},
   author={Thiffeault, Jean-Luc},
   author={Doering, Charles R.},
   title={Optimal stirring strategies for passive scalar mixing},
   journal={J. Fluid Mech.},
   volume={675},
   date={2011},
   pages={465--476},
}

\bib{L69}{book}{
   author={Lions, J.-L.},
   title={Quelques m\'ethodes de r\'esolution des probl\`emes aux limites non lin\'eaires},
   publisher={Dunod; Gauthier-Villars, Paris},
   date={1969},
}

\bib{LM72}{book}{
   author={Lions, J.-L.},
   author={Magenes, E.},
   title={Non-homogeneous boundary value problems and applications. Vol. I},
   publisher={Springer-Verlag, New York-Heidelberg},
   date={1972},
}

\bib{MP14}{article}{
   author={Mattingly, Jonathan C.},
   author={Pardoux, Etienne},
   title={Invariant measure selection by noise. An example},
   journal={Discrete Contin. Dyn. Syst.},
   volume={34},
   date={2014},
   pages={4223--4257},
}

\bib{RS80-1}{book}{
   author={Reed, Michael},
   author={Simon, Barry},
   title={Methods of modern mathematical physics. I},
   publisher={Academic Press, Inc. [Harcourt Brace Jovanovich, Publishers], New York},
   date={1980},
}

\bib{RS79-3}{book}{
   author={Reed, Michael},
   author={Simon, Barry},
   title={Methods of modern mathematical physics. III},
   publisher={Academic Press [Harcourt Brace Jovanovich, Publishers], New York-London},
   date={1979},
}

\bib{RY83}{article}{
  title={How rapidly is a passive scalar mixed within closed streamlines?},
  author={Rhines, P.B.}, 
  author={Young, W.R.},
  journal={J. Fluid Mech.},
  volume={133},
  date={1983},
  pages={133--145},
}

\bib{S13}{article}{
   author={Seis, Christian},
   title={Maximal mixing by incompressible fluid flows},
   journal={Nonlinearity},
   volume={26},
   date={2013},
   pages={3279--3289},
}

\bib{S67}{article}{
   author={{\v{S}}klover, M. D.},
   title={Classical dynamical systems on the torus with continuous spectrum},
   journal={Izv. Vys\v s. U\v cebn. Zaved. Matematika},
   volume={1967},
   date={1967},
   pages={113--124},
}

\bib{Y80}{book}{
   author={Yosida, K{\^o}saku},
   title={Functional analysis},
   volume={123},
   publisher={Springer-Verlag, Berlin-New York},
   date={1980},
}

\bib{Zlatos2010}{article}{
   author={Zlato{\v{s}}, A.},
   title={Diffusion in fluid flow: dissipation enhancement by flows in 2{D}},
   journal = {Comm. Partial Differential Equations},
   volume = {35},
   date = {2010},
   pages = {496--534},
}

\end{biblist}
\end{bibdiv}

\end{document}